\theoremstyle{plain}
\newtheorem{thm}{Theorem}[section]
\newtheorem{lem}[thm]{Lemma}
\newtheorem{cor}[thm]{Corollary}
\theoremstyle{definition}
\newtheorem{defn}[thm]{Definition}
\newtheorem{example}[thm]{Example}
\theoremstyle{remark}
\newtheorem*{rmk}{Remark}
\numberwithin{equation}{section}
\newcommand{\bN}{\mathbb{N}}
\newcommand{\bZ}{\mathbb{Z}}
\newcommand{\cA}{\mathcal{A}}
\newcommand{\fP}{\mathfrak{P}}
\newcommand{\fC}{\mathfrak{C}}
\newcommand{\fQ}{\mathfrak{Q}}
\newcommand{\Sym}{\mathcal{S}}
\newcommand{\affSn}{\overline{\Sym_n}}
\newcommand{\Sinf}{\Sym_\infty}
\newcommand{\extSn}{\widetilde{\Sym_n}}
\newcommand{\extS}[1]{\widetilde{\Sym_{#1}}}
\newcommand{\offset}{\vv{\mathbf{s}}}
\newcommand{\shift}{\bm{\omega}}
\newcommand{\tsc}{{\underline{c}}}
\newcommand{\lc}{\Gamma}
\newcommand{\evac}{\textnormal{evac}}
\newcommand{\pr}{\textnormal{pr}}
\newcommand{\rw}{\textnormal{rw}}
\newcommand{\fw}{\textnormal{fw}}
\newcommand{\bk}{\textnormal{bk}}
\newcommand{\st}{\mathfrak{st}}
\newcommand{\SYT}{\textnormal{SYT}}
\newcommand{\RSYT}{\textnormal{RSYT}}
\newcommand{\SSYT}{\textnormal{SSYT}}
\newcommand{\rot}{\mathcal{R}}
\newcommand{\crho}{\tensor[_\bullet]{\rho}{}}
\DeclareMathOperator{\sh}{\textnormal{sh}}
\DeclareMathOperator{\RSK}{\textnormal{RSK}}
\DeclareMathOperator{\sgn}{\textnormal{sgn}}
\DeclareMathOperator{\im}{\textnormal{im}}
\DeclareMathOperator{\lch}{\textnormal{lch}}
\DeclareMathOperator{\con}{\mathord{{}^\frown}}
\newcommand{\su}[1]{{\mid}_{#1}}
\newcommand{\br}[1]{\left\langle{#1}\right\rangle}
\newcommand{\tbr}[1]{\mathord{\left\langle{#1}\right\rangle}}
\newcommand{\ceil}[1]{\left\lceil{#1}\right\rceil}
\title{Sign insertion and Kazhdan-Lusztig cells of affine symmetric groups}
\author{Dongkwan Kim} 
\address{School of Mathematics\\
  University of Minnesota Twin Cities\\
  Minneapolis, MN 55455\\
  U.S.A.}
\email{kim00657@umn.edu}
\author{Pavlo Pylyavskyy}
\address{School of Mathematics\\
  University of Minnesota Twin Cities\\
  Minneapolis, MN 55455\\
  U.S.A.}
\email{ppylyavs@umn.edu}
\date{\today}							
\begin{document}
\begin{abstract}  Combinatorics of Kazhdan-Lusztig cells in affine type $A$ was originally developed by Lusztig, Shi, and Xi. Building on their work, Chmutov, Pylyavskyy, and Yudovina introduced the affine matrix-ball construction (abbreviated AMBC) which gives an analog of Robinson-Schensted correspondence for affine symmetric groups. An alternative approach to Kazhdan-Lusztig theory in affine type $A$ was developed by Blasiak in his work on catabolism. He introduced sign insertion algorithm and conjectured that if one fixes the two-sided cell, the recording tableau of sign insertion process determines uniquely and is determined uniquely by the left cell. In this paper we unite these two approaches by proving Blasiak's conjecture. In the process, we show that certain new operations we introduce called partial rotations connect the elements in the intersection of a left cell and a right cell. Lastly, we investigate the connection between Blasiak's sign insertion and the standardization map acting on the set of semi-standard Young tableaux defined by Lascoux and Sch\"utzenberger.
\end{abstract}

\setcounter{tocdepth}{1}
\maketitle

\renewcommand\contentsname{}
\tableofcontents

\section{Introduction}

In their groundbreaking paper \cite{kl79}, Kazhdan and Lusztig developed a new approach to the representation theory of Hecke algebras. This gave birth to a whole area called Kazhdan-Lusztig theory. Of particular importance in this theory are the objects called cells. Their definition can be summarized as follows. Each Hecke algebra is associated with a Coxeter group $W$. Kazhdan and Lusztig define a pre-order $\leq_L$ on elements of $W$. Some pairs $v, w$ of elements of $W$ satisfy both $v \leq_L w$ and
$w \leq_L v$, in which case they are said to be left-equivalent, denoted $v \sim_L w$. Similarly one can define right equivalence $\sim_R$. The respective equivalence classes are called the left cells and the right cells.

In (finite) type $A$, i.e. when $W$ is the symmetric group, the Kazhdan-Lusztig
cell structure is understood in terms of so-called Robinson-Schensted correspondence. This is a bijective correspondence between elements of the symmetric group and pairs $(P, Q)$ of standard Young tableaux of the same shape. It is well known \cite{kl79, bv82, gm88, ari00} that two permutations lie in the same left (resp. right) cell if and only if they have the same recording tableau $Q$ (resp. insertion tableau $P$).

In affine type $A$, i.e. when $W$ is an affine symmetric group, Chmutov,
Pylyavskyy, and Yudovina \cite{cpy18} constructed a bijection $W \rightarrow \Omega$, where $\Omega$ is the set of triples $(P, Q, \rho)$ such that $P$ and $Q$ are tabloids of the same shape and $\rho$ is an integer vector satisfying certain inequalities. This bijection is called the affine matrix-ball construction, abbreviated AMBC. They show that, parallel to the finite type, fixing the tabloid $Q$ gives all affine permutations in a left cell, while fixing the tabloid $P$ gives all affine permutations in a right cell. Further properties of AMBC were developed by Chmutov-Lewis-Pylyavskyy \cite{clp17}, Chmutov-Frieden-Kim-Lewis-Yudovina \cite{cfkly}, and Kim-Pylyavskyy \cite{kp19}. 

In his work on catabolism and Garsia-Procesi modules Blasiak \cite{bla11} developed an alternative algorithm to determine the left cell to that an affine permutation belongs. He called his algorithm sign insertion. However, some of the key properties of the algorithm remained open. In particular it was in fact Blasiak's conjecture that once one fixes a two-sided cell, recording tableaux of sign insertion algorithm are in one to one correspondence with left cells. In this paper we prove this conjecture using AMBC, thus relating the two existing approaches to Kazhdan-Lusztig combinatorics in affine type $A$. 

Structure of Kazhdan-Lusztig cells can be encoded by $W$-graphs, introduced in the original work of Kazhdan and Lusztig \cite{kl79}. The edges of the $W$-graphs fall into two categories: directed and undirected. The undirected edges are the star operations, and are generally easier to understand than the directed edges. In type $A$ they coincide with Knuth moves on permutations, and they are known to connect each Kazhdan-Lusztig cell into a single connected component. In affine type $A$, however, these star operations are not sufficient to connect the whole cell. The connected components in that case were studied in \cite{clp17}. 

The main tool we use in the proof of Blasiak's conjecture are the new operations called partial rotations and inverse partial rotations. We show that, under certain assumption, they preserve both the left and the right cell of an affine permutation, and thus the only piece of data they change is the weight $\rho$ in terms of AMBC. We also prove that (inverse) partial rotations turn the intersection of a left cell and a right cell into a single connected component. We believe that this makes partial rotations to have independent interest for future study of Kazhdan-Lusztig combinatorics in affine type $A$. 

Lastly, we discuss Blasiak's sign insertion algorithm with the standardization map of Lascoux and Sch\"utzenberger \cite{ls81, las91}. The latter is a function which maps a semistandard Young tableau to another tableau of the same shape but of different content. This function is known to preserve the cocharge statistic of semistandard Young tableaux. More precisely, the set of semistandard Young tableaux of fixed content is equipped with a poset structure whose grading is given by cocharge statistic. Then the function of Lascoux and Sch\"utzenberger yields an graded poset embedding between such posets of different content.

In this paper, we describe how Blasiak's sign insertion can be understood in terms of the standardization map of Lascoux and Sch\"utzenberger. The validity of Blasiak's conjecture states that with each recording tableau of sign insertion algorithm one may associate corresponding row-standard Young tableaux which parametrize corresponding left cells. As these cells are in different two-sided cells, these tableaux should have different shapes. Then our theorem states that the images of (the reading words of) such tableaux under RSK correspondence are closely related in terms of the aforementioned standardization map. (See Theorem \ref{thm:lsstd} for the exact statement.)

The paper proceeds as follows. In Section \ref{sec:not} we introduce notation we use when working with affine symmetric groups. In Section \ref{sec:cells} we remind the reader the key properties of Kazhdan-Lusztig cells and AMBC. In Section \ref{sec:main} we describe Blasiak's sign insertion and state our main theorem. In Section \ref{sec:PPR} we introduce partial rotations, and show how they interact with AMBC and sign insertion. In Section \ref{sec:proof} we prove Blasiak's conjecture. We also prove that partial rotations  connect elements in the intersection of a left cell and a right cell. Finally in Section \ref{sec:LS} we show how Blasiak's sign insertion is related to the standardization map of Lascoux and Sch\"utzenberger.

\section{Notations and Definitions} \label{sec:not}

For a set $A$, we write $\#A$ to be its cardinal. For $a, b \in \bZ$, we set $[a,b] = \{x \in \bZ \mid a\leq x \leq b\}$. A word is a finite sequence of integers, e.g. $w=(w_1, w_2, \ldots, w_k)$ for $k \in \bN=\{0,1,2,\ldots\}$ and $w_1, \ldots, w_k \in \bZ$. For such a word we also write $w=w_1\cdots w_k$. When $k=0$, we also write $w=\emptyset$. For two words $x$ and $y$, we write $x\con y$ to be their concatenation.

We say that $\alpha$ is a composition of $n$, denoted $\alpha \models n$, if $\alpha=(a_1, \ldots, a_k)$ for some $k, a_1, \ldots, a_k \in \bN=\{0, 1, 2, \ldots\}$ where $\sum_{i=1}^k a_i = n$. Here $k$ is said to be the length of $\alpha$, also denoted by $l(\alpha)$. Unless otherwise specified, the length of $\alpha$ is the smallest integer such that $a_{l(\alpha)}\neq 0$. However, sometimes we allow it to have a zero tail at the end. A composition $\alpha =(a_1, \ldots, a_k) \models n$ is called a partition of $n$, denoted $\alpha \vdash n$, if $a_1\geq \cdots \geq a_k>0$. For a composition $\alpha =(a_1, \ldots, a_k)\models n$, we set $\alpha^{rev} = \{a_{l(\alpha)}, \ldots, a_1\}$, which is also a composition of $n$.

A tabloid is a sequence $T=(T_1, \ldots, T_k)$ where each $T_i$ is a sequence of positive integers. We also use its Young diagram to depict $T$ where the $i$-th row is equal to $T_i$. We define its shape $\sh(T)$ to be the composition given by the lengths of parts of $T$. When $\sh(T)$ is a partition we say that $T$ is a tableau. We define the length of the tabloid $T$ to be that of its shape, usually denoted by $l(T)$. For a tabloid $T$, its reading word, denoted $\rw(T)$, is defined to be the concatenation of its rows from bottom to top. For example, if $T=((3,6,7,9),(4,8,10),(1,5),(2))$ then $\rw(T) = (2,1,5,4,8,10,3,6,7,9)$. The content of a tabloid $T$ is a composition, say $\alpha=(a_1, a_2, \ldots)$, where each $a_i$ is the number of letters in $T$ equal to $i$.

For a composition $\alpha=(a_1, \ldots, a_k) \models n$, we write $\RSYT(\alpha)$ to be the set of row-standard Young tabloid of shape $\alpha$, i.e. the set of tabloids $T=(T_1, \ldots, T_k)$ where each element of $[1, n]$ appears exactly once in $T$ and each $T_i$ is an increasing sequence of length $a_i$. Also we let $\RSYT(n) = \bigsqcup_{\lambda \vdash n} \RSYT(\lambda)$, the set of row-standard Young tableaux of size $n$ (only of partition shape). For a composition $\alpha=(\alpha_1, \alpha_2, \ldots) \models n$ and a partition $\lambda \vdash n$, we denote by $\SSYT(\lambda, \alpha)$ the set of semistandard Young tableaux of shape $\lambda$ and content $\alpha$. We also set $\SSYT(n, \alpha) = \bigsqcup_{\lambda \vdash n} \SSYT(\lambda, \alpha)$. When $\alpha=(1^n)$ we write $\SYT(\lambda)$ and $\SYT(n)$ instead of  $\SSYT(\lambda, (1^n))$ and $\SSYT(n, (1^n))$, called the set of standard Young tableaux.

For $n \in \bZ_{>0}$, we set $\extSn$ to be the extended affine symmetric group defined by
\[\extSn = \{ w \colon \bZ \rightarrow \bZ \mid w \textnormal{ is bijective, } w(i+n)=w(i)+n \textnormal{ for } i\in \bZ\}.\]
For $ w\in \extSn$, its window notation is defined to be $[w(1), \ldots, w(n)]$. We often identify $w$ with its window notation. We define the affine symmetric group $\affSn$ to be
\[\affSn = \{ w \in \extSn \mid w(1)+\cdots+w(n) = n(n+1)/2\},\]
which is a subgroup of $\extSn$. We set $s_i = [1, \ldots, i-1, i+1, i, i+2, \ldots, n]$ for $i \in [1,n-1]$ and $s_0=s_n = [0, 2, \ldots, n-1, n+1]$. Then $(\affSn, \{s_1, \ldots, s_{n-1}, s_n=s_0\})$ is an affine Weyl group of type $A$. Moreover, if we set $\shift=[2,3, \ldots, n+1] \in \extSn$ then $\br{\shift}$ is an infinite cyclic group and $\extSn = \br{\shift} \ltimes \affSn$. Here $\shift$ is called the shift element.

For $n \in \bZ_{>0}$, we say that $w$ is a (affine) partial permutation if there exist $X \subset [1,n]$ such that $w$ is an injection from $X+n\bZ$ to $\bZ$ satisfying $w(i+n) = w(i)+n$ for any $i \in X+n\bZ$. Note that $X=[1,n]$ if and only if $w \in \extSn$. We identify $w$ with its window notation $[w(1), \ldots, w(n)]$ where we adopt the convention that  $w(i) = \emptyset$ whenever $i \not \in X$. We often identify a partial permutation $w: X+n\bZ \rightarrow \bZ$ for $X \subset [1,n]$ with its graph $\{(x, w(x)) \mid x \in X+n\bZ\}$. Thus for example, for two partial permutations $u:X+n\bZ \rightarrow \bZ$ and $v: X'+n\bZ \rightarrow \bZ$, we write $u \subset v$ if $X \subset X'$ and $u(x) = v(x)$ for $x \in X$. In addition, we consider the graphs to be placed on the $xy$-plane such that the $x$-axis directs to the south and the $y$-axis directs to the east, i.e. it is obtained from the $xy$-plane with the conventional direction by rotating $90^\circ$ clockwise.

For $n \in \bZ_{>0}$, let $\rot: \bZ \rightarrow \bZ$ be an involution defined by $\rot(x) = n+1-x$. This also induces an involution on the set of partial permutations defined by $w\mapsto \rot \circ w\circ \rot$, which we again denote by $\rot$. Pictorially, it corresponds to rotating the diagram of $w$ by 180 degrees that preserves the square $[1,n] \times [1,n]$. 

For a sequence $A= (A_1, \ldots, A_r)$ and a set $X=\{x_1, \ldots, x_s\}\subset [1,r]$ so that $x_1<x_2<\cdots<x_s$, we set $A_X=(A_{x_1}, \ldots, A_{x_s})$. If $X=[1,s]$ (resp. $X=[s,r]$), we also write $A_{\leq s}$ (resp. $A_{\geq s}$) instead of $A_X$. We set $A\su{X} =(\emptyset, A_{x_1}, \emptyset, \ldots, \emptyset,A_{x_s},\emptyset)$ (i.e replacing $A_i$ with $\emptyset$ if $i \not\in X$). When $w \in \extSn$ and $X \subset [1,n]$ we set $w\su{X}= (w(1), \ldots, w(n))\su{X}$, i.e. we identify $w$ with its window notation and discard $w(i)$ for $i \not\in X$. Note that this becomes a window notation of some partial permutation; we abuse notation and also denote it by $w\su{X}$.

\section{Cells, star operations, and AMBC}\label{sec:cells}
Kazhdan and Lusztig \cite{kl79} introduced (two-sided, left, and right) \emph{cells} for each Coxeter group and defined (right and left) star operations. (The notion of cells is extended to extended affine Weyl groups by Lusztig \cite{lus89:cell}.) Here we focus on the case of (extended) affine symmetric groups in terms of the affine matrix-ball construction.

For a row-standard Young tableau $T$ and $i\in [1, l(T)-1]$ suppose that $T_i=(a_1, \ldots, a_s)$ and $T_{i+1} = (b_1, \ldots, b_t)$. Then  we define $\lch_i(T)$, called the local charge in row $i$ of $T$, to be the smallest $d\in \bN$ such that $a_{l-d}<b_{l}$ for $l\in [d+1, t]$.\footnote{One can easily show that this definition coincides with \cite[Definition 5.3]{clp17}.} Pictorially, this corresponds to how much one needs to shift $T_i$ to the right so that $(T_i, T_{i+1})$ becomes standard. For example, if $T_i = (3,5,7,8)$ and $T_{i+1} = (1,2,4,6)$ then we have $\lch_i(T) = 2$ as depicted below.
\[\ytableaushort{3578,1246} \quad \Rightarrow\quad \ytableaushort{{\none}{\none}3578,1246}
\]

For $P, Q \in \RSYT(\lambda)$ where $\lambda=(\lambda_1, \ldots, \lambda_l)$ is a partition, we define $\offset_{P,Q}=(s_1, \ldots, s_l) \in \bZ^l$, called the symmetrized offset constant of $(P,Q)$, to be as follows.
\[s_i = 
\left\{\begin{aligned}
&0& \textnormal{ if } i=1 \textnormal{ or } \lambda_{i-1}>\lambda_{i},
\\&s_{i-1}+\lch_{i-1}(P)-\lch_{i-1}(Q) & \textnormal{ otherwise}.
\end{aligned}\right.
\]
In other words, we have $s_i-s_{i-1} = \lch_{i-1}(P)-\lch_{i-1}(Q)$ whenever $\lambda_{i-1}=\lambda_i$. 
We say that $\vv{\rho}=(\rho_1, \ldots, \rho_l)$ is dominant with respect to $(P,Q)$ if $\rho_{i-1}-s_{i-1}\leq \rho_{i}-s_{i}$ whenever $\lambda_{i-1}=\lambda_i$.

The affine matrix-ball construction (abbreviated AMBC), defined in \cite{cpy18}, is a systematic method to understand the combinatorial properties of $\extSn$ and yields two functions 
\begin{gather*}
\Phi: \extSn \rightarrow \bigsqcup_{\lambda \vdash n} \RSYT(\lambda) \times \RSYT(\lambda) \times \bZ^{l(\lambda)} \ \ \textnormal{ and } \ \ 
\Psi: \bigsqcup_{\lambda \vdash n} \RSYT(\lambda) \times \RSYT(\lambda) \times \bZ^{l(\lambda)} \rightarrow \extSn.
\end{gather*}
Here, we have $\im \Phi = \{ (P, Q, \vv{\rho})  \mid \vv{\rho} \textnormal{ is dominant with respect to } (P,Q)\}$ and  $\im \Psi = \extSn$. Moreover, $\Psi|_{\im \Phi}$ and $\Phi$ are inverse to each other. (Their constructions are explained in \ref{sec:AMBC} in more detail.)

The two-sided cells of $\extSn$ are parametrized by partitions of $n$ and we denote such cells by $\tsc_\lambda$ for $\lambda \vdash n$. The left/right cells contained in $\tsc_\lambda$ are parametrized by row-standard Young tableaux of shape $\lambda$, and we write $\lc_T$ (resp. $(\lc_T)^{-1}$) for $T\in \RSYT(\lambda)$ for such a left (resp. right) cell. Here, we have $w \in \lc_T$ if and only if $\Phi(w) = (P, T, \vv{\rho})$ for some $P$ and $\vv{\rho}$, and similarly  $w \in (\lc_T)^{-1}$ if and only if $\Phi(w) = (T, Q, \vv{\rho})$ for some $Q$ and $\vv{\rho}$. Also we have $\tsc_\lambda = \bigsqcup_{T \in \RSYT(\lambda)} \lc_T= \bigsqcup_{T \in \RSYT(\lambda)} (\lc_T)^{-1}$.

For $w\in \extSn$ and $i \in [1,n]$ such that either $w(i)<w(i+2)<w(i+1)$, $w(i+1)<w(i+2)<w(i)$, $w(i)<w(i-1)<w(i+1)$, or $w(i+1)<w(i-1)<w(i)$, we define the right star operation $w \mapsto w^*$ for $* \sim i$ to be the one obtained from $w$ by swapping $w(i+kn)$ and $w(i+1+kn)$ for each $k \in \bZ$. Similarly, we define the left star operation $w \mapsto {}^*w$ for $* \sim i$ to be the composition $w \mapsto w^{-1} \mapsto (w^{-1})^* \mapsto ((w^{-1})^*)^{-1}$ when the corresponding right star operation $w^{-1} \mapsto (w^{-1})^*$ for $* \sim i$ is well-defined. Then \cite[Proposition 5.10]{kp19} states that when $\Phi(w) = (P, Q, \vv{\rho})=(P, Q,\offset_{P, Q}+ \vv{\crho})$ we have
\begin{align*}
\Phi({}^*w) &=\left\{
\begin{aligned}
&(P^*, Q, \offset_{P^*,Q}+\vv{\crho}) & \textnormal{ if } *\not\sim n,
\\&(P^*, Q, \offset_{P^*,Q}+\vv{\crho}-\vv{\delta}(P,1)+\vv{\delta}(P,n)) & \textnormal{ if } *\sim n.
\end{aligned}\right.
\\\Phi(w^*) &=\left\{
\begin{aligned}
&(P, Q^*, \offset_{P,Q^*}+\vv{\crho}) & \textnormal{ if } *\not\sim n,
\\&(P, Q^*, \offset_{P,Q^*}+\vv{\crho}+\vv{\delta}(Q,1)-\vv{\delta}(Q,n)) & \textnormal{ if } *\sim n.
\end{aligned}\right.
\end{align*}
Here, for $T \in \RSYT(\lambda)$ and $s \in [1,n]$ such that $s \in T_a$, we define $\vv{\delta}(T,s)=(\delta_1, \ldots, \delta_l)$ to be
\[ \delta_i = \left\{
\begin{aligned} &1 & \textnormal{ if } \lambda_{a-1}>\lambda_a=\lambda_i   \textnormal{ (here we set $\lambda_{0}=\infty$), and}
\\ & 0 & \textnormal{ otherwise}.
\end{aligned}
\right.
\]
(In particular, $\delta_i=\delta_j$ whenever $\lambda_i=\lambda_j$.)

Multiplication by $\shift=[2,3,\ldots, n+1] \in \extSn$ preserves each two-sided cell but permutes left/right cells. More precisely, when $\Phi(w)=(P, Q, \vv{\rho}) = (P, Q, \offset_{P,Q}+\vv{\crho})$ we have (see \cite[Proposition 5.5]{kp19})
\begin{align*}
\Phi(\shift\cdot w) &= (\shift(P), Q, \offset_{\shift(P),Q}+\vv{\crho}+\vv{\delta}(P,n)), \textnormal{ and}
\\\Phi(w\cdot\shift^{-1}) &= (P, \shift(Q), \offset_{P,\shift(Q)}+\vv{\crho}-\vv{\delta}(Q,n)).
\end{align*}
Here, $\shift(T)$ is a tableau obtained from $T$ by changing each entry $i$ with $i+1$ (and reordering entries in each row if necessary).

\begin{example} Suppose that $w=[1, 6, 8, 14, 17, 5, 0, 19, 3, 22]\in \extS{10}$. Then we have
\begin{align*}
{}^*w&=[1, 6, 8, 14, 17, 5, 0, 19, 2, 23] \textnormal{ for } *\sim 2,
\\w^*&=[12, 6, 8, 14, 17, 5, 0, 19, 3, 11] \textnormal{ for } *\sim 10,
\\\shift w&=[2, 7, 9, 15, 18, 6, 1, 20, 4, 23], 
\\w\shift^{-1}&=[12, 1, 6, 8, 14, 17, 5, 0, 19, 3].
\end{align*}
In each case we have (all the symmetrized offset constants are zero as the parts of $(4,3,2,1)$ are pairwise different)
\begin{align*}
\Phi(w) &= (((3,5,8,10), (1,4,9), (2,6), (7)), ((4,5,8,10),(1,2,3),(6,9),(7)), (4,0,0,0)),
\\\Phi({}^*w)&=(((2,5,8,10), (1,4,9), (3,6), (7)), ((4,5,8,10),(1,2,3),(6,9),(7)), (4,0,0,0)),
\\\Phi(w^*)&=(((3,5,8,10), (1,4,9), (2,6), (7)), ((1,4,5,8),(2,3,10),(6,9),(7)), (3,1,0,0)),
\\\Phi(\shift w) &= (((1,4,6,9), (2,5,10), (3,7), (8)), ((4,5,8,10),(1,2,3),(6,9),(7)), (5,0,0,0)),
\\\Phi(w\shift^{-1}) &=  (((3,5,8,10), (1,4,9), (2,6), (7)), ((1,5,6,9),(2,3,4),(7,10),(8)), (3,0,0,0))
\end{align*}
which are as expected from the observation above.
\end{example}


%

\section{Sign insertion and the main theorem} \label{sec:main}
Here we recall the sign insertion algorithm defined in \cite{bla11} and state the first main result of this paper (Theorem \ref{thm:main}). Suppose that we have words $\fP, \fQ, w$ such that $w\neq \emptyset$ and $\fP\con w$ is a window notation of some element in $\extSn$. Also we assume that $\fP$ and $\fQ$ are both increasing and of the same length.  Let us write $w=(a) \con w''$ where $a\in \bZ$ is the first entry of $w$ and $w''$ is the remaining part. For $i \in \bZ_{>0}$ bigger than any entry in $\fQ$, we define $\digamma(i-1, \fP, \fQ, w) = (i, \fP', \fQ', w')$ as follows.
\begin{enumerate}[label=\textbullet]
\item If $a$ is bigger than any entry of $\fP$, then we set $\fP'=\fP\con (a)$, $\fQ'=\fQ\con(i)$, and $w'=w''$.
\item Otherwise, let $b$ be the smallest element in $\fP$ bigger than $a$. Then we set $\fP'$ to be $\fP$ after replacing $b$ with $a$. Also we set $\fQ'=\fQ$ and $w' = w''\con (n+b)$.
\end{enumerate}
For $w \in \extSn$, we start with the quadruple $(0, \emptyset, \emptyset, w)$ and apply $\digamma$ recursively until the last component becomes the empty word, say $(i, \fP, \fQ, \emptyset)$. We write $\sgn_\fP(w) = \fP$ and $\sgn_\fQ(w) = \fQ$. The process $w \mapsto (\sgn_{\fP}(w), \sgn_{\fQ}(w))$ is called the sign insertion.

\begin{example} Let $w=[17,13,4, 20, 9, 24] \in \extS{6}$. Each step of the sign insertion applied to $w$ is described as follows.
\[
\begin{array}{|c|c|c|c|}
\hline
i&\fP & \fQ & w\\
\hline
0&\emptyset & \emptyset & 17,13, 4, 20, 9, 24\\
1&17 &1 & 13, 4, 20, 9, 24\\
2&13 & 1& 4, 20, 9, 24,23\\
3&4 & 1 & 20, 9, 24,23, 19\\
4&4,20 & 1,4 &  9, 24,23, 19\\
5&4,9 & 1,4 &   24,23, 19,26\\
6&4,9,24 & 1,4,6 &   23, 19,26\\
7&4,9,23 & 1,4,6 &   19,26,30\\
8&4,9,19 & 1,4,6 &   26,30,29\\
9&4,9,19,26 & 1,4,6,9 &  29\\
10&4,9,19,26,30 & 1,4,6,9,10 &   29\\
11&4,9,19,26,29 & 1,4,6,9,10 &   36\\
12&4,9,19,26,29,36 & 1,4,6,9,10,12 &   \emptyset\\
\hline
\end{array}
\]
Thus we have $\sgn_\fP(w) = (4,9,19,26,29,36)$ and $\sgn_\fQ(w) = (1,4,6,9,10,12)$.
\end{example}

\begin{rmk} Note that it is not exactly the same as, but equivalent to, the original definition of Blasiak \cite[Algorithm 9.7]{bla11}. Indeed, in our convention the window notation of $w\in \extSn$ is $[w(1), w(2), \ldots, w(n)]$, whereas in \cite{bla11} it is defined to be $[n+1-w^{-1}(1), n+1-w^{-1}(2), \ldots, n+1-w^{-1}(n)]$. Furthermore, our sign insertion algorithm uses the first element of $w$ in each step, whereas in \cite{bla11} the last element is used. Therefore, the Blasiak's insertion algorithm applied to $w$ is equivalent to our algorithm applied to $\rot(w^{-1})$.
\end{rmk}

Let us discuss some basic properties of sign insertion.
\begin{lem}\label{lem:signleft} Suppose that $\digamma(i-1,\fP, \fQ, w) = (i, \fP', \fQ', w')$. Then either $\fP \con w=\fP' \con w'$ or $\fP' \con w'$ is obtained from $\fP \con w$ by applying a series of right star operations followed by multiplication by $\shift$ on the right. As a result, $w$ and $\sgn_\fP(w)$ are in the same left cell for $w\in \extSn$.
\end{lem}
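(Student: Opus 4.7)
The plan has two parts: prove the structural factorization claim per $\digamma$-step, then derive the left-cell assertion by iteration. In Case 1 of $\digamma$ the equality $\fP' \con w' = \fP \con w$ is immediate. For Case 2, writing $\fP = (p_1, \ldots, p_l)$ (strictly increasing) with $p_k = b$ so that $p_{k-1} < a < p_k$, the words $v \coloneqq \fP \con w$ and $v' \coloneqq \fP' \con w'$ have window notations
\[
v = [p_1, \ldots, p_{k-1}, b, p_{k+1}, \ldots, p_l, a, w''_1, \ldots, w''_{n-l-1}],
\]
\[
v' = [p_1, \ldots, p_{k-1}, a, p_{k+1}, \ldots, p_l, w''_1, \ldots, w''_{n-l-1}, n+b].
\]
I would then claim $v' = v \cdot \tau \cdot \shift$ with $\tau = (s_l s_{l-1} \cdots s_{k+1}) \cdot (s_{k-1} s_{k-2} \cdots s_1) \in \affSn$, empty products read as the identity; this is a direct position-by-position check.

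The heart of the proof is to show that each factor of $\tau$, applied as a right multiplication at the appropriate intermediate state, realizes a valid right star operation. First, applying $s_l, s_{l-1}, \ldots, s_{k+1}$ in order bubbles $a$ leftward from position $l+1$ to position $k+1$: at step $s_j$ the triple $(w(j-1), w(j), w(j+1))$ of the current state is $(p_{j-1}, p_j, a)$ for $j \in [k+2, l]$ or $(b, p_{k+1}, a)$ for $j = k+1$, both satisfying the star pattern $w(j+1) < w(j-1) < w(j)$ by the strict monotonicity of $\fP$ and the inequality $a < b = p_k$. Next, applying $s_{k-1}, s_{k-2}, \ldots, s_1$ bubbles $b$ leftward from position $k$ to position $1$: at step $s_j$ the triple $(w(j), w(j+1), w(j+2))$ is $(p_{k-1}, b, a)$ for $j = k-1$ or $(p_j, b, p_{j+1})$ for $j \in [1, k-2]$, both satisfying $w(j) < w(j+2) < w(j+1)$ via $p_{k-1} < a < b$ or $p_j < p_{j+1} < p_k = b$ respectively. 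The final right multiplication by $\shift$ then produces $v'$.

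For the ``as a result'' corollary, iterate: $\sgn_\fP(w)$ is obtained from the initial $w$ via finitely many Case 1 (trivial) and Case 2 (right stars followed by right $\shift$) transformations. Using the formulas for $\Phi(w^*)$ and $\Phi(w \shift^{-1})$ from Section \ref{sec:cells} to track the $(P, Q, \vv{\rho})$-triple through each operation, one concludes that $w$ and $\sgn_\fP(w)$ carry the same left-cell label, so lie in the same left cell. The main obstacle I anticipate is the index bookkeeping in the degenerate cases $k = 1$, $k = l$, and $k = l - 1$, where one phase of $\tau$ collapses or shrinks, together with verifying the star-pattern inequalities at each intermediate state without off-by-one errors.
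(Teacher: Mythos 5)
Your proof is correct and takes essentially the same route as the paper's: Case 2 is the one-row bumping of $a$ into $\fP$, realized by right star operations that produce the intermediate word $(b)\con\fP'\con w''$, after which right multiplication by $\shift$ sends $b$ to the back of the window as $b+n$. The only difference is that you write out the explicit word $\tau=(s_l\cdots s_{k+1})(s_{k-1}\cdots s_1)$ and check the star pattern at every intermediate step (including the degenerate cases where a phase is empty), which the paper's proof declares ``clear''; your verification is accurate.
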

\begin{proof} Suppose that $\fP \con w\neq\fP' \con w'$. If we let $w= (a)\con w''$, then $\fP'$ is obtained from $\fP$ by the ``bumping process'', i.e. regarding $\fP$ as an one-row tableau and inserting $a$ to it using the usual Robinson-Schensted insertion algorithm. Thus if we let $b$ be the element bumped out from $\fP$, then it is clear that $(b)\con \fP'$ is obtained from $\fP \con (a)$ by a series of right star operations. Now the result follows as $((b)\con\fP'\con w'')\cdot \shift=\fP' \con w'' \con (b+n) = \fP' \con w'$.
\end{proof}

\begin{lem}\label{lem:sgnqpre}Suppose that $w\in \extSn$.
\begin{enumerate} 
\item $\sgn_\fQ(w) = \sgn_\fQ(\shift w)$.
\item If ${}^*w$ is well-defined for some $*\sim i$, then $\sgn_\fQ({}^*w) = \sgn_\fQ(w)$.
\end{enumerate}
\end{lem}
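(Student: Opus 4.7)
The plan for both parts is to run the sign insertion procedure on $w$ and on its transform in parallel, inducting on the step number.

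For part~(1), let $w' = \shift w$, whose window equals that of $w$ with every entry increased by $1$. I plan to maintain the invariant that at step $k$, the state $(\fP_k, Q_k)$ for $w'$ equals that for $w$ with every value in $\fP_k$ and $Q_k$ shifted by $+1$, while the recordings $\fQ_k$ coincide. This propagates because a uniform shift by $1$ preserves the comparison $a \gtrless \max \fP$ and the identity of the smallest element of $\fP$ exceeding $a$, so both the add-versus-bump decision and the bumped element shift consistently; likewise, the re-queued $b + n$ becomes $(b+1) + n$. Both algorithms terminate simultaneously once $|\fP| = n$, yielding $\sgn_\fQ(\shift w) = \sgn_\fQ(w)$.

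For part~(2), set $v = {}^*w = s_i w$; the window of $v$ is obtained from $w$'s by applying $s_i$ entrywise, swapping values $i + kn \leftrightarrow i + 1 + kn$ for all $k \in \bZ$. The plan is to show that the add-versus-bump decision agrees at every step of the parallel runs, which immediately implies $\sgn_\fQ({}^*w) = \sgn_\fQ(w)$. The most natural candidate invariant is $\fP_v^k = s_i(\fP_w^k)$ (as sets), $Q_v^k = s_i(Q_w^k)$, $\fQ_v^k = \fQ_w^k$; this works through the initial steps, as the bump rule $b \mapsto b + n$ is $s_i$-equivariant. However, it is fragile: as soon as $\fP_w$ develops a ``collision'' containing both $i + \alpha n$ and $i+1+\alpha n$ for some $\alpha$, small examples show that the residue correspondence between $\fP_v^k$ and $\fP_w^k$ can shift to another simple reflection such as $s_{i \pm 1}$, and thereafter the two states need not be related by any single simple reflection.

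The main obstacle is thus to identify a weaker inductive invariant that survives such shifts and still forces matching decisions. The approach I would adopt combines the parallel run with Lemma~\ref{lem:signleft}: each step of the insertion on $w$ factors as a sequence of right star operations on $\fP \con w$ followed by right multiplication by $\shift$, and since $v = s_i w$ arises from $w$ by a left multiplication (which commutes with all right multiplications), the right-star-op/shift sequences for $w$ and $v$ can be correlated step by step, albeit possibly using different simple reflections. The decisive input is the star operation hypothesis that $w^{-1}(i+2)$ or $w^{-1}(i-1)$ lies strictly between $w^{-1}(i)$ and $w^{-1}(i+1)$. A careful case analysis on (a) the residue modulo $n$ of the currently processed value, (b) which of residues $i, i+1 \pmod n$ currently appear in $\fP_w^k$, and (c) which of the four star conditions on $w^{-1}$ holds, should confirm in every case that $v$'s add-versus-bump decision matches $w$'s, even when the bumped elements or $\fP$-states themselves differ. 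Once the case analysis is complete, $\sgn_\fQ({}^*w) = \sgn_\fQ(w)$ follows immediately.
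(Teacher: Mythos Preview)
Your argument for part~(1) is correct and complete: the window of $\shift w$ is the window of $w$ with every entry increased by~$1$, and a uniform additive shift of all values commutes with every step of sign insertion, so the recording sequence $\fQ$ is unchanged.

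For part~(2), however, what you have written is an outline, not a proof. You correctly identify the difficulty: the naive invariant $\fP_v^k = s_i(\fP_w^k)$ breaks as soon as $\fP_w$ contains two adjacent values $i+\alpha n$ and $i+1+\alpha n$, because applying $s_i$ can then change which element is ``the smallest entry of $\fP$ exceeding $a$.'' But your proposed remedy --- that the correspondence shifts to some other simple reflection $s_{i\pm1}$, and that a case analysis on residues and on which of the four star conditions holds will sort everything out --- is asserted, not carried out. That case analysis \emph{is} the proof; without it you have not shown that the add-versus-bump decisions match, and the phrase ``should confirm'' signals that you have not checked it. The idea of using Lemma~\ref{lem:signleft} to rewrite each insertion step as right star operations followed by $\shift$, and then exploiting that left multiplication by $s_i$ commutes with right multiplication, is promising, but you still owe the verification that each right star operation valid for $\fP_w\con w$ remains a valid right star operation (at the same position) for $s_i\cdot(\fP_w\con w)$, which is exactly where the star hypothesis on $w^{-1}$ must enter.

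For comparison, the paper does not give a self-contained argument either: it simply cites \cite[Proposition~9.9(h),(i)]{bla11} and remarks that Blasiak's argument extends to the case $*\sim n$. So your part~(1) already goes further than the paper does, and your plan for part~(2) is along the right lines; but to count as a proof you must actually execute the case analysis you describe.
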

\begin{proof} This is a reformulation of \cite[Proposition 9.9 (h), (i)]{bla11} with respect to our convention. In \cite{bla11} he only considers when $*\not\sim n$ (see the comment right above \cite[Example 9.3]{bla11}), however the argument still applies when $*\sim n$.
\end{proof}

Now we state the first main result of this paper, originally conjectured by J. Blasiak.
\begin{thm} \label{thm:main}
\cite[Conjecture 9.17(b)]{bla11} For $\lambda \vdash n$ and $\fQ$, the set $\{w \in \tsc_\lambda \mid \sgn_\fQ(w)=\fQ\}$ is a single left cell (if nonempty).
\end{thm}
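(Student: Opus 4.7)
The strategy is to establish the two inclusions of the claimed equality. For the inclusion that each left cell in $\tsc_\lambda$ is contained in a single $\sgn_\fQ$-level set, I would exhibit a family of operations that preserves both the left cell and $\sgn_\fQ$ and that connects any two elements of a given left cell. Three such classes are available: (i) left star operations, which preserve $\sgn_\fQ$ by Lemma~\ref{lem:sgnqpre}(2) and preserve the left cell because $\Phi({}^*w)$ retains $Q$; (ii) left multiplication by $\shift$, which preserves $\sgn_\fQ$ by Lemma~\ref{lem:sgnqpre}(1) and likewise preserves the left cell; and (iii) the partial rotations to be introduced in Section~\ref{sec:PPR}, which by construction preserve both the left and right cells and, as will be established there, also $\sgn_\fQ$. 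The partial rotations supply connectivity inside each intersection $\lc_T \cap (\lc_{T'})^{-1}$, while the left star operations and left $\shift$-multiplications traverse between distinct right cells inside a fixed left cell $\lc_T$; together they should close the left cell into a single $\sgn_\fQ$-equivalence class.

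For the reverse inclusion, the previous step yields a well-defined map from left cells of $\tsc_\lambda$ to admissible values of $\sgn_\fQ$, and it remains to prove injectivity. My plan is to exploit Lemma~\ref{lem:signleft}: since $\sgn_\fP(w)$ lies in the same left cell as $w$, given $w, w' \in \tsc_\lambda$ with $\sgn_\fQ(w) = \sgn_\fQ(w')$ one may reduce to comparing the left cells of canonical representatives supplied by $\sgn_\fP$. The pair $(\sgn_\fP(w), \sgn_\fQ(w))$ encodes a canonical form on the left cell, and together with the shape constraint imposed by $w \in \tsc_\lambda$ it should determine enough of the AMBC recording tableau $Q$ to force $w$ and $w'$ into the same left cell. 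An alternative route is a counting argument: if the set of admissible $\sgn_\fQ$ values on $\tsc_\lambda$ can be shown to have cardinality $|\RSYT(\lambda)|$ --- matching the number of left cells --- then the already-established first inclusion combined with this equality forces injectivity automatically.

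The main obstacle is the connectivity argument in the first inclusion. In affine type $A$ the classical left-cell-preserving moves of left star operations and left $\shift$-multiplication are known to be insufficient to connect a full left cell, and this is precisely why partial rotations must be introduced. Establishing that partial rotations preserve $\sgn_\fQ$, and then assembling the three families of operations into a transitive system on each left cell, is where the bulk of the technical work lies; this is the substance of Sections~\ref{sec:PPR} and~\ref{sec:proof}, after which Theorem~\ref{thm:main} should follow in a largely formal fashion.
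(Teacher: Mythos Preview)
Your high-level bipartition into the two inclusions matches the paper, and your use of Lemma~\ref{lem:sgnqpre} to handle left star operations and left $\shift$-multiplication is correct. However, both halves of your strategy contain a genuine gap.

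For the first inclusion, your connectivity plan does not close. The \emph{proper} partial rotations of Section~\ref{sec:PPR}---the only ones shown to preserve $\sgn_\fQ$ (Theorem~\ref{thm:ppr})---act only on the top block of $\vv{\rho}$: by Theorem~\ref{thm:rho} they change $\rho_r$ for some $r\le m$, where $m$ is the multiplicity of $\lambda_1$ in $\lambda$. They cannot reach the coordinates $\rho_{m+1},\ldots,\rho_{l(\lambda)}$, so proper PR/IPRs alone do not connect $(\lc_P)^{-1}\cap\lc_Q$. (The connectivity theorem you allude to, proved in Section~\ref{sec:proof} as a by-product, uses \emph{general} partial rotations along arbitrary streams, for which no $\sgn_\fQ$-invariance is established.) The paper sidesteps this by a different mechanism: after reducing to common $P$ via Lemma~\ref{lem:sgnqpre}, it uses proper PR/IPRs only to push $\rho_1,\ldots,\rho_m$ into an asymptotic regime, then invokes Lemma~\ref{lem:asympAMBC} and Lemma~\ref{lem:asym} to strip off the top block and reduce to a smaller affine symmetric group by \emph{induction on $n$}. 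The inductive/asymptotic step is what handles the deeper coordinates, and it is not a connectivity argument.

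For the second inclusion, your proposal does not isolate a workable mechanism. The paper's argument is not a direct comparison of AMBC data from $(\sgn_\fP,\sgn_\fQ)$, nor a counting argument. Instead it constructs, for each $T\in\RSYT(\lambda)$, a specific representative $\tilde{w}_{T,N}\in\lc_T$ (with $N\gg0$) engineered so that $\sgn_\fP(\tilde{w}_{T,N})$ is \emph{independent of $T$}. Injectivity of $w\mapsto(\sgn_\fP(w),\sgn_\fQ(w))$ then forces $\sgn_\fQ(\tilde{w}_{T,N})\neq\sgn_\fQ(\tilde{w}_{S,N})$ whenever $T\neq S$. Your counting alternative would require independently computing $\#\{\sgn_\fQ(w):w\in\tsc_\lambda\}$, which is not available before the theorem is proved; and your first alternative (``should determine enough of the AMBC recording tableau $Q$'') is not a plan, since $\sgn_\fP(w)$ lies in the same left cell as $w$ but gives no direct handle on $w'$ when only $\sgn_\fQ(w)=\sgn_\fQ(w')$ is assumed.
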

\begin{rmk} Since his $\sgn_Q(w)$ is equal to our $\sgn_\fQ(\rot(w^{-1}))$, his conjecture is translated to that $\{w \in \tsc_\lambda \mid \sgn_\fQ(\rot((w^{-1})^{-1}))=\sgn_\fQ(\rot(w))=\fQ\}$ is a single left cell. However, since $\rot: \extSn \rightarrow \extSn$ sends a left cell to a left cell (see \cite[Theorem 3.1]{cfkly} for more detail), these two statements are equivalent.
\end{rmk}

\section{AMBC, proper partial rotations, and sign insertion}\label{sec:PPR}
Here we first review the construction of AMBC in detail. Then we introduce \emph{proper partial rotations} which play a central role in our paper and discuss how they are related to both AMBC and Blasiak's sign insertion. Eventually we verify the Diamond Lemma (Lemma \ref{lem:diamond}) which is a key step for the proof of the Blasiak's conjecture.

\subsection{Notions for AMBC} \label{sec:AMBC}
Here we recap some definitions from \cite{cpy18} that will be used in this paper. We fix a positive integer $n$.

For a partial permutation $w$, we define its Shi poset to be the poset $P=P_w$ on $\{x \in [1,n] \mid w(x)\neq \emptyset\}$ such that $i <_{P} j$ if either [$i>j$ and $w(i)<w(j)$] or [$w(j)>w(i)+n$]. For such a poset, we define its Greene-Kleitman partition to be $\lambda=(\lambda_1, \lambda_2, \ldots)$ where $\sum_{i=1}^k\lambda_i$ is the maximum of the number of elements in the union of $k$-antichains in $P_w$. Then by \cite{shi91}, if $w\in \extSn$ is an actual permutation then the Greene-Kleitman partition $\lambda$ of $P_w$ parametrizes the two-sided cell that contains $w$, i.e. we have $w \in \tsc_\lambda$. In such a case we define the width of $P_w$ to be the first row of $\lambda$, which is also equal to the maximum length of an antichain in $P_w$.

\begin{example}\label{ex:shi}
Let $w=[8,1,19, 14,16, 2, 25, 13, 10, 27] \in \extS{10}$. Its Shi poset $P_w$ is described in Figure \ref{fig:hasse}. Direct calculation shows that its Greene-Kleitman partition is $(3, 3, 3, 1)$, which means that $w \in \tsc_{(3,3,3,1)}$.
\begin{figure}[!htbp]
\begin{tikzpicture}
	\tikzmath{\x=6;}
	\draw[] (0,0) circle (\x pt) node (2) {2};
	\draw[] (1,0) circle (\x pt) node (6) {6};
	\draw[] (2,0) circle (\x pt) node (9){9};
	\draw[] (0.5,1) circle (\x pt) node (1){1};
	\draw[] (1.5,1) circle (\x pt) node (8){8};
	\draw[] (0.5,2) circle (\x pt) node (4){4};
	\draw[] (1.5,2) circle (\x pt) node (5){5};
	\draw[] (0,3) circle (\x pt) node (3){3};
	\draw[] (1,3) circle (\x pt) node (7){7};
	\draw[] (2,3) circle (\x pt) node (10){10};	
        \draw[-latex] (9) -- (8);
        \draw[-latex] (6) -- (8);
        \draw[-latex] (6) -- (1);
        \draw[-latex] (2) -- (8);
        \draw[-latex] (2) -- (1);
        \draw[-latex] (8) -- (4);
        \draw[-latex] (8) -- (5);
        \draw[-latex] (4) -- (3);
        \draw[-latex] (4) -- (7);
        \draw[-latex] (4) -- (10);
        \draw[-latex] (5) -- (3);
        \draw[-latex] (5) -- (10);
        \draw[-latex] (1) -- (3);
        \draw[-latex] (1) -- (7);
        \draw[-latex] (1) -- (10);
\end{tikzpicture}
\caption{Hasse diagram of $P_{[8,1,19, 14,16, 2, 25, 13, 10, 27]}$}
\label{fig:hasse}
\end{figure}
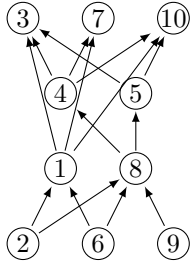
\end{example}

We say that a partial permutation $S: X +n\bZ \rightarrow \bZ$ for some $X\subset [1,n]$ is a stream if $S$ is an increasing function. In such a case its density is defined to be $\#X$. When there exists another partial permutation $w$ such that $S \subset w$, we say that $S$ is a channel of $w$ if the density of $S$ is equal to the width of $P_w$. Indeed, a stream $S \subset w$ is a channel if and only if its density is maximum among the streams contained in $w$. For a stream $S:X +n\bZ \rightarrow \bZ$ its altitude $a(S)$ is defined to be $\sum_{x\in X} (\ceil{w(x)/n}-1)$ where $\ceil{t}$ is the smallest integer not smaller than $t$.

For $(x,y),(x',y') \in \bZ^2$ we write $(x,y) \leq_{SW} (x',y')$ if $x\geq x'$ and $y\leq y'$ and say that $(x,y)$ is southwest of $(x',y')$ or equivalently $(x',y')$ is northeast of $(x,y)$. Similarly, we write $(x,y)\leq_{NW}(x',y')$ if $x\leq x'$ and $y\leq y'$ and say that $(x,y)$ is northwest of $(x',y')$ or equivalently $(x',y')$ is southeast of $(x,y)$. (Note that this is compatible with our convention of the $xy$-plane.) For two channels $C, C' \subset w$ we say that $C$ is southwest of $C'$, denoted $C \leq_{SW} C'$, if for any $b\in C$ there exists $b'\in C'$ such that $b\leq_{SW}b'$. If neither $C\leq_{SW} C'$ nor $C\geq_{SW} C'$ then we write $C \not\sim_{SW} C'$. By \cite[Proposition 3.13]{cpy18}, for any partial permutation $w$ there always exist channels $C, C' \subset w$ such that $C\leq_{SW} C'' \leq_{SW} C'$ for any channel $C'' \subset w$. We call $C$ (resp. $C'$) the southwest (resp. northeast) channel of $w$.

For a partial permutation $w$, a sequence $((x_1, w(x_1)), \ldots, (x_k, w(x_k)))$ is called a path in $w$ if $x_1<\cdots <x_k$ and $w(x_1)<\cdots <w(x_k)$, i.e. it is a chain with respect to the southeast direction. Or equivalently, if $(b_1, \ldots, b_k)$ then we have $b_1 \leq_{NW} \cdots \leq_{NW}b_k$. \footnote{This is called a reverse path in \cite{cpy18} and a path is defined to be of the opposite direction. Here we stick to our definition in order to avoid confusion to which way a path is directed.} A numbering $d:w \rightarrow \bZ$ is called monotone if for any path $(b_1, b_2, \ldots, b_k)$ we have $d(b_1)<d(b_2)<\cdots<d(b_k)$.

For a stream $S$ we say that $\tilde{d}: S \rightarrow \bZ$ is a proper numbering if $\tilde{d}(x, S(x))<\tilde{d}(y, S(y))$ whenever $x<y$ and $\tilde{d}(x+n, S(x+n))-\tilde{d}(x, S(x))$ equals the density of $S$ for any $x$. Note that a proper numbering is unique up to shift. When $C$ is a channel of $w$ and $\tilde{d}:C \rightarrow \bZ$ is a proper numbering, we define the channel numbering $d^C_w=d^C: w \rightarrow \bZ$ to be 
\[d^C(b) = \max \{\tilde{d}(b')+k \mid \textnormal{there exists a path } (b'=b_0, b_1, \ldots, b_k=b) \textnormal{ in } w \textnormal{ where } b' \in C\}.\]
Again $d^C$ is uniquely determined up to shift. When $C$ is the southwest channel of $w$ we also write $d=d^C$ (or $d_w=d_w^C$), called the southwest channel numbering of $w$.

For two channels $C, C' \subset w$, we fix shifts of $d^{C}$ and $d^{C'}$ so that they coincide on some/any element in $C$. Then the distance between $C$ and $C'$ is defined to be $h(C, C') = |d^{C'}(b)-d^{C}(b)|$ for some/any $b \in C'$. Then this is well-defined and satisfies that $h(C, C')+h(C', C'') \geq h(C, C'')$ for any channels $C, C', C'' \subset w$. Furthermore, if $C\not\sim_{SW} C'$ then $h(C, C')=0$ by \cite[Lemma 11.12]{cpy18}. (The converse is not true in general.) We say that $R$ is a river of $w$ if it is of the form $R=\{C'\subset w \mid C' \textnormal{ is a channel of } w, h(C, C')=0\}$ for some channel $C \subset w$.

\begin{example}\label{ex:chnum} Let $w=[8,1,19, 14,16, 2, 25, 13, 10, 27] \in \extS{10}$ as in Example \ref{ex:shi}. There are three channels of $w$ as follows:
\begin{align*}
C_1 = [\emptyset, 1, \emptyset ,\emptyset,\emptyset,2,\emptyset,\emptyset,10,\emptyset], 
\quad C_2 =[8, \emptyset, \emptyset, 14, 16, \emptyset, \emptyset,\emptyset, \emptyset, \emptyset],
\quad C_3 =[\emptyset, \emptyset, 19, \emptyset, \emptyset, \emptyset, 25, \emptyset, \emptyset, 27].
\end{align*}
Here $C_1$ is the southwest channel of $w$ and $C_3$ is the northeast channel of $w$. Let us fix the proper numbering $\tilde{d}$ of $C_1$ such that $\tilde{d}(2, 1) = 0$. Then the southwest channel numbering $d=d^{C_1}: w \rightarrow \bZ$ is defined to be
\begin{gather*}
(1,8), (2,1) \mapsto 0, \qquad (4, 14), (6,2) \mapsto 1, \qquad (3,19), (5,16), (8,13), (9,10) \mapsto 2,
\\ (7,25) \mapsto 3, \qquad (10, 27) \mapsto 4.
\end{gather*}
Similarly, $d^{C_2}:w \rightarrow \bZ$ is defined to be
\begin{gather*}
(2, 1) \mapsto 0, \qquad
(1, 8), (6, 2) \mapsto 1,\qquad
(4, 14), (8, 13),(9, 10) \mapsto 2
\\ (5, 16), (3, 19) \mapsto 3,\qquad
(7, 25) \mapsto 4,\qquad
(10, 27) \mapsto 5,
\end{gather*}
and $d^{C_3}:w \rightarrow \bZ$ is defined to be
\begin{gather*}
(2, 1) \mapsto 0, \qquad
(1, 8), (6, 2) \mapsto 1,\qquad
(4, 14), (8, 13),(9, 10) \mapsto 2
\\ (5, 16) \mapsto 3,\qquad
(3, 19) \mapsto 4,\qquad
  (7, 25) \mapsto 5, \qquad 
  (10, 27) \mapsto 6.
\end{gather*}
Therefore we see that $h(C_1, C_2) = 1$, $h(C_2, C_3) = 1$, and $h(C_1, C_3)=2$.
\end{example}

For a partial permutation $w$ and its channel $C \subset w$, we define $\fw_C(w)$ and $\st_C(w)$ as follows. Let $d=d^C_w: w \rightarrow \bZ$ be the channel numbering defined above. For each $m\in \bZ$, we take $x_1, \ldots, x_k\in \bZ$ such that $x_1>\cdots>x_k$ and $d(x_i, w(x_i))=m$ for $i \in [1,k]$. Then we have $w(x_1)<\cdots <w(x_k)$. (It follows from the monotone property of $d$.) Set $Z_m=Z_m'\sqcup Z_m'' \sqcup Z_m'''$ where
\[
Z_m'= \{ (x_i, w(x_i)) \mid i \in [1,k]\}, \qquad Z_m'' = \{ (x_i, w(x_{i+1})) \mid i \in [1,k-1]\}, \qquad Z_m''' = \{ (x_k, w(x_{1}))\}.
\]
The set $Z_m$ is called the zigzag labeled $m$. Then $w \cap Z_m = Z_m'$. We set $\fw_C(w) = \bigsqcup_{m\in \bZ}Z_m''$ and $\st(w) = \bigsqcup_{m\in \bZ}Z_m'''$. Then $\fw_C(w)$ is a partial permutation and $\st_C(w)$ is a stream whose density is equal to the width of $P_w$. When $C$ is the southwest channel of $w$ we simply write $\fw(w)$ and $\st(w)$ instead of $\fw_C(w)$ and $\st_C(w)$.

\begin{example} Let $w=[8,1,19, 14,16, 2, 25, 13, 10, 27] \in \extS{10}$ as in Example \ref{ex:shi} and \ref{ex:chnum}. Then with respect to $d=d^{C_1}$ we have
\begin{align*}
Z_0'&=\{(2,1),(1,8),  (-3, 15) \}, &&Z_0'' =\{(2,8), (1, 15)\}, &&Z_0''' = \{(-3, 1)\},
\\Z_1'&=\{(6,2), (4,14), (0, 17) \}, &&Z_1'' =\{(6,14), (4,17)\}, &&Z_1''' = \{(0, 2)\},
\\Z_2'&=\{(9,10), (8,13), (5, 16), (3,19) \}, &&Z_2'' =\{(9,13), (8,16), (5,19)\}, &&Z_2''' = \{(3, 10)\}.
\end{align*}
Also we have $\fw(w) = (Z_0''\sqcup Z_1''\sqcup Z_2'') +(n,n)\bZ$ and $\st(w) = (Z_0'''\sqcup Z_1'''\sqcup Z_2''') +(n,n)\bZ$.
\end{example}

The map $\Phi: \extSn \rightarrow \bigsqcup_{\lambda \vdash n} \RSYT(\lambda) \times \RSYT(\lambda) \times \bZ^{l(\lambda)}$ is defined as follows. Starting with $w_0=w \in \extSn$, we successively calculate $w_{i+1}=\fw(w_{i})$ and $S_{i+1}=\st(w_{i})$ until we have an empty partial permutation. For each $S_{i}$ we set $P_i, Q_i \subset [1,n]$ to be such that $S_i : P_i +n\bZ \rightarrow Q_i+n\bZ$ is a bijection. Then we have $\Phi(w) = (P, Q, \vv{\rho})$ where $P=(P_1, P_2,\ldots)$, $Q=(Q_1, Q_2,\ldots)$, and $\vv{\rho}=(a(S_1), a(S_2), \ldots)$. Moreover, $\vv{\rho}$ is always dominant with respect to $(P,Q)$.

A stream $S$ is said to be compatible with a partial permutation $w$ if $S\cap w =\emptyset$, $S \cup w$ is still a partial permutation, and the density of $S$ is not (strictly) smaller than the width of $P_w$. In such a case we define the backward numbering $d^{\bk,S}_w=d^{\bk,S}: w \rightarrow \bZ$ as follows. First we fix a proper numbering $\tilde{d}:S \rightarrow \bZ$, and for $(x, w(x)) \in w$ we let $d(x, w(x))=\max\{\tilde{d}(y, S(y)) \in S \mid y<x, S(y)<w(x)\}$. Now we repeat the following process:
\begin{enumerate}[label=\textbullet]
\item If $d(x, w(x))<d(y, w(y))$ for any $x, y$ such that $x<y$ and $w(x)<w(y)$, then we terminate the process.
\item Otherwise, choose $(x, w(x))$ such that:
\begin{enumerate}[label=-]
\item there exists $y$ such that $d(x,w(x))\geq d(y,w(y))$, $x<y$, and $w(x)<w(y)$, and
\item for any $z$ such that $z<x$ and $w(z)<w(x)$ we have $d(z,w(z))<d(x,w(x))$.
\end{enumerate}
\item For each $i \in \bZ$ we lower the value of $d(x+in, w(x)+in)$ by 1.
\end{enumerate}
After this process is finished we set $d^{\bk, S}_w=d$. This numbering is always well-defined.

For a partial permutation $w$ and a compatible stream $S$, we define $\bk_S(w)$ as follows. Let $\tilde{d}$ be the proper numbering on $S$ and $d=d^{\bk,S}_w$ be the induced backward numbering on $w$. Then for each $m\in \bZ$, we take $y, x_1, \ldots, x_k \in \bZ$ such that $x_1>\cdots >x_k$ and $\tilde{d}(y, S(y)) = d(x_i, w(x_i))=m$ for $i \in [1,k]$. Then we have $w(x_1)<\cdots <w(x_k)$. (It follows from the monotone property of $d$.) Set $Z_m = Z_m'\sqcup Z_m''\sqcup Z_m'''$ where
\begin{gather*}
Z_m'= \{ (x_{i+1}, w(x_i)) \mid i \in [1,k-1]\}\sqcup\{(x_1, S(y))\}\sqcup\{(y, w(x_k))\},
\\ Z_m'' = \{ (x_i, w(x_{i})) \mid i \in [1,k]\}, \qquad Z_m''' = \{ (y, S(y))\}.
\end{gather*}
The set $Z_m$ is called the zigzag labeled $m$. Then $w \cap Z_m = Z_m''$ and $S \cap Z_m = Z_m'''$. We set $\bk_S(w) = \bigsqcup_{m \in \bZ} Z_m'$. Then $\bk_S(w)$ is a partial permutation the width of whose Shi poset is equal to the density of $S$.

The map $\Psi: \bigsqcup_{\lambda \vdash n} \RSYT(\lambda) \times \RSYT(\lambda) \times \bZ^{l(\lambda)} \rightarrow \extSn$ is defined as follows. For $P=(P_1, \ldots, P_l)$, $Q=(Q_1, \ldots, Q_l)$, and $\vv{\rho} = (\rho_1, \ldots, \rho_l)$, we define $S_i$ to be the unique stream which maps $P_i+n\bZ$ to $Q_i+n\bZ$ and which satisfies $a(S_i)=\rho_i$. Now starting with the empty partial permutation $w_l$ we successively define $w_{i-1}=\bk_{S_i}(w_i)$ for $i \in [1,l]$. This process is well-defined and we set $\Psi(P, Q, \vv{\rho}) = w_0$. (Here the dominance of $\vv{\rho}$ is not required.)

\subsection{Channels and rivers} \label{sec:chan}
Let us describe some properties of channels and rivers. We start with observing the following lemma which strengthens \cite[Proposition 3.17]{cpy18}.
\begin{lem} \label{lem:addh} Suppose that $C_1, C_2, C_3$ are three (not necessarily disjoint) channels of a partial permutation $w$. Then we have $h(C_1, C_3)\leq h(C_1, C_2)+h(C_2, C_3)$. Furthermore, if $h(C_1, C_2)=0$, $h(C_2, C_3)=0$, or $C_1 \leq_{SW} C_2 \leq_{SW}C_3$, then we have $h(C_1, C_2)+h(C_2,C_3) = h(C_1, C_3)$.
\end{lem}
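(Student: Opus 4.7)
The triangle inequality $h(C_1, C_3) \leq h(C_1, C_2) + h(C_2, C_3)$ has already been recorded in Section~\ref{sec:AMBC} (it is \cite[Proposition~3.17]{cpy18}), so the real content of the lemma is the three equality conditions.

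The two cases $h(C_1, C_2) = 0$ and $h(C_2, C_3) = 0$ are purely formal consequences of the triangle inequality applied in both directions. For instance, if $h(C_1, C_2) = 0$ then
\[ h(C_2, C_3) \leq h(C_2, C_1) + h(C_1, C_3) = h(C_1, C_3) \leq h(C_1, C_2) + h(C_2, C_3) = h(C_2, C_3), \]
so all three intermediate expressions collapse to $h(C_2, C_3)$, yielding $h(C_1, C_3) = h(C_1, C_2) + h(C_2, C_3)$; the case $h(C_2, C_3) = 0$ is symmetric.

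The substantive case is $C_1 \leq_{SW} C_2 \leq_{SW} C_3$. My plan starts from the following structural observation: for any two channels $C, C'$ of $w$ and any third channel $C''$, the restriction $(d^{C'} - d^{C})|_{C''}$ is constant. Indeed, both $d^C|_{C''}$ and $d^{C'}|_{C''}$ are strictly increasing along the NE chain $C''$ and shift by the width of $P_w$ when one translates by $(n,n)$, hence each is a proper numbering of the stream $C''$; and on any stream two proper numberings differ by a single constant. Fix shifts so that $d^{C_1}, d^{C_2}, d^{C_3}$ all agree on $C_1$. I would then reduce the equality assertion to the following monotonicity statement: whenever $C \leq_{SW} C'$, the constant $(d^{C'} - d^{C})|_{C''}$ equals $h(C, C') \geq 0$ for every channel $C'' \geq_{SW} C'$. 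Granting this, on $C_3$ we have $d^{C_2}(b) - d^{C_1}(b) = h(C_1, C_2)$ and $d^{C_3}(b) - d^{C_2}(b) = h(C_2, C_3)$, both nonnegative; summing gives $d^{C_3}(b) - d^{C_1}(b) = h(C_1, C_2) + h(C_2, C_3)$, and this quantity equals $h(C_1, C_3)$ by definition.

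The monotonicity statement is the step I expect to require the most care. The plan is to prove it by a path-concatenation argument using the max-over-paths definition of $d^C$: for $b \in C''$ with $C'' \geq_{SW} C'$, the $\leq_{SW}$ relation guarantees elements of $C'$ lying NW of $b$, allowing one to split a realizing path from $C$ to $b$ into a long leg from $C$ to $C'$ and a short leg from $C'$ to $b$, and to pin the length of the long leg in terms of $h(C, C')$. This geometric interleaving of paths across nested channels, together with the reverse inequality coming from the triangle inequality itself, should close the argument; it is the only place where the SW order is used beyond bookkeeping with the cocycle identity $(d^{C_3} - d^{C_1}) = (d^{C_3} - d^{C_2}) + (d^{C_2} - d^{C_1})$.
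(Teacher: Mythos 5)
The paper itself gives no direct argument here: the proof of Lemma~\ref{lem:addh} just cites the proof of \cite[Proposition 3.17]{cpy18}. So your proposal, a genuine proof from scratch, is a different route, and most of it is sound. The reduction of the two zero cases to two applications of the triangle inequality is correct, and your structural observation that $(d^{C'}-d^C)|_{C''}$ is constant on any third channel $C''$ is both correct and the right tool: each restriction really is a proper numbering of $C''$ (strictly increasing along the stream, periodic with period equal to the width), and two proper numberings of a stream differ by a constant.

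There is, however, a gap in the $C_1\leq_{SW}C_2\leq_{SW}C_3$ case, and it is a normalization mismatch rather than a wrinkle in the path argument. You fix one global normalization, making $d^{C_1},d^{C_2},d^{C_3}$ all agree on $C_1$, and then apply the monotonicity claim twice: once to $(C,C')=(C_1,C_2)$ and once to $(C,C')=(C_2,C_3)$, both with $C''=C_3$. The first application is fine: the natural normalization built into the statement (``agree on $C$'') is exactly agree-on-$C_1$. The second application, however, yields $(d^{C_3}-d^{C_2})|_{C_3}=h(C_2,C_3)$ only under the agree-on-$C_2$ normalization for the pair $(d^{C_2},d^{C_3})$, and nothing you have written guarantees that this coincides with the agree-on-$C_1$ normalization you fixed. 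Concretely, write $\beta=(d^{C_3}-d^{C_2})|_{C_2}$ under the agree-on-$C_1$ normalization; then what the cocycle identity actually gives is $h(C_1,C_3)=h(C_1,C_2)+h(C_2,C_3)+\beta$, and your argument is silent on why $\beta=0$. What is missing is the symmetric half of the monotonicity claim: whenever $C\leq_{SW}C'$ (normalized to agree on $C$), the constant $(d^{C'}-d^C)|_{C''}$ should equal $0$ for every $C''\leq_{SW}C$, not only $h(C,C')$ for $C''\geq_{SW}C'$. Applied to $(C,C',C'')=(C_2,C_3,C_1)$ this gives $(d^{C_3}-d^{C_2})|_{C_1}=0$, which is precisely the statement that the agree-on-$C_1$ and agree-on-$C_2$ shifts of $d^{C_3}$ relative to $d^{C_2}$ coincide, i.e.\ $\beta=0$. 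The path-concatenation idea you sketch can presumably be made to yield this symmetric half too (it is essentially the same statement read from the southwest side), but as written the monotonicity claim you propose to prove is strictly weaker than what the deduction needs, and the argument does not close without it.
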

\begin{proof} It is indeed shown in the proof of \cite[Proposition 3.17]{cpy18}.
\end{proof}

For a partial permutation of $w$ whose Shi poset is of width $r$, we successively choose $C_i$ to be the southwest channel of $w-\sqcup_{j=1}^{i-1} C_j$ until we fail to find a stream of density $r$ and let $\{C_1, \ldots, C_m\}$ be its result. (This is always possible due to \cite[Proposition 3.13]{cpy18}.) Then $(C_1, \ldots, C_m)$ is a disjoint collection of channels of $w$ where $C_1 \leq_{SW} \cdots \leq_{SW} C_m$. By \cite[Definition 3.7]{cpy18} and the argument thereafter, we see that $m$ equals the multiplicity of $r$ (the width of $P_w$) in the Greene-Kleitman partition of $P_w$. By Lemma \ref{lem:addh}, in our sitution there exist $0=m_0,m_1, m_2, \ldots, m_l=m$ where $0<m_1<m_2<\ldots<m_{l-1}<m$ such that $h(C_i, C_j)=0$ if and only if $i,j \in [1+m_{k-1}, m_{k}]$ for some $k \in [0,l-1]$. In other words, there are $l$ rivers $R_1, \ldots, R_l \subset w$ such that $C_i \in R_k$ for $k \in [1,l]$ and $i\in [1+m_{k-1},m_{k}]$. Then, we may replace $C_{m_k}$ for each $k \in [1,l]$ with the northeast channel of $R_k$ without violating the condition $C_1\leq_{SW}\cdots \leq_{SW} C_m$ and the pairwise disjoint property.

\begin{defn} For a partial permutation $w$, we set $\fC_w=(C_1, \ldots, C_m)$ to be the sequence of the channels of $w$ such that (1) $m$ is the multiplicity of $\lambda_1$ in the Greene-Kleitman partition $\lambda$ of $P_w$, (2) $C_1 \leq_{SW} C_2 \leq_{SW} \cdots \leq_{SW} C_m$, (3) $C_i$'s are pairwise disjoint, and (4) this sequence contains all the northeast and southwest channels of rivers of $w$.
\end{defn}

\begin{example}\label{ex:fCw} Suppose that $w=[6,1,18,3,19,24,12,15,17,10]$. Then there are four channels
\begin{align*}
C_1=&\ [\emptyset, 1,\emptyset, 3, \emptyset,\emptyset,\emptyset,\emptyset,\emptyset,10],
\\C_2'=&\ [6,\emptyset,\emptyset,\emptyset,\emptyset,\emptyset,12,15,\emptyset,\emptyset],
\\C_2=&\ [\emptyset,\emptyset,\emptyset,\emptyset,\emptyset,\emptyset,12,15,17,\emptyset],
\\C_3=&\ [\emptyset,\emptyset,18,\emptyset,19,24,\emptyset,\emptyset,\emptyset,\emptyset].
\end{align*}
Here $\{C_1, C_2, C_2'\}$ and $\{C_3\}$ are rivers of $w$. We take $C_2$ instead of $C_2'$ as $C_2$ is the northeast channel in its river. Thus we have $\fC_w = (C_1, C_2, C_3)$.
\end{example}

Suppose that $m=\#\fC_w \geq 2$. Then by \cite[Corollary 14.11]{cpy18}, if we set $\fC_{\fw(w)}=(D_1, \ldots, D_{m-1})$ then each $D_i$ is between $C_i$ and $C_{i+1}$. (When $\#\fC_w=1$, the width of $P_{\fw(w)}$ is strictly smaller than that of $P_w$.) If we let $\Phi(w) = (P, Q, \vv{\rho})=(P, Q, \offset_{P,Q}+\vv{\crho})$ where $\vv{\rho} = (\rho_1, \rho_2, \ldots)$, $\offset_{P,Q} = (s_1, s_2, \ldots)$, and $\vv{\crho}=(\crho_1, \crho_2, \ldots)$, then \cite[Theorem 8.1]{cpy18} shows that $h(C_1,C_2) = \rho_2-\rho_1-s_2 = \crho_2-\crho_1$ and $h(C_i,C_{i+1}) = h(D_{i-1}, D_i)$ for $i \in [2,m-1]$.

The distances between channels can be obtained from AMBC due to the following lemma.
\begin{lem} \label{lem:distch} Suppose that $\Phi(w) = (P, Q, \offset_{P,Q}+\vv{\crho})$ where $\vv{\crho}=(\crho_1, \crho_2, \ldots)$ and $\fC_w = (C_1, \ldots, C_m)$. Then for $i \in [1,m-1]$ we have $h(C_i, C_{i+1}) = \crho_{i+1} - \crho_{i}$.
\end{lem}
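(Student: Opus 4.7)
The plan is to prove the equality $h(C_i,C_{i+1})=\crho_{i+1}-\crho_i$ by induction on $m=\#\fC_w$. The base case $m=1$ is vacuous. In the inductive step the case $i=1$ is handed to us for free: the paragraph immediately before the lemma already records that $h(C_1,C_2)=\rho_2-\rho_1-s_2=\crho_2-\crho_1$. So the work is to treat $i\in[2,m-1]$, and for that we pass from $w$ to $\fw(w)$ and invoke the inductive hypothesis.

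For $i\ge 2$, the identity $h(C_i,C_{i+1})=h(D_{i-1},D_i)$ (with $\fC_{\fw(w)}=(D_1,\ldots,D_{m-1})$), also quoted from the excerpt, reduces the question to the distance between consecutive channels in positions $i-1,i$ of $\fC_{\fw(w)}$. By the construction of $\Phi$ we have $\Phi(\fw(w))=((P_2,P_3,\ldots),(Q_2,Q_3,\ldots),(\rho_2,\rho_3,\ldots))$, and $\#\fC_{\fw(w)}=m-1<m$, so the inductive hypothesis applied at index $i-1$ gives $h(D_{i-1},D_i)=\crho'_{i}-\crho'_{i-1}$, where $\vv{\crho}'$ denotes the shift-free part of the weight component of $\Phi(\fw(w))$. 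It then remains to verify $\crho'_{i}-\crho'_{i-1}=\crho_{i+1}-\crho_i$.

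Write $\offset_{P,Q}=(s_j)$ and $\offset_{(P_2,\ldots),(Q_2,\ldots)}=(s'_j)$; since $\rho'_j=\rho_{j+1}$, the desired identity reduces to $s'_{i}-s'_{i-1}=s_{i+1}-s_i$. This is the one place where one has to be careful, because the definition of $\offset$ resets to $0$ at each strict descent of the shape. However, the index $m$ is by definition the multiplicity of $\lambda_1$ in the Greene-Kleitman partition of $P_w$, so $\lambda_1=\cdots=\lambda_m$; in particular $\lambda_i=\lambda_{i+1}$ and $\lambda'_{i-1}=\lambda_i=\lambda_{i+1}=\lambda'_i$ for every $i\in[2,m-1]$. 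The defining recursion for $\offset$ therefore applies at both steps and yields $s_{i+1}-s_i=\lch_i(P)-\lch_i(Q)=\lch_{i-1}(P')-\lch_{i-1}(Q')=s'_i-s'_{i-1}$, closing the induction. The only real obstacle is this offset bookkeeping across the shift of tableau rows induced by $\fw$; everything else is a direct invocation of the results cited in the excerpt.
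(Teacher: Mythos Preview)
Your argument is correct and follows essentially the same strategy as the paper: reduce to $\fw(w)$ via the identity $h(C_i,C_{i+1})=h(D_{i-1},D_i)$ and the fact that $\Phi(\fw(w))=(P_{\geq 2},Q_{\geq 2},(\offset_{P,Q}+\vv{\crho})_{\geq 2})$, with the $i=1$ case already in hand. Your treatment of the offset bookkeeping (verifying $s'_i-s'_{i-1}=s_{i+1}-s_i$ on the constant block $\lambda_1=\cdots=\lambda_m$) is more explicit than the paper's, which simply records the shifted weight vector and leaves this check implicit.
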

\begin{proof} When $i=1$ we already observed that $h(C_1, C_2) = \crho_{2}-\crho_1$ which follows from \cite[Theorem 8.1]{cpy18}. However, the second part of aforementioned theorem states that $h(C_i,C_{i+1}) = h(D_{i-1}, D_i)$, and thus the result follows from induction on $i$ and the fact that $\Phi(\fw(w)) = (P_{\geq2}, Q_{\geq2}, (\offset_{P,Q}+\vv{\crho})_{\geq 2})$.
%
\end{proof}
In particular, the indices $0=m_0, m_1, m_2, \ldots, m_l=m$ are chosen in a way that $\crho_i=\crho_j$ if and only if $i,j \in [1+m_{k-1}, m_{k}]$ for some $k \in [1,l]$, i.e. each river of $w$ corresponds to a subset of $[1,m]$ having the same values of $\vv{\crho}$.


For a partial permutation $w$ and a compatible stream $S$, the indexing river of $\bk_S(w)$ corresponding to $(w, S)$ is defined to be the collection of channels $C \in \bk_S(w)$ such that (if we fix a shift of $d^C_{\bk_S(w)}$ properly then) $d^C_{\bk_S(w)}(b)=m$ for any $b \in \bk_S(w)$ if and only if $m \in Z'_m$ where $Z'_m$ is as in \ref{sec:AMBC} (the set of elements labeled $m$). Then the collection is indeed a river. Moreover, when $(w,S) = (\fw(v), \st(v))$ for some partial permutation $v$ then the southwest channel of $v$ is contained in the indexing river of $(w,S)$ since the forward numbering (i.e. the southwest channel numbering) and the backward numberings are compatible in this case.

For a partial permutation $w$ and a compatible stream $S$ with the proper numbering $\tilde{d}: S \rightarrow \bZ$ and $d=d^{\bk,S}_w: w \rightarrow \bZ$, we say that $(x, w(x)) \in w$ is $N$-terminal (resp. $W$-terminal) with respect to $S$ if there exists $(y, S(y))\in S$ such that $\tilde{d}(y, S(y)) = d(x,w(x))+1$ and $x<y$ (resp. $w(x)<w(y)$). Also we say that $b \in w$ is $N$-terminating (resp. $W$-terminating) with respect to $S$ if there exists a path $b=b_0, \ldots, b_k \in w$ such that $b_k$ is $N$-terminal (resp. $W$-terminal) and $d(b_{i-1})+1=d(b_{i})$ for $i \in [1,k]$. 

By \cite[Lemma 14.22]{cpy18} and \cite[Lemma 16.15]{cpy18}, if $S$ is a stream compatible with $w$ and $R$ is a river of $w$ then either every $b\in \bigcup_{C\in R}C$ is $N$-terminating or every $b\in \bigcup_{C\in R}C$ is $W$-terminating, thus we may say that the river $R$ is either $N$-terminating or $W$-terminating. Furthermore, by \cite[Corollary 16.17]{cpy18} at most one river of $w$ is both $N$- and $W$-terminating. These notions are related to the indexing river of $\bk_S(w)$ as described in the lemma below. (Also see Figure \ref{fig:zigzag}.)

\begin{lem} \label{lem:indterm}Suppose that $v$ is a partial permutation, $S$ is a stream compatible with $v$, and $w = \bk_S(v)$. Then $(x,w(x)) \in w$ is in the indexing river corresponding to $(v,S)$ if and only if in the zigzag $Z$ containing $(x,w(x))$ (in the backward step of AMBC) the elements $(y, v(y)) \in Z\cap v$ such that $x<y$ (resp. $w(x)<v(y)$) are $W$-terminating (resp. $N$-terminating.)
\end{lem}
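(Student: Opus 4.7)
The plan is to prove both directions by characterizing which elements of $w = \bk_S(v)$ can be extended to a channel $C$ whose channel numbering $d^C$ assigns label $m$ to every $b \in Z_m'$, since by definition such channels are precisely those lying in the indexing river.

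I would begin by unpacking the anatomy of the zigzag $Z = Z_m$ from the backward step. Writing the label-$m$ elements of $v$ as $(x_1, v(x_1)), \ldots, (x_k, v(x_k))$ with $x_1 > \cdots > x_k$ (so $v(x_1) < \cdots < v(x_k)$) and the label-$m$ stream element as $(y, S(y))$, the three families of elements of $Z_m' \subset w$ are: the two boundary elements $(x_1, S(y))$ and $(y, v(x_k))$ paired with $S$, and the middle elements $(x_{i+1}, v(x_i))$ for $i \in [1, k-1]$. For each $(x, w(x)) \in Z_m'$, the two sets
\[
\{(z, v(z)) \in Z \cap v : x < z\} \quad \text{and} \quad \{(z, v(z)) \in Z \cap v : w(x) < v(z)\}
\]
can be read off explicitly, and using the backward-numbering inequalities $y < x_k$ and $S(y) < v(x_1)$, they partition $Z \cap v$ cleanly, case by case along the three families above.

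For the forward implication, I would assume $(x, w(x)) \in Z_m' \cap C$ for some channel $C$ in the indexing river. Because $C$ is a chain in $\leq_{NW}$ whose $d^C$-labels match the zigzag labels, it contains a unique element of $Z_{m+1}'$ strictly $\geq_{NW} (x, w(x))$ and a unique element of $Z_{m-1}'$ strictly $\leq_{NW} (x, w(x))$. I would trace $C$ forward and backward through successive zigzags and translate the forced increases in $x$ and in $w(x)$ into monotone paths in $v$ terminating at a stream element. Explicitly, the eastward ascent of $C$ beyond $(x, w(x))$ forces the "east" $v$-elements of $Z$ to admit a monotone path in $v$ reaching an $N$-terminal element, while the southward ascent forces the "south" $v$-elements to admit one reaching a $W$-terminal element.

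For the reverse implication, I would assume the termination conditions hold and inductively build a channel $C$ through $(x, w(x))$ sitting in the indexing river. The $W$-terminating paths originating at the south $v$-elements of $Z_m$ provide candidates for the forward extension of $C$ into $Z_{m+1}'$; similarly the $N$-terminating paths give a consistent backward extension. The structural dichotomy that each river of $v$ is uniformly $N$- or $W$-terminating (\cite[Lemma 14.22, Lemma 16.15]{cpy18}) guarantees that these extensions can be made compatibly across all zigzags, and Lemma \ref{lem:distch}, together with the size $k+1$ of each $Z_m'$, shows that the resulting chain has the density of a channel and not merely of a sub-stream. I expect the main obstacle to be uniformly handling the two boundary elements $(x_1, S(y))$ and $(y, v(x_k))$, which interact directly with $S$ and behave asymmetrically from the middle ones; a secondary subtlety is establishing that the constructed chain genuinely attains channel density in $w$ rather than being a proper sub-stream.
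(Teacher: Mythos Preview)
Your proposal is a plan sketch rather than a proof, and the central mechanism is missing. The entire content of the lemma is the translation between two structures living in \emph{different} partial permutations: the indexing river lives in $w=\bk_S(v)$, while $N$/$W$-termination is a property of paths in $v$ relative to $S$. When you write ``the eastward ascent of $C$ beyond $(x,w(x))$ forces the `east' $v$-elements of $Z$ to admit a monotone path in $v$ reaching an $N$-terminal element,'' you are asserting precisely the statement to be proved, not deriving it. Likewise, in the reverse direction you say the terminating paths in $v$ ``provide candidates for the forward extension of $C$ into $Z_{m+1}'$,'' but you give no rule for converting a path in $v$ into a choice of $w$-element in the next zigzag, and no argument that the resulting chain is a channel rather than a sub-maximal stream. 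Your appeal to Lemma~\ref{lem:distch} for the density claim is misplaced: that lemma computes distances between channels of $w$ from $\vv{\crho}$, and says nothing about whether a chain you build by hand attains channel density.

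The paper's proof takes a very different and much shorter route: it does not attempt to construct channels or trace paths at all. For the ``if'' direction it simply cites \cite[Remark~16.7]{cpy18}. For the ``only if'' direction, it considers the extremal elements $(c,w(c))$ and $(c',w(c'))$ of the indexing river within the zigzag $Z$, and then invokes \cite[Lemmas~16.5 and~16.8]{cpy18} (together with their anti-diagonal reflections) to locate exactly where the most northeast $W$-terminating and most southwest $N$-terminating $v$-elements of $Z$ sit relative to $(c,w(c))$ and $(c',w(c'))$. The termination pattern for a general $(x,w(x))$ between $(c',w(c'))$ and $(c,w(c))$ then follows by sandwiching. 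In other words, the technical bridge between $v$-termination and $w$-channels that your outline leaves unbuilt is exactly what \cite[\S16]{cpy18} supplies; the paper's proof recognizes this and delegates to those results rather than redeveloping them.
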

\begin{proof} The if part follows directly from \cite[Remark 16.7]{cpy18}. For the converse, suppose that $(x,w(x))$ is in the indexing river, say $R$. Let $(c, w(c)), (c',w(c')) \in w$ be such that $c$ is the minimum among the elements in $\bigcup_{C\in R}C \cap Z$ and $c'$ is the maximum. Then we have $(c',w(c')) \leq_{SW} (x, w(x))\leq_{SW} (c,w(c))$. By \cite[Lemma 16.5, 16.8]{cpy18} and their ``reflection statements along the anti-diagonal'' (or equivalently considering $w^{-1}$ instead), if we let $(y, v(y)) \in v$ (resp. $(y', v(y')) \in v$) be the most northeast $W$-terminating element (resp. the most southwest $N$-terminating element) of $Z \cap v$ then $c<y$ and $w(c)=v(y)$ (resp. $c'=y'$ and $w(c')<v(y')$). (If no such $(y, v(y))$ exists  then $(c, w(c))=(c', w(c'))=(x, w(x))$ is the most southwest element of $Z$, and if no such $(y', v(y'))$ exists then $(c, w(c))=(c', w(c'))=(x, w(x))$ is the most northeast element of $Z$.) Thus the result follows.
\end{proof}
As a result, elements of $v$ which are northeast of the indexing river of $\bk_S(v)$ corresponding to $(v,S)$ are $N$-terminating but not $W$-terminating, and a similar statement holds for balls southwest of the indexing river.

\begin{example} Figure \ref{fig:zigzag} describes a zigzag which appears in the backward step of AMBC. Here, the striped ball is an element of $S$, the white ones are the elements of $v$, and the shaded ones are the elements of $w=\bk_S(v)$. The curvy lines denote channels in the indexing river. The elements $C=(c, w(c)), C'=(c', w(c'))$ in Lemma \ref{lem:indterm} are depicted as in the diagram, and the white balls labeled $N$ (resp. $W$, resp. $NW$) are $N$-terminating but not $W$-terminating (resp. $W$-terminating but not $N$-terminating, resp. both $N$- and $W$-terminating).
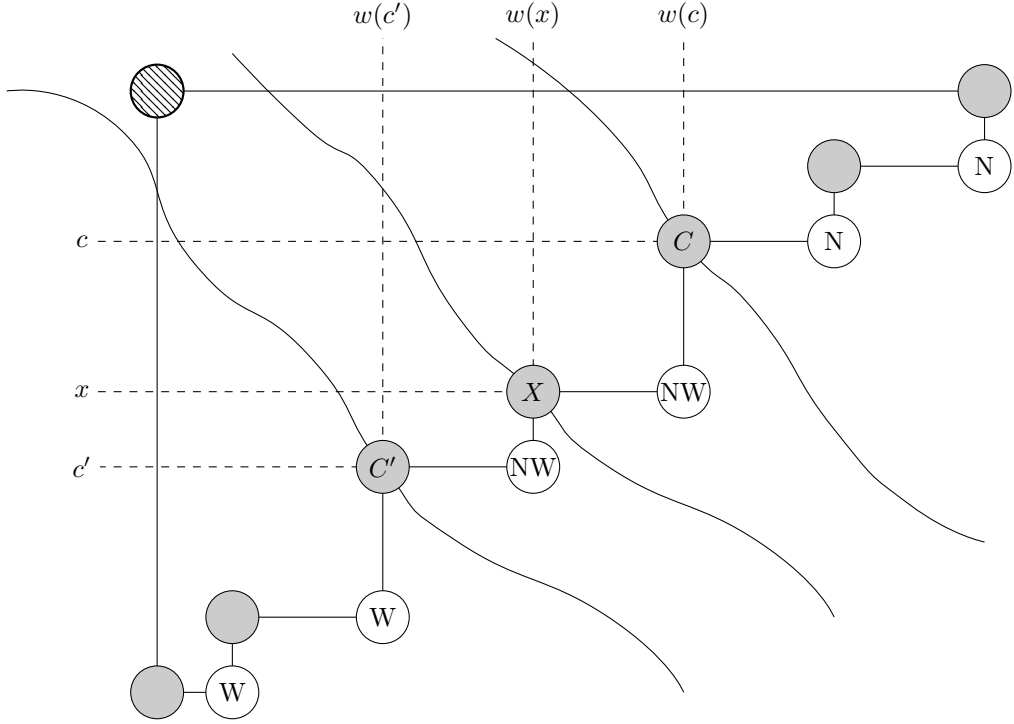
\begin{figure}[!htbp]
\begin{tikzpicture}

        \draw plot [smooth, tension=1] coordinates {(-2,0) (-.5,-.5) (0.5,-2.3) (2,-3.5) (3,-5) (4,-6) (6,-7) (7,-8)};
        \draw plot [smooth, tension=1] coordinates {(1,.5) (2,-.5) (3,-1.3) (4,-3) (5,-4) (6,-5) (8,-6) (9,-7)};
        \draw plot [smooth, tension=1] coordinates {(4.5,.7) (6,-.5) (7,-2) (8,-3) (9,-4.5) (10,-5.5) (11, -6)};
	\draw (0,0) --(0,-8) --(1,-8) --(1,-7) --(3,-7) --
		(3,-5) --(5,-5) --(5,-4) --(7,-4) --(7,-2)--
		(9,-2) --(9,-1) --(11,-1) --(11,0) -- cycle;
	\draw [dashed] (-1,-4) --(5,-4)--(5,1);
	\node [fill=white]at (-1,-4) {$x$};
	\node [fill=white]at (5,1) {$w(x)$};
	\draw [dashed] (-1,-2) --(7,-2)--(7,1);
	\node [fill=white]at (-1,-2) {$c$};
	\node [fill=white]at (7,1) {$w(c)$};
	\draw [dashed] (-1,-5) --(3,-5) -- (3, 1);
	\node [fill=white]at (-1,-5) {$c'$};
	\node [fill=white]at (3,1) {$w(c')$};
	\draw[fill=white, thick] (0,0) circle (10pt);
	\draw[pattern=north west lines] (0,0) circle (10pt);
	\draw[fill=white!80!black] (0,-8) circle (10pt);
	\draw[fill=white] (1,-8) circle (10pt) node {W};
	\draw[fill=white!80!black] (1,-7) circle (10pt);
	\draw[fill=white] (3,-7) circle (10pt) node {W};
	\draw[fill=white!80!black] (3,-5) circle (10pt) node {$C'$}; ;
	\draw[fill=white] (5,-5) circle (10pt) node {NW};
	\draw[fill=white!80!black] (5,-4) circle (10pt) node {$X$};
	\draw[fill=white] (7,-4) circle (10pt) node {NW};
	\draw[fill=white!80!black] (7,-2) circle (10pt) node {$C$};
	\draw[fill=white] (9,-2) circle (10pt) node {N};
	\draw[fill=white!80!black] (9,-1) circle (10pt);
	\draw[fill=white] (11,-1) circle (10pt) node {N};
	\draw[fill=white!80!black] (11,-0) circle (10pt);
\end{tikzpicture}
\caption{Indexing river and $N$/$W$-terminating elements}
\label{fig:zigzag}
\end{figure}
\end{example}

\subsection{Proper partial rotation and AMBC}
Here we define (proper) partial rotations and study their interactions with AMBC.
\begin{defn}
For a partial permutation $w$ and a stream $S \subset w$, we set $S=\{(x_i, w(x_i)) \mid i \in \bZ\}$ so that $i<j$ if and only if $x_i<x_j$. We define the partial rotation on $w$ with respect to $S$, denoted $\pr_S(w)$, as follows. 
\[
\pr_S(w)(a) =\left\{
\begin{aligned}
&w(x_{i+1}) & \textnormal{ if } a=x_i,
\\&w(a) & \textnormal{ otherwise}.
\end{aligned}
\right.
\]
Likewise, we also define the inverse partial rotation $\pr^{-1}_S(w)$ as follows.
\[
\pr_S^{-1}(w)(a) =\left\{
\begin{aligned}
&w(x_{i-1}) & \textnormal{ if } a=x_i,
\\&w(a) & \textnormal{ otherwise}.
\end{aligned}
\right.
\]
We often abbreviate the partial rotation (resp. inverse partial rotation) to PR (resp. IPR). We say that the partial rotation $\pr_S$ (resp. the inverse partial rotation $\pr_S^{-1}$) applied to $w$ is \emph{proper} if $S$ is the northeast (resp. southwest) channel of some river of $w$.
\end{defn}

In \cite{cpy18}, the term \emph{shift} is used in order to denote the PR/IPRs. Moreover, the notations $w\tbr{1}_S$, $w\tbr{-1}_S$, $S\tbr{1}$, and $S\tbr{-1}$ in \cite{cpy18} are translated to $\pr_S(w), \pr^{-1}_S(w), \pr_S(S),$ and $\pr^{-1}_S(S)$ in our language. When $R$ is a river of $w$ and $C$ (resp. $C'$) is the southwest (resp. northeast) channel of $R$, then $w\tbr{1}_R$ and $w\tbr{-1}_R$ in \cite{cpy18} are equal to $\pr_C(w)$ (resp. $\pr_{C'}^{-1}(w)$) which is by definition a proper partial rotation (resp. a proper inverse partial rotation). We stick to our notations in this paper, but allow to use $S\tbr{1}$ and $S\tbr{-1}$ instead of $\pr_S(S)$ and $\pr_S^{-1}(S)$.

Let us describe the relation between proper PR/IPRs and AMBC.
\begin{thm} \label{thm:rho} For a partial permutation $w$ with $\fC_w = (C_1, \ldots, C_r)$, assume that $R$ is a river of $w$ whose southwest (resp. northeast) channel is $C_{r'}$ (resp. $C_{r}$). (Note that both $\pr_{C_r}$ and $\pr^{-1}_{C_{r'}}$ are proper with respect to $w$.  Suppose that $\Phi(w) = (P, Q, \vv{\rho})$. Then we have
\[\Phi(\pr_{C_r}(w))=(P, Q, \vv{\rho}+\vv{e_r}) \quad \textnormal{ and } \quad \Phi(\pr^{-1}_{C_{r'}}(w))=(P, Q, \vv{\rho}-\vv{e_{r'}}).\]
Here, $\vv{e_i}$ is a vector whose $i$-th coordinate is 1 and 0 elsewhere.
%
%
\end{thm}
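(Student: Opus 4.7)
The strategy is to analyze how the proper partial rotation $\pr_{C_r}$ interacts with the forward step $w\mapsto(\fw(w),\st(w))$ of AMBC and then induct on $l(\lambda)$. Recall from Section~\ref{sec:chan} that if $\fC_w=(C_1,\ldots,C_m)$, then $\fC_{\fw(w)}=(D_1,\ldots,D_{m-1})$, each $D_i$ lies between $C_i$ and $C_{i+1}$, and $h(D_{i-1},D_i)=h(C_i,C_{i+1})$ for $i\geq 2$. In particular, if $R$ is a river of $w$ with northeast channel $C_r$ and $r\geq 2$, then the corresponding river of $\fw(w)$ has northeast channel $D_{r-1}$. By Lemma~\ref{lem:distch} we also have $\Phi(\fw(w))=(P_{\geq 2},Q_{\geq 2},\vv{\rho}_{\geq 2})$ and $a(\st(w))=\rho_1$.

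The plan splits into two cases depending on whether $r=1$ or $r\geq 2$. In the inductive case $r\geq 2$, so that $C_r$ is not the southwest channel $C_1$ of $w$, I expect the forward-step compatibility
\[
\fw(\pr_{C_r}(w))\;=\;\pr_{D_{r-1}}(\fw(w)),\qquad \st(\pr_{C_r}(w))\;=\;\st(w).
\]
Since shifting balls along $C_r$ is localized to a channel lying strictly above the first river's contribution, this operation should affect only the middle corners $Z''_m$ of the zigzags formed via the southwest channel numbering $d^{C_1}$ (which constitute $\fw(w)$), while leaving both the southwest corners $Z'''_m$ (forming $\st(w)$) and the numbering $d^{C_1}$ itself pointwise intact. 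Since $D_{r-1}$ is the northeast channel of its river in $\fw(w)$, the induction hypothesis applied to $\fw(w)$ (whose shape has length $l-1$) yields $\Phi(\pr_{D_{r-1}}(\fw(w)))=(P_{\geq 2},Q_{\geq 2},\vv{\rho}_{\geq 2}+\vv{e_{r-1}})$, and reassembling via the forward step yields $\Phi(\pr_{C_r}(w))=(P,Q,\vv{\rho}+\vv{e_r})$.

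The base case is $r=1$ (forcing $r'=1$), in which the first river of $w$ consists of the single channel $C_1$ and $\pr_{C_1}$ acts on the southwest channel itself. Here I expect
\[
\fw(\pr_{C_1}(w))\;=\;\fw(w),\qquad a(\st(\pr_{C_1}(w)))\;=\;a(\st(w))+1.
\]
Since the first river has only one channel, each zigzag $Z_m$ contains exactly one $C_1$-ball at its southwest end, and shifting this ball by PR along $C_1$ should only rotate the $Z'''_m$ corners (which feed $\st(w)$) by one position while leaving the middle $Z''_m$ corners (which produce $\fw(w)$) pointwise fixed. The altitude shift follows from the general identity $a(\pr_S(S))=a(S)+1$ for any stream $S$. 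Together these close the induction, yielding $\Phi(\pr_{C_1}(w))=(P,Q,\vv{\rho}+\vv{e_1})$.

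For the inverse partial rotation $\pr^{-1}_{C_{r'}}$, a symmetric argument applies with the roles of northeast/southwest and of PR/IPR exchanged throughout, using $a(\pr^{-1}_S(S))=a(S)-1$ in the corresponding base case. I expect the main obstacle to be the zigzag-level verification of the two compatibility claims---particularly the base case where the southwest channel itself is being shifted---which requires a careful tracing of how the PR reroutes the corners $Z''_m$ and $Z'''_m$ in each zigzag, most likely invoking Lemma~\ref{lem:indterm} together with the $N$/$W$-terminating classification from Section~\ref{sec:chan}.
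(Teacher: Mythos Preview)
Your inductive skeleton is the same as the paper's: reduce to the case $r=1$ by peeling off one forward/backward layer at a time, tracking how the proper PR descends to the next level. The two compatibility identities you state are also correct---they are equivalent (via $\fw\circ\bk_S=\mathrm{id}$ on dominant data) to the identities the paper actually uses. The genuine difference is that you propose to prove them by a direct forward-step zigzag analysis, whereas the paper works entirely through the \emph{backward} step and simply cites \cite[Theorem~16.9]{cpy18} for the base case $\pr_{C_1}(w)=\bk_{S\langle 1\rangle}(v)$ and \cite[Proposition~16.39]{cpy18} (and its reflection) for the inductive step $\bk_{S_{i}}(\pr_{D^{i}}(v_{i}))=\pr_{D^{i-1}}(v_{i-1})$. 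Since $\Psi$ is \emph{defined} as an iterated backward step, this lets the paper read off $\Phi(\pr_{C_r}(w))$ directly from the composite $\bk_{S_1}\circ\cdots\circ\bk_{S_{r-1}}\circ\bk_{S_r\langle 1\rangle}$ once dominance of $\vv{\rho}+\vv{e_r}$ is checked, with no zigzag bookkeeping at all.

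Your forward-step route is not wrong, but it carries real obstacles you have not addressed. First, to compute $\fw(\pr_{C_r}(w))$ you must know the southwest channel of $\pr_{C_r}(w)$; you implicitly assume it is $C_1$ (or $C_1\langle 1\rangle$ when $r=1$), but establishing this without already knowing $\Phi(\pr_{C_r}(w))$ is exactly the circularity the backward approach avoids. Second, your claim that $d^{C_1}$ is ``pointwise intact'' after $\pr_{C_r}$ is not obvious: shifting balls on $C_r$ can a priori lengthen paths from $C_1$ to balls northeast of $C_r$. Third, in the base case your assertion that ``the first river consists of the single channel $C_1$'' and that the $C_1$-ball sits at the southwest end of each zigzag both need justification; neither is immediate from $r=1$. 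Finally, Lemma~\ref{lem:indterm} concerns the backward numbering and indexing river, so it is a tool for the paper's route rather than yours. In short: your plan would work, but filling it in amounts to reproving the forward analogues of \cite[Theorem~16.9, Proposition~16.39]{cpy18}; the paper sidesteps this entirely by staying on the backward side where those results are already available.
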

\begin{proof}
We first prove $\Phi(\pr_{C_r}(w))=(P, Q, \vv{\rho}+\vv{e_r})$. 
Assume that $r=1$ (which forces that $r'=1$ as well). Let us write $(v,S)=(\fw(w),\st(w))$ so that $w=\bk_S(v)$, $\Phi(v) = (P_{\geq 2}, Q_{\geq 2}, \vv{\rho}_{\geq2})$, and $a(S)=\rho_1$.
Since $C_1$ is the southwest channel of $w$, $R$ is the indexing river of $w$ corresponding to $(v,S)$. Thus by \cite[Theorem 16.9]{cpy18} we have $\pr_{C_1}(w) =  \bk_{S\tbr{1}}(v)$. Since $h(C_1, C_2)>0$  by assumption, we have $\crho_1<\crho_2$ by Lemma \ref{lem:distch}, which implies that $\vv{\rho}+\vv{e_1}$ is still dominant with respect to the pair $(P, Q)$. Thus from the definition of backward algorithm we see that $\Phi(\pr_{C_1}(w)) = \Phi(\bk_{S\tbr{1}}(v)) = (P, Q, \vv{\rho}+\vv{e_1})$ (note that $a(S\tbr{1})=\rho_1+1$) as desired.

In general, we successively define $(v_1,S_1) = (\fw(w),\st(w) )$ and $(v_{i},S_{i}) = (\fw(v_{i-1}),\st(v_{i-1}))$ for $i \in [2,r]$. Let $C_r=D^0\subset w, D^1\subset v_1, \ldots, D^{r}\subset v_{r}$ be the $(m+1-r)$-th northeast channel in $\fC_w, \fC_{v_1}, \ldots, \fC_{v_{r}}$, respectively. Note that each of them is the northeast channel of some river, say $D^i \in R^i$, by Lemma \ref{lem:distch} since so is $C_r=D^0\in R^0=R$. 


The above argument for $r=1$ case shows that $\pr_{D^{r-1}}(v_{r-1}) = \bk_{S_r\tbr{1}}(v_r)$ and thus $\Phi(\pr_{D^{r-1}}(v_{r-1})) = (P_{\geq r}, Q_{\geq r}, (\vv{\rho}+\vv{e_r})_{\geq r})$. Now recall that the southwest channel of $v_{r-2}$ is in the indexing river corresponding to $(v_{r-1},S_{r-1})$. In particular, any ball in $v_{r-1}$ is $N$-terminating by Lemma \ref{lem:indterm}. Thus (the reflection statement along the anti-diagonal of) \cite[Proposition 16.39]{cpy18} shows that $\bk_{S_{r-1}}(\pr_{D^{r-1}}(v_{r-1})) =\pr_{D^{r-2}}(v_{r-2})$. (Here, $D^{r-2}$ is the northeast channel of $R^{r-2}$, and $R^{r-2}$ contains the most southwest channel among the ones of $v_{r-2}$ that are northeast of $D^{r-1}$.) Thus it follows that $\Phi(\pr_{D^{r-2}}(v_{r-2})) = \Phi(\bk_{S_{r-1}}(\pr_{D^{r-1}}(v_{r-1})) ) = (P_{\geq r-1}, Q_{\geq r-1}, (\vv{\rho}+\vv{e_r})_{\geq r-1})$.

Now we iterate this process and eventually reach the conclusion that $\pr_{C_r}(w)= \bk_{S_1}(\pr_{D^1}(v_1)) = (\bk_{S_1}\circ \bk_{S_2})(\pr_{D^2}(v_2)) = \cdots = (\bk_{S_1} \circ \cdots \circ \bk_{S_{r-1}}\circ\bk_{S_r\tbr{1}})(v_r)$. Therefore from the definition of backward algorithm we see that $\Phi(\pr_{C^r}(w))=\Phi((\bk_{S_1} \circ \cdots \circ \bk_{S_{r-1}}\circ\bk_{S_r\tbr{1}})(v_r))= (P, Q, \vv{\rho}+\vv{e_r})$ as desired. (Here we use the fact that $\vv{\rho}+\vv{e_r}$ is dominant with respect to $(P,Q)$.)

To prove the second claim $\Phi(\pr^{-1}_{C_{r'}}(w))=(P, Q, \vv{\rho}-\vv{e_{r'}})$ one may similarly argue as above. The case when $r'=1$ is almost the same as the above, and for the inductive argument one needs to use (the reflection statement along the anti-diagonal of) \cite[Proposition 16.38]{cpy18} instead of  \cite[Proposition 16.39]{cpy18}. We omit the details.
%
%
%
%
%
\end{proof}

\begin{example} \label{ex:ppr} Suppose that $w=[6,1,18,3,19,24,12,15,17,10] \in \extS{10}$ as in Example \ref{ex:fCw}. Then we have $\Phi(w) = (P, Q, \vv{\rho})=(((1,3,10),(2,5,6),(4,7,9),(8)), ((3,5,6),(7,8,9),(1,4,10), (2)), (2,3,2,0))$ and $\offset_{P,Q}=(0,1,1,0)-(0,0,2,0) = (0,1,-1,0)$. Thus $\vv{\crho}=\vv{\rho}-\offset_{P,Q}=(2,2,3,0)$. (This again shows that there are two rivers $R_1 \owns C_1, C_2$ and $R_2 \owns C_3$.)
If we calculate $\pr_C(w)$ and $\pr_C^{-1}(w)$ for each $C \in \fC_w$ when the operations are proper then
\begin{align*}
\pr_{C_1}^{-1}(w) &= [6,0,18,1,19,24,12,15,17,3]
\\&\xmapsto{\Phi} (((1,3,10),(2,5,6),(4,7,9),(8)), ((3,5,6),(7,8,9),(1,4,10), (2)), (1,3,2,0)),
\\\pr_{C_2}(w) &= [6,1,18,3,19,24,15,17,22,10]
\\&\xmapsto{\Phi} (((1,3,10),(2,5,6),(4,7,9),(8)), ((3,5,6),(7,8,9),(1,4,10), (2)), (2,4,2,0)),
\\\pr_{C_3}(w) &= [6,1,14,3,18,19,12,15,17,10]
\\&\xmapsto{\Phi} (((1,3,6,0), (2,5,9), (4,7),(8)), ((3,5,6,9), (4,7,8), (1,10), (2)),(2,3,1,0)),
\\\pr_{C_3}^{-1}(w) &= [6,1,19,3,24,28,12,15,17,10]
\\&\xmapsto{\Phi} (((1,3,6,0), (2,5,9), (4,7),(8)), ((3,5,6,9), (4,7,8), (1,10), (2)),(2,3,3,0)),
\end{align*}
as expected.
\end{example}

\subsection{Proper partial rotation and sign insertion}
Here we show that the proper PR/IPRs do not affect the result of sign insertion as the following theorem states.
\begin{thm} \label{thm:ppr}
Suppose that $\pr_C$ and $\pr_{C'}^{-1}$ are proper with respect to $w\in \extSn$ for some $C, C' \subset w$. Then we have $\sgn_\fQ(w) = \sgn_{\fQ}(\pr_C(w)) = \sgn_{\fQ}(\pr_{C'}^{-1}(w))$.
\end{thm}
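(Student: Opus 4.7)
The plan is to combine the AMBC description of proper partial rotations from Theorem~\ref{thm:rho} with the invariance of $\sgn_\fQ$ under left star operations and left multiplication by $\shift$ from Lemma~\ref{lem:sgnqpre}, and to push the statement down through the peeling $(v,S)=(\fw(w),\st(w))$ used in the proof of Theorem~\ref{thm:rho}. Neither input alone will suffice: Theorem~\ref{thm:rho} shows that proper PR preserves both $P$ and $Q$, so $w$ and $\pr_C(w)$ already share a left cell, but partial rotations are not in general expressible as compositions of left star operations and left $\shift$-multiplications---this is precisely why the paper introduces them as a new tool (cf.~the Introduction)---so Lemma~\ref{lem:sgnqpre} cannot be invoked directly. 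The argument must therefore be algorithmic.

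\textbf{Reduction to the PR case.} As in the proof of Theorem~\ref{thm:rho}, the PR and IPR statements are symmetric: one may reduce the IPR case to the PR case via the involution $w\mapsto\rot(w^{-1})$, which intertwines $\pr_C$ with an inverse partial rotation (through the anti-diagonal reflection used at the end of the proof of Theorem~\ref{thm:rho}) and which respects $\sgn_\fQ$ up to the reparametrization between our convention and Blasiak's (see the remark following Theorem~\ref{thm:main}). I therefore focus on proving $\sgn_\fQ(w)=\sgn_\fQ(\pr_C(w))$.

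\textbf{Induction on the depth of $C$.} Write $\fC_w=(C_1,\ldots,C_m)$ and let $C=C_r$ be the northeast channel of its river. I would induct on $r$, following the scheme of the proof of Theorem~\ref{thm:rho}. The base case is $r=1$, where the river of $C_1$ is the singleton $\{C_1\}$ and the proof of Theorem~\ref{thm:rho} gives $\pr_{C_1}(w)=\bk_{S\tbr 1}(v)$ with $(v,S)=(\fw(w),\st(w))$. The window notation of $\pr_{C_1}(w)$ differs from that of $w$ by a cyclic shift along $C_1$, which concretely translates certain values by $n$. Since the bumping clause of $\digamma$ already appends $n+b$ at the end of the current word after a bump, such $n$-translations are absorbed coherently by the algorithm; a step-by-step comparison of the executions of $\digamma$ on $w$ and on $\pr_{C_1}(w)$ should show that at every step both algorithms either append, producing the same entry in $\fQ$, or both bump, so that the recording tableau is unchanged. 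For the inductive step $r>1$, the identity $\pr_{C_r}(w)=\bk_{S}(\pr_{D^1}(v))$ from the proof of Theorem~\ref{thm:rho} reduces the claim to the pair $(v,D^1)$, where $D^1$ lies at depth $r-1$ in $\fC_v$; the inductive hypothesis then yields $\sgn_\fQ(v)=\sgn_\fQ(\pr_{D^1}(v))$, and one transports this equality back through $\bk_S$ using Lemma~\ref{lem:signleft}, which identifies the first-row phase of sign insertion with a specific sequence of right star operations and right $\shift$-multiplications that are mirror to the left-side operations under which $\sgn_\fQ$ is invariant.

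\textbf{Main obstacle.} The crux of the argument is the base case: verifying that the $n$-translation along $C_1$ is invisible to sign insertion. Because partial rotations are not built from star operations or $\shift$-multiplications, the argument cannot reduce to Lemma~\ref{lem:sgnqpre}; instead one must track how the translated values compare against the current $\fP$ at each stage and show that the $w''\con(n+b)$ clause of $\digamma$ precisely absorbs the discrepancy. A secondary technical difficulty in the inductive step is to formulate and verify the promised compatibility between sign insertion and the AMBC backward step $\bk_S$, most likely as a refinement of Lemma~\ref{lem:signleft} that tracks the first-row bumping phase explicitly.
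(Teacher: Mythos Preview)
Your approach has two genuine gaps, one in each of the base case and the inductive step, and the paper's argument is fundamentally different.

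\textbf{The inductive step does not work.} You want to pass from $\sgn_\fQ(v)=\sgn_\fQ(\pr_{D^1}(v))$ to $\sgn_\fQ(\bk_S(v))=\sgn_\fQ(\bk_S(\pr_{D^1}(v)))$. First, $v=\fw(w)$ is a partial permutation with strictly fewer than $n$ non-empty entries, so $\sgn_\fQ(v)$ is not defined; the sign insertion algorithm requires a full window of an element of $\extSn$. Second, even if you extended the definition, there is no established relationship between $\sgn_\fQ(w)$ and $\sgn_\fQ(\fw(w))$: sign insertion reads the window notation directly and knows nothing about AMBC layers. Lemma~\ref{lem:signleft} does not help here---it says each $\digamma$-step is a composite of \emph{right} star operations and \emph{right} $\shift$-multiplication, which preserves the left cell; the invariance of $\sgn_\fQ$ in Lemma~\ref{lem:sgnqpre} is under \emph{left} star operations and \emph{left} $\shift$-multiplication. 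These are not ``mirror'' to each other in any way that lets you transport through $\bk_S$.

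\textbf{The base case is the whole theorem.} Your description of the base case (``a step-by-step comparison of the executions of $\digamma$ \ldots\ should show that at every step both algorithms either append \ldots\ or both bump'') is exactly the statement to be proved, not an argument for it. A proper partial rotation does not merely translate some values by $n$; it cyclically permutes the values along the channel with one wraparound gaining $+n$, and the channel itself moves as the algorithm proceeds. Showing that this is invisible to each $\digamma$-step requires a case analysis of how the rotated values sit relative to the current $\fP$, and you give none.

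The paper's proof does not use the AMBC peeling at all. Instead it proves a Diamond Lemma: a proper PR/IPR commutes, up to canonical identification of channels, with any single right star operation and with right multiplication by $\shift$. Since each nontrivial step of $\digamma$ is (by Lemma~\ref{lem:signleft}) a sequence of such operations, one can run the sign insertions for $w$ and for $\pr_C(w)$ in parallel, completing the commuting square at every step; the $\fQ$-tableau records only which steps are appends and which are bumps, and the Diamond Lemma forces these to coincide. The Diamond Lemma is proved by a direct, exhaustive case check on the overlap between $\{p-1,p,p+1\}$ and the channel. That case analysis is the real content you are missing.
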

The main tool to prove this theorem is the following Diamond Lemma discussing interactions between right star operations and proper PR/IPRs.

\begin{lem} [Diamond lemma] \label{lem:diamond}
Suppose that we are given a partial permutation $w$ with $\fC_w=(C_1, \ldots, C_m)$, $p \in [1,n]$, and $q\in [1,m]$. Assume that:
\begin{enumerate}[label=\textbullet]
\item Either $w(p-1)$ is between $w(p)$ and $w(p+1)$ so that a right star operation $w \mapsto w^*$ that switches $w(p)$ and $w(p+1)$ is well-defined, or $w(p+1)$ is between $w(p-1)$ and $w(p)$ so that a right star operation $w \mapsto w^*$ that switches $w(p-1)$ and $w(p)$ is well-defined. Let us call it ``the right star operation centered at $p$''.
\item $C_q$ is a northeast channel of a river of $w$.
\end{enumerate}
Then, we have:
\begin{enumerate}[label=\textbullet]
\item The $q$-th southwest channel $C_q^*$ of $\fC_{w^*}$ is a northeast channel of a river of $w^*$.
\item The right star operation centered at $p$ is well-defined for $\pr_{C_q}(w)$, say $\pr_{C_q}(w)^*$.
\end{enumerate}
Furthermore, we have $\pr_{C_q}(w)^* = \pr_{C_q^*}(w^*)$. In other words, we may ``canonically complete the commutative diagram'' when we are given $w\mapsto w^*$ and $w \mapsto \pr_{C_q}(w)$. The same result also holds when one replaces proper partial rotations with proper inverse partial rotations and/or replaces right star operations with multiplication by $\shift$ on the right.
\end{lem}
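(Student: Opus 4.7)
The plan is to verify the identity by applying the affine matrix-ball map $\Phi$ to both $\pr_{C_q}(w)^*$ and $\pr_{C_q^*}(w^*)$, matching the resulting triples in $\RSYT(\lambda) \times \RSYT(\lambda) \times \bZ^{l(\lambda)}$, and then invoking injectivity of $\Phi$ on $\extSn$. Theorem \ref{thm:rho} shows that a proper PR only changes the $\vv{\rho}$-component by $+\vv{e_q}$ and a proper IPR changes it by $-\vv{e_{q'}}$, while the star-operation and shift formulas recalled in Section \ref{sec:cells} alter only the $Q$-component and a controllable portion of $\vv{\crho}$. The two kinds of changes commute at the level of triples, so both sides will produce the same image.

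Before the bookkeeping, I will verify the two structural bullets. For the claim that $C_q^*$ is a northeast channel of a river of $w^*$: by Lemma \ref{lem:distch}, the river decomposition of $\fC_{w^*}$ is detected by the run pattern of $(\crho^*_1, \ldots, \crho^*_m)$. The star formulas give $\vv{\crho}^* = \vv{\crho}$ when $* \not\sim n$ and $\vv{\crho}^* - \vv{\crho} = \vv{\delta}(Q, 1) - \vv{\delta}(Q, n)$ when $* \sim n$; in either case the difference is constant on the index set $[1, m]$, since by definition $\vv{\delta}(Q, s)$ takes a common value on all indices of a fixed rectangle of $\lambda$. Hence the run pattern on $[1, m]$ is preserved, and $C_q^*$ inherits the property of being the northeast channel of its river. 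The $\shift$ variant is handled identically via the formula for $\Phi(w\shift^{-1})$.

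The main obstacle will be the second bullet: well-definedness of the right star operation centered at $p$ on $\pr_{C_q}(w)$. If $\{p-1, p, p+1\}$ is disjoint modulo $n$ from the support of $C_q$, then $\pr_{C_q}$ fixes the three relevant entries and the property is inherited from $w$. Otherwise I will run a direct case analysis on how the cyclic substitution $w(x_i) \mapsto w(x_{i+1})$ alters $w(p-1), w(p), w(p+1)$. Since $C_q$ is a stream — so its values along its support are strictly monotonic — and since $C_q$ is a \emph{proper} channel (in particular extremal in its river), I expect the required interleaving inequality defining the star operation to survive into $\pr_{C_q}(w)$: the value placed at a support position is always on the ``correct side'' of its neighbors. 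For shifts in place of stars the well-definedness is automatic, because $w\cdot\shift^{\pm 1}$ is always defined and commutes with $\pr_{C_q}$ at the level of window notations.

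With both bullets in place, the equality of $\Phi$-images reduces to a direct check. Writing $\Phi(w) = (P, Q, \offset_{P,Q} + \vv{\crho})$ and letting $\epsilon$ denote $0$ or $\vv{\delta}(Q,1) - \vv{\delta}(Q,n)$ according to whether $* \not\sim n$ or $* \sim n$, Theorem \ref{thm:rho} composed with the star formula yields
\[
\Phi\bigl(\pr_{C_q}(w)^*\bigr) = \bigl(P,\, Q^*,\; \offset_{P, Q^*} + \vv{\crho} + \vv{e_q} + \epsilon\bigr),
\]
while the star formula composed with Theorem \ref{thm:rho} (applicable thanks to the first bullet) gives
\[
\Phi\bigl(\pr_{C_q^*}(w^*)\bigr) = \bigl(P,\, Q^*,\; \offset_{P, Q^*} + \vv{\crho} + \epsilon + \vv{e_q}\bigr).
\]
These triples coincide, so injectivity of $\Phi|_{\extSn}$ forces $\pr_{C_q}(w)^* = \pr_{C_q^*}(w^*)$. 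The remaining three variants — IPR with stars, PR with right shifts, and IPR with right shifts — follow from the same template, substituting $-\vv{e_{q'}}$ for $+\vv{e_q}$ whenever an IPR is involved and the $\shift^{\pm 1}$-formula for the star formula whenever a shift is involved.
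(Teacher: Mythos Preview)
Your strategy --- verify the two bullets, then compute $\Phi$ of both sides and invoke injectivity --- is genuinely different from the paper's proof, which proceeds by a direct case analysis on $\#\bigl([p-1,p+1]\cap X\bigr)$ (with $X$ the support of $C_q$), in each sub-case explicitly writing down the channel $C'\subset w^*$ and checking the star move on $\tilde w$ by hand. Your handling of the first bullet is correct and is essentially Lemma~\ref{lem:knuth}. However, there are two real gaps.

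\textbf{The second bullet is the heart of the lemma, and you have not proved it.} Saying ``I expect the required interleaving inequality \ldots\ to survive'' is not an argument. In the paper's case analysis, several configurations (e.g.\ Case~2(5)) are ruled out only by contradiction with $C_q$ being a \emph{channel} (one exhibits a stream of strictly greater density in $w$ or $\tilde w$); this is exactly where the ``properness'' hypothesis does work, and it does not fall out of a monotonicity observation. If you want to avoid the case analysis, you could instead argue that the right descent set of $w$ depends only on the left cell, hence only on $Q$; since Theorem~\ref{thm:rho} gives $\pr_{C_q}(w)$ the same $Q$, it has the same descent set, so exactly one of $p-1,p$ is a descent of $\pr_{C_q}(w)$ and the star centered at $p$ is defined. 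But you did not make this argument.

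\textbf{Your $\Phi$-computation silently assumes the same $*\sim i$ is used on both sides.} It need not be. In the paper's Case~2(4) one has $w=\cdots a\,b\,y\cdots$ with $a<y<b$, so the star centered at $p$ is $*\sim p-1$ (swap $a,b$); after the rotation $\tilde w=\cdots b\,c\,y\cdots$ with $y<b<c$, and now the star centered at $p$ is $*\sim p$ (swap $c,y$). Thus on the left-hand side the formula produces $Q^{*_p}$, while on the right-hand side it produces $Q^{*_{p-1}}$, and your displayed identities tacitly equate them. This can be repaired --- the ``star centered at $p$'' is the Kazhdan--Lusztig star for the pair $\{s_{p-1},s_p\}$, and the induced map on left cells depends only on the pair and the cell, forcing $Q^{*_{p-1}}=Q^{*_p}$ here --- but that fact is neither stated in the paper nor invoked in your argument, and without it your two displayed triples are not known to agree in the $Q$-slot.
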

Since star operations and multiplication by $\shift$ preserve each two-sided cell, or equivalently the Greene-Kleitman partition of a Shi poset, it makes sense to take the $q$-th southwest channel of $w^*$ and $w\cdot\shift$ on the lemma above. Also note that the statement for multiplication by $\shift$ on the right is straightforward because the channels of $w\cdot \shift$ are simply the shifts of those of $w$.

\begin{example}
Let $w=[6,1,18,3,19,24,12,15,17,10]$ as in Example \ref{ex:fCw} and \ref{ex:ppr}. Then the star operation $w \mapsto w^*$ centered at 7 is well defined and $w^* = [6,1,18,3,19,12,24,15,17,10].$ Let us consider $\pr_{C_2}(w) = [6,1,18,3,19,24,15,17,22,10].$ Then $\pr_{C_2}(w) \mapsto \pr_{C_2}(w)^*$ centered at 7 is well-defined and $\pr_{C_2}(w)^* = [6,1,18,3,19,15,24,17,22,10].$ Moreover, $w^* \mapsto\pr_{C_2}(w)^*$ is the proper partial rotation with respect to the channel $[\emptyset, \emptyset, \emptyset, \emptyset, \emptyset, 12, \emptyset, 15, 17, \emptyset]$.
\end{example}

First let us discuss how to use Lemma \ref{lem:diamond} to prove Theorem \ref{thm:ppr}.
\begin{proof}[Proof of Theorem \ref{thm:ppr}, assuming Lemma \ref{lem:diamond}]
It suffices only to prove for the proper partial rotation, i.e. $\sgn_\fQ(w) = \sgn_{\fQ}(\pr_C(w))$. Let us set $w_0=w, \fP_0=\fQ_0=\emptyset$ and sequentially define $(i, \fP_i, \fQ_i, w_i) = \digamma(i-1, \fP_{i-1}, \fQ_{i-1}, w_{i-1})$ for $i \in [1,N]$ where $w_N=\emptyset$. Thus in particular we have $\sgn_\fP(w) = \fP_N$ and $\sgn_\fQ(w) = \fQ_N$. Let us set $v_i = \fP_i \con w_i$ for $i\in [0,N]$. Note that either $v_i= v_{i-1}$ or $v_{i}$ is obtained from $v_{i-1}$ by applying right star operations followed by multiplication by $\shift$ on the right (cf. the proof of Lemma \ref{lem:signleft}).

Let $\tilde{w}=\pr_C(w)$ where $\pr_C$ applied to $w$ is proper. Then by Lemma \ref{lem:diamond}, we may (uniquely) find $ \tilde{w}=\tilde{v}_0, \tilde{v}_1, \ldots, \tilde{v}_N$ such that $v_i \mapsto \tilde{v}_i$ is a proper partial rotation, $v_i = v_{i-1}$ if and only if $\tilde{v}_i = \tilde{v}_{i-1}$, and if $\tilde{v}_i \neq \tilde{v}_{i-1}$ then $\tilde{v}_i$ is obtained from $\tilde{v}_{i-1}$ by a series of right star operations followed by multiplication by $\shift$ on the right. We claim that there exist $\tilde{\fP}_i, \tilde{w}_i$ for $i \in [0,N]$ such that $\tilde{v}_i = \tilde{\fP}_i \con \tilde{w}_i$ for $i \in [0,N]$ and $(i, \tilde{\fP}_{i}, \fQ_{i}, \tilde{w}_{i})=\digamma(i-1, \tilde{\fP}_{i-1}, \fQ_{i-1}, \tilde{w}_{i-1})$ for $i \in [1,N]$. (In particular the lengths of $\tilde{\fP}_i, \fP_i,$ and $\fQ_i$ are all equal.)

 We prove by induction on $i$. It is obvious when $i=0$ by setting $\fP_0 = \emptyset$ and $\tilde{w}_0 = \tilde{w}$, so suppose that it is true up to $i-1$. Let $p$ be the length of $\fP_{i-1}$. Then we have $v_{i-1}(1)<\cdots<v_{i-1}(p)$ and $\tilde{v}_{i-1}(1)<\cdots<\tilde{v}_{i-1}(p)$. If $v_{i-1}(p)<v_{i-1}(p+1)$, then it means that $v_{i-1}=v_i$, i.e. $v_{i-1} \mapsto v_i$ is a trivial step, which implies $\tilde{v}_{i-1} = \tilde{v}_{i}$ by assumption. We claim that $\tilde{v}_{i-1}(p)<\tilde{v}_{i-1}(p+1)$ so that we may set $\tilde{\fP}_i$ and $\tilde{w}_i$ similar to $\fP_i$ and $w_i$. Indeed, if $\tilde{v}_{i-1}(p)>\tilde{v}_{i-1}(p+1)$ then it is possible to apply the right star operation centered at $p$ on $\tilde{v}_{i-1}$. However, by Lemma \ref{lem:diamond}, it means that the same star operation can be applied to $v_{i-1}$, which is a contradiction.

It remains to consider the case when $v_{i-1}(p)>v_{i-1}(p+1)$. Then $v_{i}$ is obtained from $v_{i-1}$ by applying the right star operations centered at $p, p-1, \ldots, 2$, respectively, followed by multiplication by $\shift$ on the right. By Lemma \ref{lem:diamond}, the same process applies to $\tilde{v}_{i-1}$ to get $\tilde{v}_i$. However, as $\tilde{v}_{i-1}(1)<\cdots<\tilde{v}_{i-1}(p)$ and $\tilde{v}_{i-1}(p)>\tilde{v}_{i-1}(p+1)$ by assumption, one can easily show that this is the usual ``bumping process'' (which inserts $\tilde{v}_{i-1}(p+1)$ to $\tilde{\fP}_{i-1}$) followed by multiplication by $\shift$ on the right. This is clearly a valid step of sign insertion algorithm. Thus, if we set $\tilde{\fP}_{i}$ to be the first $p$-th characters of $\tilde{v}_i$ and $\tilde{w}_i$ to be the remainder then the induction step is also valid in this case. It suffices for the proof.
\end{proof}

\begin{example} Here we compare the sign insertion of $w=[6,1,18,3,19,24,12,15,17,10]$ and $\pr_{C_2}(w)=[6,1,18,3,19,24,15,17,22,10]$. The underlined numbers are where the proper partial rotation is applied. It is clear that the sign insertion processes for $w$ and $\pr_{C_2}(w)$ are parallel.
\begin{longtable}{|c|c|c|c|}
\hline
$i$&$\fP$ & $\fQ$ & $w$\\
\hline
0&$\emptyset$ & $\emptyset$ & 6 1 18 3 19 24 \underline{12} \underline{15} \underline{17} 10\\
1&6&1&1  18 3 19 24 \underline{12} \underline{15} \underline{17} 10\\
2&1&1&18 3 19 24 \underline{12} \underline{15} \underline{17} 10 16\\
3&1  18&1 3&3 19 24 \underline{12} \underline{15} \underline{17} 10 16\\
4&1  3&1 3&19 24 \underline{12} \underline{15} \underline{17} 10 16 28\\
5&1  3  19&1 3 5&24 \underline{12} \underline{15} \underline{17} 10 16 28\\
6&1  3  19 24&1 3 5  6&\underline{12} \underline{15} \underline{17} 10 16 28\\
7&1  3  \underline{12} 24&1 3 5  6&\underline{15} \underline{17} 10 16 28 29\\
8&1  3  \underline{12} \underline{15}&1 3 5  6&\underline{17} 10 16 28 29 34\\
9&1  3  \underline{12} \underline{15} \underline{17}&1 3 5  6 7&10 16 28 29 34\\
10&1  3  10 \underline{15} \underline{17}&1 3 5  6 7&16 28 29 34 \underline{22}\\
11&1  3  10 \underline{15} 16&1 3 5  6 7&28 29 34 \underline{22} \underline{27}\\
12&1  3  10 \underline{15} 16 28&1 3 5  6 7 12&29 34 \underline{22} \underline{27}\\
13&1  3  10 \underline{15} 16 28 29&1 3 5  6 7 12 13&34 \underline{22} \underline{27}\\
14&1  3  10 \underline{15} 16 28 29 34&1 3 5  6 7 12 13 14&\underline{22} \underline{27}\\
15&1  3  10 \underline{15} 16 \underline{22} 29 34&1 3 5  6 7 12 13 14&\underline{27} 38\\
16&1  3  10 \underline{15} 16 \underline{22} \underline{27} 34&1 3 5  6 7 12 13 14&38 39\\
17&1  3  10 \underline{15} 16 \underline{22} \underline{27} 34 38&1 3 5  6 7 12 13 14 17&39\\
18&1  3  10 \underline{15} 16 \underline{22} \underline{27} 34 38 39&1 3 5  6 7 12 13 14 18&$\emptyset$\\
\hline
\end{longtable}
\begin{longtable}{|c|c|c|c|}
\hline
$i$&$\fP$ & $\fQ$ & $w$\\
\hline
0&$\emptyset$ & $\emptyset$ & 6 1 18 3 19 24 \underline{15} \underline{17} \underline{22} 10\\
1&6&1&1 18 3 19 24 \underline{15} \underline{17} \underline{22} 10\\
2&1&1&18 3 19 24 \underline{15} \underline{17} \underline{22} 10 16\\
3&1  18&1 3&3 19 24 \underline{15} \underline{17} \underline{22} 10 16\\
4&1  3&1 3&19 24 \underline{15} \underline{17} \underline{22} 10 16 28\\
5&1  3  19&1 3 5&24 \underline{15} \underline{17} \underline{22} 10 16 28\\
6&1  3  19 24&1 3 5  6&\underline{15} \underline{17} \underline{22} 10 16 28\\
7&1  3  \underline{15} 24&1 3 5  6&\underline{17} \underline{22} 10 16 28 29\\
8&1  3  \underline{15} \underline{17}&1 3 5  6&\underline{22} 10 16 28 29 34\\
9&1  3  \underline{15} \underline{17} \underline{22}&1 3 5  6 7&10 16 28 29 34\\
10&1  3  10 \underline{17} \underline{22}&1 3 5  6 7&16 28 29 34 \underline{25}\\
11&1  3  10 16 \underline{22}&1 3 5  6 7&28 29 34 \underline{25} \underline{27}\\
12&1  3  10 16 \underline{22} 28&1 3 5  6 7 12&29 34 \underline{25} \underline{27}\\
13&1  3  10 16 \underline{22} 28 29&1 3 5  6 7 12 13&34 \underline{25} \underline{27}\\
14&1  3  10 16 \underline{22} 28 29 34&1 3 5  6 7 12 13 14&\underline{25} \underline{27}\\
15&1  3  10 16 \underline{22} \underline{25} 29 34&1 3 5  6 7 12 13 14&\underline{27} 38\\
16&1  3  10 16 \underline{22} \underline{25} \underline{27} 34&1 3 5  6 7 12 13 14&38 39\\
17&1  3  10 16 \underline{22} \underline{25} \underline{27} 34 38&1 3 5  6 7 12 13 14 17&39\\
18&1  3  10 16 \underline{22} \underline{25} \underline{27} 34 38 39&1 3 5  6 7 12 13 14 18&$\emptyset$\\
\hline
\end{longtable}
\end{example}

\subsection{Proof of Diamond Lemma}
It remains to prove Lemma \ref{lem:diamond}. Recall the definition of $\fC_w$ in \ref{sec:chan}.
\begin{lem} \label{lem:knuth} For a partial permutation $w$, right star operations and multiplication by $\shift$ on the right do not change the distances between channels in $\fC_w$.
\end{lem}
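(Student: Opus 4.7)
My plan is to reduce the statement to a computation in the AMBC coordinates using Lemma \ref{lem:distch}, and then to read off the result from the explicit star-operation and shift transformation formulas of Section \ref{sec:cells}. Since right star operations and right multiplication by $\shift$ preserve the two-sided cell, they preserve the Greene--Kleitman partition $\lambda$ of the Shi poset and in particular the multiplicity $m$ of $\lambda_1$, so $\#\fC_w = \#\fC_{w^*} = \#\fC_{w\shift} = m$. By Lemma \ref{lem:distch} we have $h(C_i,C_{i+1}) = \crho_{i+1}-\crho_i$ for $i \in [1,m-1]$, and by Lemma \ref{lem:addh} together with $C_1 \leq_{SW} \cdots \leq_{SW} C_m$, every pairwise distance $h(C_i,C_j)$ is the sum of consecutive differences. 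Thus it suffices to show that the numbers $\crho_{i+1}-\crho_i$ for $i \in [1,m-1]$ are unchanged.

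For a right star operation $w \mapsto w^*$ with $* \not\sim n$, the formula $\Phi(w^*) = (P,Q^*,\offset_{P,Q^*}+\vv{\crho})$ gives that $\vv{\crho}$ itself is invariant, so the claim is immediate. For $* \sim n$, $\vv{\crho}$ is modified by $\vv{\delta}(Q,1)-\vv{\delta}(Q,n)$, and the key observation is that for $i \in [1,m]$, the entry $\delta_i$ of $\vv{\delta}(Q,s)$ depends only on whether $s \in Q_1$. Indeed, since $\lambda_i = \lambda_1$, the defining condition $\lambda_{a-1} > \lambda_a = \lambda_i$ combined with the convention $\lambda_0 = \infty$ forces $a = 1$, so $\delta_i = 1$ iff $s \in Q_1$, independently of $i \in [1,m]$. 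Consequently $(\vv{\delta}(Q,1)-\vv{\delta}(Q,n))_i$ is constant for $i \in [1,m]$, hence the differences $\crho_{i+1}-\crho_i$ are preserved on that range. For multiplication by $\shift$ on the right the analysis is identical: inverting the formula $\Phi(w\shift^{-1}) = (P, \shift(Q), \offset_{P,\shift(Q)} + \vv{\crho} - \vv{\delta}(Q,n))$ yields a modification of $\vv{\crho}$ by $\pm \vv{\delta}(Q',n)$ for some tableau $Q'$ of the same shape, whose first $m$ entries are again constant by the same reasoning.

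The main thing to verify carefully is the shape-constancy of $(\vv{\delta}(Q,s))_{[1,m]}$, which I expect to be the principal (and essentially the only nontrivial) point, since everything else follows mechanically from the AMBC formulas and from Lemmas \ref{lem:addh} and \ref{lem:distch}. Once this constancy is established, the fact that consecutive differences of $\vv{\crho}$ on $[1,m]$ are preserved automatically preserves both the river decomposition of $[1,m]$ (which encodes which of the $C_i$ are southwest/northeast channels of their rivers) and all pairwise distances $h(C_i,C_j)$, completing the argument.
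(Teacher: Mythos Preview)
Your proposal is correct and follows essentially the same approach as the paper: the paper's proof is a one-line reference to Lemma~\ref{lem:distch} and the transformation formulas at the end of Section~\ref{sec:cells}, with the parenthetical remark that the key point is $\delta_i=\delta_j$ whenever $\lambda_i=\lambda_j$. You have simply unpacked this remark explicitly by checking that $(\vv{\delta}(Q,s))_i$ is constant for $i\in[1,m]$, which is exactly the content the paper flags as ``crucial.''
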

\begin{proof}
This follows from Lemma \ref{lem:distch} together with the last part of Section \ref{sec:cells}. (Here, it is crucial that $\vv{\delta}(T,s)=(\delta_1, \ldots, \delta_l)$ satisfies that $\delta_i=\delta_j$ whenever $\lambda_i=\lambda_j$.)
\end{proof}
The following lemma describes the reason why the proper PR/IPRs are useful.
\begin{lem} \label{lem:NESW}
If $C \subset w\in \extSn$ is a channel then $\pr_C(w)$ (resp. $\pr_C^{-1}(w)$) and $w$ are in the same two-sided cell if and only if $S$ is the northeast (resp. southwest) channel of a river of $w$. (In this case such element is in the same left and right cell of $w$ by Theorem \ref{thm:rho}.)
\end{lem}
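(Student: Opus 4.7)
The plan is to handle the two directions separately, using Theorem \ref{thm:rho} directly for the ``if'' direction and an argument about the Greene--Kleitman partition of the Shi poset for the ``only if'' direction. For the ``if'' direction, suppose $C$ is the northeast channel of some river $R$ of $w$; then $C$ appears as some component $C_r$ of $\fC_w$, and writing $\Phi(w) = (P, Q, \vv{\rho})$, Theorem \ref{thm:rho} gives $\Phi(\pr_C(w)) = (P, Q, \vv{\rho} + \vv{e_r})$. Thus $w$ and $\pr_C(w)$ share the same tabloid pair $(P,Q)$, which places them in the same left cell $\lc_Q$ and the same right cell $(\lc_P)^{-1}$, hence a fortiori in the same two-sided cell $\tsc_\lambda$ with $\lambda = \sh(P)$; this also establishes the parenthetical remark in the statement. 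The case of $\pr_C^{-1}$ with $C$ the southwest channel is entirely analogous, using the second formula of Theorem \ref{thm:rho}.

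For the ``only if'' direction I would argue the contrapositive. Suppose $C \subset w$ is a channel which is not the northeast channel of any river of $w$. Let $R$ be the river containing $C$ and let $\widetilde{C}$ be the northeast channel of $R$, so $C \leq_{SW} \widetilde{C}$ with $C \neq \widetilde{C}$. The goal is to show that the Greene--Kleitman partition of $P_{\pr_C(w)}$ differs from that of $P_w$, which is enough since two-sided cells are parametrized by these partitions.

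The partial rotation $\pr_C$ replaces the stream $C = \{(x_i, w(x_i))\}$ by the shifted stream $C^{\textnormal{sh}} := \{(x_i, w(x_{i+1}))\}$, which is contained in $\pr_C(w)$ and has the same density $r$ as $C$; hence the width of $P_{\pr_C(w)}$ is at least $r$. The main claim I would establish is that the multiplicity of $r$ in the Greene--Kleitman partition of $P_{\pr_C(w)}$ is strictly less than that in $P_w$. The intuition is that the cyclic shift along $C$ raises the channel-numbering level of the positions in $C$ by one, so that $C^{\textnormal{sh}}$ and the unchanged $\widetilde{C}$ no longer lie in a common river of $\pr_C(w)$, although prior to the shift $C$ and $\widetilde{C}$ did; thus at least one disjoint top-density channel has been lost. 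To make this precise I would compare $\fw(\pr_C(w))$ with $\fw(w)$, using the distance formula Lemma \ref{lem:distch} together with the channel-numbering setup from Section \ref{sec:chan} to read off the change in the multiplicity of the top part directly from the weight vector $\vv{\crho}$.

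The hard part will be carrying out this multiplicity-decrease argument rigorously, since it requires tracking the zigzag structure of Section \ref{sec:chan} under $\pr_C$ and verifying that no new channel of density $r$ arises in $\pr_C(w)$ to compensate for the lost one. Once the PR case is in hand, the IPR case (with $C$ not the southwest channel of any river) follows symmetrically: one either repeats the argument with the roles of SW and NE reversed, or applies the involution $\rot$, which conjugates partial rotations with inverse partial rotations and swaps southwest with northeast channels, reducing the IPR case to the PR case.
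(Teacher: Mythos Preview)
Your ``if'' direction is fine and matches the paper's one-line appeal to Theorem~\ref{thm:rho}.

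The ``only if'' direction has a real gap, and the intuition points the wrong way. You propose that after the rotation $C^{\textnormal{sh}}$ and $\widetilde C$ land in distinct rivers of $\pr_C(w)$ and that therefore ``a disjoint top-density channel is lost.'' But even granting the premise, the implication fails: river membership is governed by the distance $h(\cdot,\cdot)$ and says nothing about the count of pairwise disjoint density-$r$ streams, so separating into different rivers cannot by itself lower that count. More fundamentally, your whole setup---speaking of rivers and channels of $\pr_C(w)$, invoking Lemma~\ref{lem:distch} there, comparing $\fw$---tacitly assumes that the width of $P_{\pr_C(w)}$ is still $r$, and that is exactly what fails.

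The paper shows instead that the width strictly \emph{increases}. Taking any $C'\ne C$ in the same river as $C$ with $C\le_{SW}C'$ and $h(C,C')=0$, one labels $c_i\in C$, $b_i\in C'$, and $d_i\in C\tbr{1}$ consistently and locates, within one period, indices $j\le k$ such that splicing $\{b_i : j\le i\le k\}$ with $\{d_i : 1\le i\le j-1\text{ or }k\le i\le r\}$ yields a stream of density $r+1$ inside $\pr_C(w)$; this immediately forces $\pr_C(w)\notin\tsc_\lambda$. The example following the lemma bears this out: every improper PR/IPR of $w\in\tsc_{(3,3,3,1)}$ there lands in a cell whose shape has first part $4$. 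So the correct target is a \emph{longer} stream built by interleaving $C'$ with the rotated $C$, not a bookkeeping argument about multiplicities via $\fw$ and weight vectors.
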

\begin{proof}
Here we prove only for partial rotations; the inverse case can be proved similarly. The if part follows from Theorem \ref{thm:rho} and it remains to prove the only if part.
Assume that $C\subset w$ is a channel which is not the northeast in its river, but its partial rotation yields a permutation in the same two-sided cell. By assumption there exists a channel $C'\neq C$ such that $C\leq_{SW} C'$ and $h(C, C')=0$, i.e. $d^C$ and $d^{C'}$ are equal up to shift by \cite[Definition 3.16]{cpy18}. We fix shifts of $d^C$ and $d^{C'}$ such that they coincide, and let $c_i \in C$ and $b_i \in C'$ be balls such that $d^C(c_i) = d^{C'}(c_i) = d^C(b_i) = d^{C'}(b_i)=i$ for $i \in \bZ$. Note that by assumption either $b_i =c_i$ or $c_i \leq_{SW} b_i$, and there exists $i\in \bZ$ such that $b_i\neq c_i$.



We claim that there exists $j \in \bZ$ such that $b_j=(y, w(y)) \in C'-C$ (thus $c_j \leq_{SW} b_j$) and $c_{j-1}=(x, w(x))\in C$ satisfies $x<y$ and $w(x)<w(y)$. If $c_{j-1}=b_{j-1}$ then the claim is obvious, thus it suffices to consider the case when $C\cap C' = \emptyset$. However, if the assumption is false for any $j \in \bZ$ then we may lower the labels of the balls in $C'$ by 1 and the numbering still satisfies the monotone property, which is a contradiction. Thus such $j$ always exists.

Now for $i \in \bZ$ let $d_i$ be a ball in the partial rotation of $C$ so that $c_i$ and $d_i$ share the same $x$-coordinate, or equivalently the $y$-coordinate of $d_{i}$ is the same as that of $c_{i+1}$. Let $b_j \in C' -C$ be as above. Then it follows that $x$-coordinate of $b_j$ is between those of $c_{j-1}$ and $c_j$ and the $y$-coordinate of $b_j$ is bigger than that of $c_j$, which in turn means that $d_{j-1}$ is northeast of $b_j$. In addition, similar argument shows that there exists $k \in \bZ$ such that $b_{k}$ is northeast of $d_{k}$.

By adjusting the shift of $d^C$ and $d^{C'}$ necessary, we may assume that $1 \leq j \leq k\leq r$ where $r$ is the density of $C$. Then the conditions above imply that $(\{b_i \mid j\leq i\leq k\}\cup\{d_i \mid 1\leq i\leq j-1 \textnormal{ or } k\leq i\leq r\})+\bZ(n,n) \subset w$ is a stream of density $r+1$, which contradicts the assumption that $\pr_C(w)$ is in the same two-sided cell of $w$ (cf. \cite[Corollary 11.5]{cpy18}). Thus the result follows.
\end{proof}
\begin{example} Suppose that $w=[6,1,18,3,19,24,12,15,17,10]$ as in Example \ref{ex:ppr} and \ref{ex:fCw}. We already calculated $\pr_{C_1}^{-1}(w)$, $\pr_{C_2}(w)$, $\pr_{C_3}(w)$, and $\pr_{C_3}^{-1}(w)$, and their images under $\Phi$. Let us calculate the remaining possibilities.
\begin{align*}
\pr_{C_1}(w) &= [6,3,18,10,19,24,12,15,17,11]
\\&\xmapsto{\Phi} (((1,3,6,10),(2,5,9),(4,7),(8)), ((3,5,6,9),(4,7,8),(1,10), (2)), (3,4,1,0)),
\\\pr_{C_2}^{-1}(w) &=[6,1,18,3,19,24,7,12,15,10]
\\&\xmapsto{\Phi} (((1,3,7,10),(2,5,6),(4,8,9)), ((1,3,5,6),(7,8,9),(2,4,10)), (1,3,2)),
\\\pr_{C_2'}(w) &=[12,1,18,3,19,24,15,16,17,10]
\\&\xmapsto{\Phi} (((1,3,7,10),(2,5,6),(4,8,9)), ((1,3,5,6),(7,8,9),(2,4,10)), (3,3,2)),
\\\pr_{C_2'}^{-1}(w) &= [5,1,18,3,19,24,6,12,17,10]
\\&\xmapsto{\Phi} (((1,3,6,10),(2,5,9),(4,7),(8)), ((3,5,6,9),(4,7,8),(1,10), (2)), (1,4,1,0)).
\end{align*}
Note that $w\in \tsc_{(3,3,3,1)}$ but these elements are not contained in this two-sided cell.
\end{example}

We are ready to prove Lemma \ref{lem:diamond}. To recap, we assume that to an affine permutation $w$ one can apply two operations: (1) a right star operation centered at $p$ for $p \in [1,n]$; (2) a proper PR/IPR applied to the $q$-th southwest channel $C$ in $\fC_w$.
Let us denote by $w^*$ the outcome of the right star operation and by $\tilde{w}$ the outcome of such a proper PR/IPR. First we claim the following. 
\begin{lem}
Suppose that there exists $\tilde{w}^* \in \extSn$ such that $\tilde{w} \mapsto \tilde{w}^*$ is again the right star operation centered at $p$ and $w^* \mapsto \tilde{w}^*$ is a PR/IPR. Then the latter operation is proper and applied to the $q$-th southwest channel of $w^*$.
\end{lem}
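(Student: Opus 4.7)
The plan is to use AMBC as a coordinate system: evaluate the four permutations of the square under $\Phi$ and read off the conclusion from their weight vectors. Two tools carry essentially all of the argument: Lemma \ref{lem:NESW} detects properness from preservation of two-sided cells, while Theorem \ref{thm:rho} records the effect of a proper PR or IPR on the weight vector as an additive $\pm \vv{e_q}$.

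First I would observe that right star operations and proper PR/IPRs preserve two-sided cells, so $w$, $w^*$, $\tilde{w}$, and $\tilde{w}^*$ all lie in a common two-sided cell $\tsc_\lambda$. In particular the hypothetical operation $w^* \mapsto \tilde{w}^*$ is a PR or IPR that preserves the two-sided cell, so by Lemma \ref{lem:NESW} its defining channel $C^{**}$ must be the northeast (in the PR case) or southwest (in the IPR case) channel of some river of $w^*$, i.e.\ the operation is already proper. Denote by $q^{**}$ the position of $C^{**}$ in $\fC_{w^*}$. By Lemma \ref{lem:knuth} the distances between consecutive channels of $\fC_w$ and $\fC_{w^*}$ agree, so the rivers occupy the same index blocks in both, and the $q$-th channel of $\fC_{w^*}$ is NE (resp.\ SW) of its river whenever the $q$-th channel of $\fC_w$ is.

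Next I would pin down $q^{**}$ by matching AMBC data. Write $\Phi(w) = (P, Q, \offset_{P,Q} + \vv{\crho})$. Since $w \mapsto \tilde{w}$ is a proper PR (resp.\ IPR) at the $q$-th channel, Theorem \ref{thm:rho} gives $\Phi(\tilde{w}) = (P, Q, \offset_{P,Q} + \vv{\crho} + \epsilon \vv{e_q})$ with $\epsilon = +1$ (resp.\ $-1$). Applying the right-star formulas of Section \ref{sec:cells} separately to $w$ and to $\tilde{w}$ at the same index $p$, and noting that the correction term---equal to $\vv{\delta}(Q, 1) - \vv{\delta}(Q, n)$ when $*\sim n$ and zero otherwise---depends only on the recording tabloid $Q$ (which $w$ and $\tilde{w}$ share), one obtains $\Phi(w^*) = (P, Q^*, \offset_{P,Q^*} + \vv{\eta})$ and $\Phi(\tilde{w}^*) = (P, Q^*, \offset_{P,Q^*} + \vv{\eta} + \epsilon \vv{e_q})$ for a common vector $\vv{\eta}$. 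Applying Theorem \ref{thm:rho} once more, now on the $w^*$ side with the proper operation at position $q^{**}$, forces the same canonical difference to equal $\epsilon^{**} \vv{e_{q^{**}}}$ with $\epsilon^{**} = +1$ for PR and $-1$ for IPR. Equating $\epsilon \vv{e_q} = \epsilon^{**} \vv{e_{q^{**}}}$ as elements of $\bZ^{l(\lambda)}$ yields $\epsilon^{**} = \epsilon$ and $q^{**} = q$, simultaneously matching the type (PR vs IPR) and the index.

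The main (minor) obstacle is keeping track of the $\vv{\delta}$-correction in the $*\sim n$ case of the right-star formulas; once one observes that both $w \mapsto w^*$ and $\tilde{w} \mapsto \tilde{w}^*$ use the same index $p$ and the same tabloid $Q$, the correction cancels, and the remainder of the proof is a direct chase through Theorem \ref{thm:rho} combined with Lemmas \ref{lem:NESW} and \ref{lem:knuth}.
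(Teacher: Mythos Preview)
Your proof is correct and follows essentially the same approach as the paper's: both use Lemma \ref{lem:NESW} to obtain properness from preservation of the two-sided cell, and then read off the index from AMBC data. The paper compresses your final step by invoking Lemma \ref{lem:knuth} (distances between channels are preserved by star operations), whereas you compute the full $\Phi$-coordinates via the right-star formulas of Section \ref{sec:cells} and match the $\epsilon\,\vv{e_q}$ directly; since Lemma \ref{lem:knuth} is itself derived from those formulas through Lemma \ref{lem:distch}, the two arguments are really the same computation at different levels of abstraction.
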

\begin{proof}
By Theorem \ref{thm:rho},  $\Phi(w)$ and $\Phi(\tilde w)$ differ by a single value of $\vv{\rho}$ on the top block. Since $w, \tilde{w}, w^*, \tilde{w}^*$ all lie within the same two-sided cell by assumption, Lemma \ref{lem:NESW} tells us that the PR/IPR $w^* \mapsto \tilde {w}^*$ should be proper. Since the distances between channels of $w^*$ (resp. $\tilde{w}^*$) are the same as those of $w$ (resp. $\tilde{w}$) by Lemma \ref{lem:knuth}, we see that the PR/IPR $w^* \mapsto \tilde {w}^*$ should be applied to the $q$-th channel of $w^*$.
\end{proof}
Thus, it is enough to show that the commutative diagram can be completed by a PR/IPR (without checking properness) applied to a channel of $w^*$ (without checking that it is the $q$-th southwest channel). Meanwhile, without loss of generality we may assume that $p \in [2,n-1]$; if $p\in \{1,n\}$, then we may multiply $\shift$ on the right so that the right star operation is centered in $[2,n-1]$; this process also respects proper PR/IPRs. (See the comment after Lemma \ref{lem:diamond}. Also, here $n \geq 3$ since the right star operation is not well-defined for $n \leq 2$.) Moreover, it suffices only to consider partial rotations because the inverse case can be deduced by considering $w^{-1}$.

We write $\tilde{w}=\pr_C(w)$ where $C \subset w$ is a northeast channel of a river. We set $a, b, c\in \bZ$ to be the $y$-coordinates of some elements in $C$ such that $w^{-1}(a)<w^{-1}(b)<w^{-1}(c)$, and we assume that no other elements in $C$ are between them, i.e. if $(y,w(y)) \in C$ and $a<w(y)<c$ then $b=w(y)$. (If the density of $C$ is 2 (resp. 1) then we have $c=a+n$ (resp. $c=b+n=a+2n$).)

We argue case-by-case based on the size of $[p-1,p+1]\cap X$. Here we will verify that (1) the right star operation centered at $p$ is well-defined for $\tilde{w}$, and (2) there exists a channel $C' \subset w^*$ such that its corresponding partial rotation coincide with $\tilde{w}^*$.

\subsubsection*{\textbf{Case 0.}} Suppose that $\#([p-1, p+1] \cap X)=0$. In this case the two operations simply commute, and the commutative diagram can be trivially completed.
 
\subsubsection*{\textbf{Case 1.}} Suppose that $\#([p-1, p+1] \cap X)=1$.
\begin{enumerate}
\item The right star operation looks like (here either side can be $w$):
\[
 \cdots x b y \cdots c \cdots \leftrightsquigarrow  \cdots b x y \cdots c \cdots.
\]
\begin{enumerate}
\item Suppose that $x < y < b$. Since $x < y < c$, after the partial rotation we get a valid right star operation
\[
\cdots b \cdots x c y \cdots \leftrightsquigarrow \cdots b \cdots  c x y \cdots ,
\]
and a valid channel $C' \subset w^*$ whose $y$-coordinates are identical to $C$ so that the commutative diagram can be completed.
\item Suppose that $b < y < x$. If $c>y$ then $y$ and its translates can be inserted to $C$ to yield a longer stream in $w$, which is a contradiction. Thus $c<y<x$ and after the partial rotation we get a valid right star operation
\[
\cdots b \cdots x c y \cdots \leftrightsquigarrow \cdots b \cdots c x y \cdots,
\]
and a valid channel  $C' \subset w^*$ whose $y$-coordinates are identical to $C$ so that the commutative diagram can be completed.
\end{enumerate}
\item The right star operation looks like (here either side can be $w$):
\[
 \cdots y x b \cdots c \cdots \leftrightsquigarrow  \cdots y b x \cdots c \cdots.
\] 
\begin{enumerate}
\item Suppose that $x < y < b$. Since $x < y < c$, after the partial rotation we get a valid right star operation
\[
\cdots b \cdots yxc \cdots \leftrightsquigarrow \cdots b \cdots  ycx \cdots ,
\]
and a valid channel  $C' \subset w^*$ whose $y$-coordinates are identical to $C$ so that the commutative diagram can be completed.
\item Suppose that $b < y < x$. If $c>y$ then $y$ and its translates can be inserted to $C\tbr{1}$ to yield a longer stream in $\tilde{w}$, which is a contradiction. Thus $c<y<x$ and after the partial rotation we get a valid right star operation
\[
\cdots b \cdots yxc \cdots \leftrightsquigarrow \cdots b \cdots  ycx \cdots ,
\]
and a valid channel  $C' \subset w^*$ whose $y$-coordinates are identical to $C$ so that the commutative diagram can be completed.
\end{enumerate}
\item The right star operation looks like (here either side can be $w$):
\[
 \cdots y x b \cdots c \cdots \leftrightsquigarrow  \cdots x y b \cdots c \cdots.
\]
Without loss of generality we may assume that $x<y$, i.e. $x<b<y$. If $c> y$  then $y$ and its translates can be inserted to $C\tbr{1}$ to yield a longer stream in $\tilde{w}$, which is a contradiction. Thus $x<c<y$ and after the partial rotation we get a valid right star operation
\[
\cdots b \cdots y x c  \cdots \leftrightsquigarrow \cdots b \cdots x y c \cdots,
\]
and a valid channel  $C' \subset w^*$ whose $y$-coordinates are identical to $C$ so that the commutative diagram can be completed.
\item The right star operation looks like (here either side can be $w$):
\[
 \cdots by x \cdots c \cdots \leftrightsquigarrow  \cdots bx y \cdots c \cdots.
\]
Without loss of generality we may assume that $x<y$, i.e. $x<b<y$. If $c> y$  then $y$ and its translates can be inserted to $C$ to yield a longer stream in $w$, which is a contradiction. Thus $x<c<y$ and after the partial rotation we get a valid right star operation
\[
\cdots b \cdots cy x  \cdots \leftrightsquigarrow \cdots b \cdots cx y \cdots,
\]
and a valid channel  $C' \subset w^*$ whose $y$-coordinates are identical to $C$ so that the commutative diagram can be completed.
\end{enumerate}

\subsubsection*{\textbf{Case 2.}} 
Suppose that $\#([p-1, p+1] \cap X)=2$.
\begin{enumerate}
\item The right star operation looks like:
\[
w = \cdots a z b \cdots c \cdots \rightsquigarrow w^*=\cdots z a b \cdots c \cdots.
\]
Here we should have $a<b<z$. If $z < c$, then $z$ and its translates can be inserted to $C\tbr{1}$ to yield a longer stream in $\tilde{w}$, which is a contradiction. Thus $b<c<z$ and after the partial rotation we get a valid right star operation
\[
\tilde{w} = \cdots a\cdots b z c \cdots  \rightsquigarrow \tilde{w}^*=\cdots a\cdots z b c \cdots
\]
and a valid channel  $C' \subset w^*$ whose $y$-coordinates are identical to $C$ so that the commutative diagram can be completed.
\item The right star operation looks like:
\[
w = \cdots zab \cdots c \cdots \rightsquigarrow w^*=\cdots azb \cdots c \cdots.
\]
Here we should have $a<b<z$. 
\begin{enumerate}
\item If $c < z$, then after the partial rotation we get a valid right star operation
\[
\tilde{w} = \cdots a\cdots zbc \cdots  \rightsquigarrow \tilde{w}^*=\cdots a\cdots bzc \cdots
\]
and a valid channel  $C' \subset w^*$ whose $y$-coordinates are identical to $C$ so that the commutative diagram can be completed.
\item If $c > z$, then after the partial rotation we get a valid right star operation
\[
\tilde{w} = \cdots a\cdots zbc \cdots  \rightsquigarrow \tilde{w}^*=\cdots a\cdots zcb \cdots
\]
and a valid channel $C' = \ldots, a, z, c, \ldots\subset w^*$ (i.e. replacing $b$ with $z$ in $C$) so that the commutative diagram can be completed.
\end{enumerate}
\item The right star operation looks like (here either side can be $w$):
\[
\cdots axb \cdots c \cdots \leftrightsquigarrow  \cdots abx \cdots c \cdots.
\] 
Here we should have $x<a<b<c$. After the partial rotation we get a valid right star operation
\[
\cdots a \cdots bxc \cdots  \leftrightsquigarrow  \cdots a\cdots bcx \cdots
\] 
and a valid channel  $C' \subset w^*$ whose $y$-coordinates are identical to $C$ so that the commutative diagram can be completed.
\item The right star operation looks like:
\[
w= \cdots aby \cdots c \cdots \rightsquigarrow  w^*=\cdots bay \cdots c \cdots.
\] 
Here we should have $a<y<b<c$. After the partial rotation we get a valid right star operation
\[
\tilde{w}=\cdots a \cdots bcy \cdots  \rightsquigarrow  \tilde{w}^*=\cdots a\cdots byc \cdots
\] 
and a valid channel $C' = \ldots, a, y, c, \ldots \subset w^*$ (i.e. replacing $b$ with $y$ in $C$) so that the commutative diagram can be completed.
\item The right star operation looks like:
\[
w= \cdots yab \cdots c \cdots \rightsquigarrow  w^*=\cdots yba \cdots c \cdots.
\] 
Here we should have $a<y<b<c$. However, it is impossible since $y$ and its translates can be inserted to $C\tbr{1}$ to yield a longer stream in $\tilde{w}$.
\end{enumerate}

\subsubsection*{\textbf{Case 3.}} Suppose that $\#([p-1, p+1] \cap X)=3$. However, this is impossible since it means that $w(p-1)<w(p)<w(p+1)$ and the right star operation cannot be applied.
 
We exhausted all the possibilities and this finishes the proof of the Diamond lemma.

%

\section{Proof of Blasiak's conjecture}\label{sec:proof}
\subsection{Same left cell implies same $\fQ$}\label{sec:sameleft}
We are ready to prove the half of Theorem \ref{thm:main}, i.e. $\{w \in \tsc_\lambda \mid \sgn_\fQ(w)=\fQ\}$ is a union of left cells. To this end the following lemma is essential.
\begin{lem}\label{lem:asym} For $v, w \in \extSn$, let us write $v(i,j) = in+v(j)$ and $w(i,j) = in+w(j)$ where $i,j \in \bZ$. For $X \subset [1,n]$, suppose that 
\begin{enumerate}[label=\textnormal{(\alph*)}]
\item the relative orders of $\{v(i,x)\}_{i\in [0,n-1],x\in X}$ and $\{w(i,x)\}_{i\in [0,n-1],x\in X}$ are the same,  i.e. for any $i,i' \in [0,n-1],j, j' \in X$ we have $v(i,j)\leq v(i',j')$ if and only if $w(i,j)\leq w(i',j')$. 
\item $v(y)+n^2< v(x)$ and $w(y)+n^2< w(x)$ for $x \in X$ and $y \in [1,n]-X$, and
\item $\sgn_\fQ(v_{[1,n]-X})=\sgn_\fQ(w_{[1,n]-X})$.
\end{enumerate}Then $\sgn_\fQ(v)=\sgn_\fQ(w)$.
\end{lem}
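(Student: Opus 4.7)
The plan is to run the sign insertion algorithm on $v$ and on $w$ in parallel and produce a step-by-step coupling that forces the two $\fQ$-tableaux to agree. The starting point is the observation that sign insertion performs only order comparisons, and the only new numbers it ever produces are $n$-translates via bumping. Consequently $\sgn_\fQ(v)$ depends only on the relative order of the numbers $\{v(z)+kn : z\in[1,n],\ k\in[0,K]\}$, where $K$ is any uniform upper bound on how many times an individual value can be bumped. My first step would be to establish the bound $K\leq n-1$, so that all numbers ever appearing during the algorithm on $v$ lie within $\{v(j) : j\in[1,n^2]\}$, precisely the set whose relative ordering is controlled by hypothesis (a). Combined with (b), this instantly yields a stable dichotomy: any value $v(y)+kn$ with $y\in[1,n]-X$ (call it \emph{small}) satisfies
\[
v(y)+kn \;\leq\; v(y)+(n-1)n \;<\; v(y)+n^2 \;<\; v(x)
\]
for every $x\in X$, so every small entry lies strictly below every \emph{large} entry $v(x)+\ell n$; the analogous statement holds for $w$.

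Granted the dichotomy, I would decompose the state at each moment of the algorithm: write $\fP=\fP^{sm}\sqcup\fP^{lg}$ as a sorted concatenation, and split the processing queue similarly. My decoupling claim is that during the $v$-run the trajectory of $\fP^{sm}$ mirrors the $\fP$-evolution of the sign insertion applied to the partial permutation $v_{[1,n]-X}$ step for step, and analogously $\fP^{lg}$ is driven by a process depending only on the relative order of $\{v(i,x) : i\in[0,n-1],\ x\in X\}$. Hypothesis (c) then yields that the first-case/second-case pattern on the small side is identical for $v$ and $w$; hypothesis (a) together with the order-only observation gives the same agreement on the large side. Interleaving the two patterns, the step indices at which $\fQ$ acquires a new entry coincide for $v$ and $w$, hence $\sgn_\fQ(v)=\sgn_\fQ(w)$.

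The main obstacle is the decoupling claim itself, and in particular the edge case where $\fP^{sm}$ is momentarily empty when a small value is to be processed: naively that step bumps a large entry and contaminates the separation. I expect this can be handled by tracking the subsequent life of the displaced large entry: it receives a $+n$ shift, re-enters the queue as a large value, and (by the dichotomy and (a)) its downstream dynamics on the large side are again synchronized between $v$ and $w$; meanwhile the triggering event (an incoming small with empty $\fP^{sm}$) is determined entirely by the small-side state, which by (c) coincides in the two runs. I would formalize everything by induction on the step index, maintaining as invariants that (i) the small/large separation persists, (ii) the bump bound $K\leq n-1$ still holds, and (iii) the list of first-case step indices seen so far agrees for $v$ and $w$. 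Establishing the uniform bump bound itself is likely the other substantial technical step, perhaps most cleanly proved via the AMBC picture, where bumping events correspond to understandable moves on Shi posets; this is where I expect the most work to lie before the main coupling argument can be carried out.
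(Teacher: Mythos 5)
Your proposal is correct and follows essentially the same route as the paper's own (quite terse) proof: a uniform bound of $n-1$ on the number of bumps so that every value arising lies in $\{v(i,j)\}_{i\in[0,n-1]}$, the small/large dichotomy forced by (b), and a step-by-step coupling by induction in which (a) governs the large-side comparisons and (c) governs the small-side case pattern. In fact you make explicit the one real subtlety --- a small incoming entry bumping a large entry of $\fP$ when no small entry of $\fP$ exceeds it --- which the paper's sketch passes over in silence, and your treatment of it (the displaced large entry re-enters the queue as a large value whose subsequent dynamics are synchronized by (a), while the trigger is detected identically in both runs) is the right resolution.
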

\begin{proof} First suppose that $X=[1,n]$, so that the conditions (b) and (c) are vacuous. In this case, the statement directly follows from the definition of the sign insertion algorithm. More precisely, first note that the entries which appear in one of $\fP_i$ or $v_i$ (resp. $w_i$) in the sign insertion algorithm are contained in $\{v(i,j)\}_{i\in [0,n-1],j\in [1,n]}$ (resp. $\{w(i,j)\}_{i\in [0,n-1],j\in [1,n]}$) since an entry cannot be bumped more than $n$ times during the process. Thus, one can show by induction on the steps of sign insertion that indeed $\fQ_i$ are the same for $v$ and $w$, using the assumption that the relative orders of $\{v(i,j)\}_{i\in [0,n-1],j\in [1,n]}$ and $\{w(i,j)\}_{i\in [0,n-1],j\in [1,n]}$ are the same.

In general, condition (b) implies that $v(i,y) < v(i',x)$ and $w(i,y) < w(i',x)$ for any $i, i' \in [0,n-1], x \in X,$ and $y \in [1,n]-X$. In other words, $v(i',x)$ and $w(i',x)$ for $i'\in [0,n-1]$ and $x \in X$ behave as if they are very large compared to $v(i,y)$ and $w(i,y)$ for $i \in [0,n-1]$ and $y\in [1,n]-X$. Since condition (c) gives the equality of two $\fQ$'s when the sign insertion algorithm is applied to the ``smaller part'' of $v$ and $w$, respectively, similarly to above one can use induction on the steps of sign insertion to conclude the claim. 
\end{proof}

Let us prove that $\sgn_\fQ(v)=\sgn_\fQ(w)$ when $v$ and $w$ are in the same left cell indexed by $Q \in \RSYT(\lambda)$. By Lemma \ref{lem:sgnqpre}, we may assume that $v$ and $w$ are in the same right cell, say indexed by $P \in \RSYT(\lambda)$. Thus there exist $\vv{\phi}, \vv{\rho} \in \bZ^{l(\lambda)}$ such that $\Phi(v) = (P, Q, \vv{\phi})$ and $\Phi(w) = (P, Q, \vv{\rho})$. 



Let $m$ be the multiplicity of $\lambda_1$ in $\lambda$ and let $\fC_w=(C_1, \ldots, C_m)$ as before. Let $X_i =\{x\in [1,n] \mid (x, w(x)) \in C_i\}$. If we let $X_i = \{a_1, \ldots, a_k\}$ where $a_1<\cdots <a_k$ then by definition of the channel we have $w(a_1)<\cdots <w(a_k)<w(a_1)+n$. Moreover, since proper PR/IPR do not affect $\sgn_\fQ(w)$ and the left cell of $w$, by applying them repeatedly we may assume that $0\ll \rho_1\ll \cdots \ll \rho_m$, which results in 
$w(y)\ll w(x_1) \ll \cdots \ll w(x_m)$ for any $y \in [1,n] -\bigsqcup_{i=1}^m X_i$, $x_1 \in X_1$, $\ldots$, $x_m \in X_m$.


Let us recall the description of AMBC in terms of asymptotic realization, i.e. \cite[Section 7]{cpy18}. We observe the following statement.
\begin{lem} \label{lem:asympAMBC} For $w\in \extSn$ and $X \subset [1,n]$, suppose that $\{(x, w(x)) \mid x \in X+n\bZ\}$ is a channel of $w$. Also assume that $w(y) \ll w(x)$ for any $x \in X$ and $y \in [1,n]-X$. Then $\Phi(w) = (P , Q, \vv{\rho})$ for some $P$ and $\vv{\rho}$ if and only if  $Q_1=X$ (as sets) and $\Phi(w\su{[1,n]-X}) = (P', Q_{\geq 2}, \vv{\rho}')$ for some $P'$ and $\vv{\rho}'$.
\end{lem}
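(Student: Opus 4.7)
The plan is to deduce this statement as an instance of the asymptotic realization of AMBC established in \cite[Section 7]{cpy18}. The hypothesis that $w(y) \ll w(x)$ for $x \in X$ and $y \in [1,n] - X$ means that on the cylinder the balls of the channel $C$ sit very far to the northeast of all other balls, and consequently one step of the AMBC forward (or backward) algorithm decouples cleanly into a contribution from $C$ and a contribution from $w\su{[1,n]-X}$.

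As a first step I will analyze the Shi poset $P_w$ under the asymptotic hypothesis. For every $x \in X$ and $y \in [1,n]-X$ we have $w(x) > w(y) + n$, so by the second clause of the definition of $<_{P_w}$ we get $y <_{P_w} x$. Combined with the fact that $X$ itself is an antichain of size $|X|$ in $P_w$ (since $C$ is a stream of density $|X|$), this forces the Greene--Kleitman partition of $P_w$ to be obtained from that of $P_{w\su{[1,n]-X}}$ by adding an extra part of length $|X|$ at the top of the shape. In particular, the first row of any AMBC tableau for $w$ has length $|X|$, so the assertion ``$Q_1 = X$ as sets'' is dimensionally consistent, and the remaining shape matches that of $\Phi(w\su{[1,n]-X})$.

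For the forward direction I will compute $\st(w)$ and $\fw(w)$ explicitly under the asymptotic hypothesis. The southwest channel of $w$ must lie entirely in $w\su{[1,n]-X}$ (since any ball of $C$ is too far northeast to be part of a southwest channel), and each zigzag built from the southwest channel numbering then has a predictable shape: exactly one ball of $C$ at its northeast tip, with a zigzag of $w\su{[1,n]-X}$-balls trailing to the southwest. Reading off the southwest corners of these zigzags shows that $\st(w)$ is a stream whose relevant $n$-residues are exactly $X$, giving $Q_1 = X$, while $\fw(w)$ coincides, up to the canonical shift induced by the zigzag bookkeeping, with $w\su{[1,n]-X}$. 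Iterating $\Phi$ on $\fw(w)$ then produces $Q_{\geq 2}$ equal to the $Q$-tableau of $\Phi(w\su{[1,n]-X})$, proving the ``only if'' direction.

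The converse is handled in the same way using $\Psi$: starting from $\Phi(w\su{[1,n]-X}) = (P', Q_{\geq 2}, \vv{\rho}')$, applying $\Psi$ recovers $w\su{[1,n]-X}$, and a single backward step $\bk_{S_1}$, with $S_1$ supported on $X$ (via the asymptotic altitude $\rho_1$ forced by $C$), reinserts the $C$-balls at the top without disturbing the previously recovered balls below, producing the original $w$. The main obstacle will be the careful bookkeeping needed to match, in the forward step, the southwest corner of each zigzag with the corresponding residue of $X$, and, in the backward step, to verify that the chosen altitude $\rho_1$ is large enough that $\bk_{S_1}$ touches none of the balls of $w\su{[1,n]-X}$ already present, so that the one-step factorization of AMBC is exact rather than merely approximate.
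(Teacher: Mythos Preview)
Your plan cites the asymptotic realization of \cite[Section 7]{cpy18}, but then abandons it in favor of a direct analysis of $\st(w)$ and $\fw(w)$ via the southwest channel numbering. That direct analysis contains a real error. You assert that the southwest channel of $w$ must lie entirely inside $w\su{[1,n]-X}$, on the grounds that the balls of $C$ are ``too far northeast''. But the hypothesis $w(y)\ll w(x)$ only pushes the balls of $C$ far \emph{east}; their $x$-coordinates are still ordinary elements of $[1,n]$. More to the point, nothing forces $w\su{[1,n]-X}$ to contain \emph{any} stream of density $|X|$: if the width of $P_{w\su{[1,n]-X}}$ is strictly less than $|X|$ (for instance whenever $|X|>n/2$, or in the simple case $n=3$, $X=\{1,2\}$, $w=[M,M+1,3]$ with $M$ large), then $C$ itself is the unique channel of $w$ and hence is the southwest channel. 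Your subsequent picture of each zigzag --- one ball of $C$ at the northeast tip, a tail of $w\su{[1,n]-X}$-balls to the southwest --- then collapses, and with it the claimed identifications of $\st(w)$ and $\fw(w)$.

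The paper's proof takes a different route that sidesteps any analysis of channels or zigzags. It uses the fact (implicit in \cite[Section 7]{cpy18}, made explicit in \cite[proof of Theorem 10.3]{kp20}) that the tabloid $Q$ of $\Phi(w)$ equals, modulo $n$, the recording tableau of ordinary Robinson--Schensted applied to the infinite word $w(1),w(2),\ldots$. Under the asymptotic hypothesis, every entry $w(in+x)$ with $x\in X$ is larger than all earlier entries of the word, so it appends to the end of the first row and its recording position lands in row~1; since there are $|X|=\lambda_1$ such residues, this forces $Q_1=X$. For $y\notin X$ the bumping path of $w(in+y)$ through rows $\geq 2$ is identical to what one sees when inserting the word of $w\su{[1,n]-X}$, the only difference being an extra bump of a large element out of row~1; hence rows $\geq 2$ of the two recording tableaux agree, giving $Q_{\geq 2}$ equal to the $Q$-tabloid of $w\su{[1,n]-X}$. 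If you want to salvage your write-up, replace the zigzag discussion by this infinite-RSK argument, which is what the reference you cite actually provides.
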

\begin{proof} Here we prove the only if part, but its converse is proved similarly. Note that $Q$ is equal to the recording tableau modulo $n$ where the usual Robinson-Schensted algorithm is applied to an infinite sequence $(w(1), w(2), \ldots)$. (This is implicit in \cite[Section 7]{cpy18} and shown in the proof of \cite[Theorem 10.3]{kp20}.) The condition implies that $w(in+x)>w(y)$ for any $i \in \bZ$, $x\in X$, and $y\in [1,in+x-1]$. Thus when $w(in+x)$ is inserted to the infinite insertion tableau it is always located at the end of the first row. Thus it follows that the first row of $Q$ should contain all the elements in $X$. However, since $\#X$ is equal to the length of the first row of $Q_1$ because $\{(x, w(x)) \mid x \in X+n\bZ\}$ is a channel, it follows that $Q_1=X$.

On the other hand, the bumping process of $w(in+y)$ for $i\in \bN$ and $y \in [1,n]-X$ is almost the same as considering the partial permutation $w\su{[1,n]-X}$, except that at the end of each step it bumps some element of the form $w(jn+x)$ for $j \in \bN$ and $x \in X$. It means that the recording tableau of the infinite word corresponding to $w\su{[1,n]-X}$ is the same as that corresponding to $w$ after the first row is removed. Thus by taking residues modulo $n$ we obtain the second claim.
%
%
%
%
%
\end{proof}

In our situation, by iterating the above lemma we see that $X_i = Q_{m+1-i}$ for $i \in [1,m]$. The most important part of this observation is that $X_1, \ldots, X_m$ only depend on $Q$ whenever $0\ll \rho_1\ll \cdots \ll \rho_m$. More precisely, if we set $\vv{\rho}$ such that $\rho_i+N<\rho_{i+1}$ for $i\in [1,m-1]$ and $\rho_1>N$ for a sufficiently large $N$, then the set of $x$-coordinates of each disjoint channel $C_1, \ldots, C_m$ stabilizes as $N$ grows. Moreover, if we set $X = \bigsqcup_{i=1}^m X_i$, then $w(y)+n^2<w(x)$ for $x\in X$ and $y \in[1,n]-X$, and [$w(i,j) \leq w(i',j')$ for $i,i' \in [0,n-1]$ and $j,j' \in X$] if and only if either [$j, j' \in X_k$ and $j\leq j'$ for some $k \in [1,m]$] or [$j \in X_k$ and $j'\in X_{k'}$ for some $1\leq k<k'\leq m$].

Now assume that both $\vv{\rho}$ and $\vv{\phi}$ satisfy $0\ll \rho_1\ll \cdots \ll \rho_m$ and $0\ll\phi_1\ll \cdots \ll \phi_m$, which is again possible. By the argument above and Lemma \ref{lem:asym}, in order to prove $\sgn_{\fQ}(v) = \sgn_{\fQ}(w)$ it suffices to show $\sgn_{\fQ}(v_{[1,n]-X}) = \sgn_{\fQ}(w_{[1,n]-X})$ where $X = \bigsqcup_{i=1}^m X_i= \bigsqcup_{i=1}^m Q_i$. However, Lemma \ref{lem:asympAMBC} implies that the partial permutations $v\su{[1,n]-X}$ and $w\su{[1,n]-X}$ have the same $Q$ tableau under AMBC. Therefore, the result follows from induction on $n$.

\subsection{Same $\fQ$ and same two-sided cell implies same left cell}

Let us continue with the other half of Theorem \ref{thm:main}, i.e. here we prove that $\sgn_\fQ(w) \neq \sgn_\fQ(v)$ if $w$ and $v$ are in the same two-sided cell but in different left cells. 
Suppose that a partition $\lambda \vdash n$ of length $l=l(\lambda)$ and a tableau $T \in \RSYT(\lambda)$ is given. Let us write $L_i=\sum_{j=i+1}^l \lambda_j$ for $i \in [0,l]$ so that $L_{i-1}-L_{i}=\lambda_i$. For $N \in \bZ$, we consider an element $w_{T,N}\in \extSn$ where
\[ (w_{T,N})_{T_i} = \left(Nn(l-i)+L_{i}+1, Nn(l-i)+L_i+2, \ldots, Nn(l-i)+L_{i-1}\right).\]
Usually $N$ will be a sufficiently large natural number.

When $N \gg n>0$, it is not hard to show that $\sgn_\fP(w_{T,N})_{[L_{i}+1,L_{i-1}]}$ is obtained from $(w_{T,N})_{T_i}$ by applying partial rotations, i.e. there exists $m_i \in [0,n-1]$ such that $\sgn_\fP(w_{T,N})_{[L_{i}+1,L_{i-1}]}=\pr_{T_i}^{m_i}(w_{T,N})_{T_i}$. (This can be proved using induction on the steps of sign insertion.) On the other hand, let $C_i = \{(jn+x, jn+w_{T,N}(x)) \mid x \in T_i, j\in \bZ\}$ be a channel of $w_{T,N}$. Similarly, we regard $\sgn_\fP(w_{T,N})$ as an affine permutation and set $C'_i=\{(jn+x, jn+\sgn_\fP(w_{T,N})(x)) \mid x \in [L_{i}+1,L_{i-1}], j\in \bZ\}$ which becomes channel of $\sgn_\fP(w_{T,N})$. Then it is easy to show that when $0<k\ll N$ we have $\pr_{C'_i}^k(\sgn_\fP(w_{T,N}))=\sgn_\fP(\pr_{C_i}^{k}(w_{T,N}))$, i.e. the partial rotation ``commutes with'' $\sgn_\fP$. (This follows from a similar argument to the proof of Lemma \ref{lem:asym} when $X=[1,n]$.) Therefore, if we set $k_i = n\lambda_i-m_i>0$ for $i \in [1,l]$ and define $\tilde{w}_{T,N} = (\pr_{C_1}^{k_1}\circ \cdots \pr_{C_l}^{k_l})(w_{T,N})$ then it satisfies
\begin{align*}
\sgn_\fP(\tilde{w}_{T,N})_{[L_{i}+1,L_{i-1}]} = & \left(Nn(l-i)+n^2+L_{i-1}+1, \ldots, Nn(l-i)+n^2+L_{i}\right)
\\=&(w_{T,N})_{T_i}+(n^2, n^2, \ldots, n^2).
\end{align*}
(Note that $\sgn_\fP(\tilde{w}_{T,N})$ only depends on $\lambda$ and $N$ but not $T$.)
Moreover, as $N \gg 0$, by applying Lemma \ref{lem:asympAMBC} repeatedly we see that $\tilde{w}_{T,N}$ is contained in the left cell parametrized by $T$, i.e. $\Phi(\tilde{w}_{T,N}) = (P, T, \vv{\rho})$ for some $P$ and $\vv{\rho}$.
\begin{example} Let $T=((3,6,7,9),(4,8,10),(1,5),(2))\in \RSYT(4,3,2,1)$. Then we have
\[w_{T,10}=[102,1,307,204,103,308,309,205,310,206].\]
(Here, $N=10$ is sufficiently large for our purpose.) Direct calculation shows that 
\[\sgn_{\fP}(w_{T,10}) = (1, 103, 112, 206, 214, 215, 318, 319, 320, 327).\]
Here $m_1=5, m_2=2, m_3=1, m_4=0$. Thus if we set $\tilde{w}_{T,10} = (\pr_{T_1}^{35}\circ\pr_{T_2}^{28}\circ\pr_{T_3}^{19}\circ \pr_{T_l}^{10})(w_{T,10})=[193, 101, 390, 295, 202, 397, 398, 296, 399, 304]$ then we have
\[\sgn_\fP(\tilde{w}_{T,10})=[101, 202, 203, 304, 305, 306, 407, 408, 409, 410]\]
as desired. Furthermore, we have 
\ytableausetup{tabloids, boxsize=1.2em}
\[\Phi(\tilde{w}_{T,10}) = \left(\ytableaushort{136{10},259,48,7},\ytableaushort{3679,48{10},15,2}, \begin{pmatrix}155\\88\\39\\10\end{pmatrix}\right),\]
i.e. $\tilde{w}_{T,10}$ is contained in the left cell parametrized by $T$.
\end{example}

We are ready to prove the other half of Theorem \ref{thm:main}. For $w, v \in \extSn$, suppose that $w \in \Gamma_T$ and $v\in \Gamma_S$ where $S, T \in \RSYT(\lambda)$ for some $\lambda \vdash n$ but $S\neq T$. Here we prove that $\sgn_\fQ(w) \neq \sgn_\fQ(v)$. Since we already showed that the elements in the same left cell have the same image under $\sgn_\fQ$, it suffices to assume that $w=\tilde{w}_{T,N}$ and $v=\tilde{w}_{S,N}$ for sufficiently large $N$. 

By the argument above, we have $\sgn_\fP(\tilde{w}_{T,N})=\sgn_\fP(\tilde{w}_{S,N})$, i.e. $\sgn_\fP(\tilde{w}_{T,N})$ does not depend on $T$ but only on $\lambda$ once $N\gg 0$ is fixed. However, the map $w\mapsto (\sgn_{\fP}(w), \sgn_{\fQ}(w))$ is injective by the argument in \cite[p.2333]{bla11}. It follows that $\sgn_\fQ(\tilde{w}_{T,N})=\sgn_\fQ(\tilde{w}_{S,N})$ if and only if $\tilde{w}_{T,N}=\tilde{w}_{S,N}$ if and only if $T=S$. Therefore, we have $\sgn_\fQ(\tilde{w}_{T,N})\neq\sgn_\fQ(\tilde{w}_{S,N})$ as desired.

\subsection{Partial rotations and left cells} 
As a bi-product of our argument, we may connect the elements in the intersection of a right cell and a left cell by (inverse) partial rotation. To this end first we observe the following lemma.
\begin{lem} \label{lem:asymchan} For a partial permutation $w$ and $X \subset [1,n]$, suppose that $w_X$ is a channel of $w$ and $w(y)\ll w(x)$ for $x \in X$ and $y \in [1,n]-X$. Let $X'=\{n+1-x \mid x \in \im w_X \cap [1,n]\}$. If we let $\Phi(w) = (P, Q, -)$ and $\Phi(w_{[1,n]-X}) = (P', Q', -)$, then we have $P=\evac((X')\con \evac(P'))$ and $Q=(X) \con Q'$ where $\evac$ is the affine evacuation defined in \cite{cfkly}.
\end{lem}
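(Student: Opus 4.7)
The plan is to bootstrap the statement about $Q$ directly from Lemma \ref{lem:asympAMBC}, and then obtain the corresponding statement about $P$ by ``dualizing'' via the involution $w \mapsto \rot(w^{-1})$, using the compatibility of this involution with AMBC and affine evacuation established in \cite{cfkly}.

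First, the $Q$-part is immediate: the hypothesis that $w_X$ is a channel of $w$ with $w(y)\ll w(x)$ for $x\in X, y\in[1,n]-X$ is exactly the setup of Lemma \ref{lem:asympAMBC}, so $Q = (X)\con Q'$ with no further work.

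For the $P$-part, consider the partial permutation $v = \rot(w^{-1})=\rot\circ w^{-1}\circ \rot$. I would check that $v$ satisfies the hypothesis of Lemma \ref{lem:asympAMBC} with respect to the set $X'=\{n+1-y\mid y\in \im w_X\cap[1,n]\}$: since $w(x)$ is very large for $x\in X$, the preimages $w^{-1}(y)$ for $y\in \im w_X\cap[1,n]$ are very negative, so after conjugating by $\rot$ the values $v(j)$ for $j\in X'$ become very large while those for $j\in [1,n]-X'$ remain moderate. Similarly, the image of $w_X$ as a channel of $w$ translates into $v_{X'}$ being a channel of $v$. Applying Lemma \ref{lem:asympAMBC} to $v$ then gives that, writing $\Phi(v)=(\tilde P,\tilde Q,-)$, one has $\tilde Q_1 = X'$ and $\Phi(v\su{[1,n]-X'})=(\tilde P',\tilde Q_{\geq 2},-)$.

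Next, I would invoke the compatibility of AMBC with $\rot\circ (\cdot)^{-1}$ via affine evacuation established in \cite{cfkly}: for any $u\in\extSn$ with $\Phi(u)=(P_u,Q_u,-)$ one has $\Phi(\rot(u^{-1}))=(\evac(Q_u),\evac(P_u),-)$. (The essential point here is not the precise behavior of the $\vv{\rho}$-coordinate, which is irrelevant for the statement, but just the identification of the two tableaux.) Applied to $u=w$, this yields $\tilde P=\evac(Q)$ and $\tilde Q = \evac(P)$. Combining with $\tilde Q_1=X'$ gives $\evac(P)=(X')\con\tilde Q_{\geq 2}$, and applying the involution $\evac$ to both sides yields $P=\evac((X')\con\tilde Q_{\geq 2})$.

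Finally, I need to identify $\tilde Q_{\geq 2}$ with $\evac(P')$. For this I would observe that the partial permutation $v\su{[1,n]-X'}$ is nothing but $\rot((w_{[1,n]-X})^{-1})$, which follows by unwinding the definitions of $\rot$, inversion, and window-restriction, combined with the fact that the positions in $X'$ were precisely those whose images under $w$ lie in $X$. Applying the same $\cfkly$ identity to $u=w_{[1,n]-X}$ then gives that the $Q$-tableau of $\rot((w_{[1,n]-X})^{-1})$ equals $\evac(P')$, whence $\tilde Q_{\geq 2}=\evac(P')$ and the desired formula $P=\evac((X')\con\evac(P'))$ follows. The main obstacle I expect is checking that the precise identity $\Phi(\rot(u^{-1}))=(\evac Q_u,\evac P_u,-)$ from \cite{cfkly} is stated in the form needed and, secondarily, confirming that the restriction-to-partial-permutation operation commutes with $\rot\circ(\cdot)^{-1}$ in the way required.
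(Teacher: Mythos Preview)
Your proposal is correct and follows essentially the same approach as the paper: apply Lemma~\ref{lem:asympAMBC} directly for the $Q$-part, then dualize via $w\mapsto\rot(w^{-1})$, use the identity $\Phi(\rot(u^{-1}))=(\evac(Q_u),\evac(P_u),-)$ from \cite{cfkly}, and observe that $\rot(w^{-1})\su{[1,n]-X'}=\rot((w\su{[1,n]-X})^{-1})$ to identify $\tilde Q_{\geq 2}$ with $\evac(P')$. The paper's proof is simply a terser version of yours.
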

\begin{proof} By Lemma \ref{lem:asympAMBC} we have $X=Q_1$. Moreover, if we consider $\rot(w^{-1})$ and $X'$ instead then the conditions of Lemma \ref{lem:asympAMBC} is still valid and $X'$ is equal to the first row of $\evac(P)$ because $\Phi(\rot(w^{-1})) = (\evac(Q), \evac(P), -)$. Since $\rot(w^{-1})_{[1,n]-X'}=\rot((w_{[1,n]-X})^{-1})$, it follows that $\evac(P) = (X')\con \evac(P')$, i.e. $P=\evac((X')\con \evac(P'))$ as desired.
\end{proof}
The main statement of this section is as follows.
\begin{thm} Suppose that $P, Q \in \RSYT(\lambda)$ and $w, \tilde{w} \in (\lc_{P})^{-1} \cap \lc_Q$. Then there exists a sequence $w=w_0, w_1, \ldots, w_k=\tilde{w}$, which all lie in $(\lc_{P})^{-1} \cap \lc_Q$, such that for $i \in [1,k]$ either $w_i=\pr_{S_{i-1}}(w_{i-1})$ or $w_i=\pr^{-1}_{S_{i-1}}(w_{i-1})$ where $S_{i-1}$ is a stream (not necessarily a channel) of $w_{i-1}$.
\end{thm}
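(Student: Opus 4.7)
The plan is to translate the statement via AMBC into a combinatorial connectivity question about dominant weight vectors, then realize each elementary move by a proper PR/IPR. Since $w,\tilde w\in(\lc_P)^{-1}\cap\lc_Q$, we have $\Phi(w)=(P,Q,\vv\rho)$ and $\Phi(\tilde w)=(P,Q,\vv{\tilde\rho})$ with both weight vectors dominant with respect to $(P,Q)$. Writing $\vv\crho=\vv\rho-\offset_{P,Q}$, dominance amounts to $\crho_i\leq\crho_{i+1}$ whenever $\lambda_i=\lambda_{i+1}$, so $(\lc_P)^{-1}\cap\lc_Q$ is in bijection with the integer points of a product (one factor per maximal $\lambda$-block) of cones of weakly increasing integer sequences. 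The task reduces to connecting any two such $\vv\crho$ by steps $\pm\vv{e_r}$ that preserve dominance and are each realized on the current element by a proper PR or IPR on some stream.

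By an iterated application of Theorem~\ref{thm:rho} through the $\fw$-filtration of AMBC, a proper PR on the NE channel of a river of $\fw^k(w)$ pulls back through the successive $\bk$-operations to a PR on $w$ with respect to some (in general, non-channel) stream and changes a single coordinate of $\vv\rho$ by $+1$; likewise for IPRs on SW channels. By Lemma~\ref{lem:NESW}, each such step preserves both $\lc_Q$ and $(\lc_P)^{-1}$, so the intermediate elements remain in $(\lc_P)^{-1}\cap\lc_Q$. Using Lemma~\ref{lem:distch}, the index $r$ is admissible for $+\vv{e_r}$ (respectively $-\vv{e_r}$) while preserving dominance exactly when the corresponding channel is NE (resp.\ SW) of its river in the appropriate layer, so the admissible coordinatewise moves on $\vv\crho$ match exactly the proper PR/IPRs.

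It then remains to show that the set of dominant $\vv\crho$ is connected under admissible $\pm\vv{e_r}$ moves. This factors over $\lambda$-blocks, so it suffices to prove that any two weakly increasing integer sequences of length $\ell$ are connected by elementary $\pm\vv{e_r}$ moves preserving weak increase. I would induct on $\ell$: for $\ell=1$ the coordinate is unconstrained; for $\ell>1$, first freely adjust the last coordinate to its target (moves on the last index are unconditionally admissible since it is always at the end of a plateau), after which the remaining problem on the first $\ell-1$ coordinates has the same admissibility structure and the induction closes. The main obstacle I expect is the lift used in the second paragraph: Theorem~\ref{thm:rho} as formally stated treats only channels in $\fC_w$, so for coordinates $r$ beyond the top layer one must pull a PR/IPR on a river of $\fw^k(w)$ back to a PR/IPR on an explicit stream of $w$ via the $\bk$-operations from Section~\ref{sec:AMBC}, and verify that this lift both preserves cells and implements the claimed coordinate move on $\vv\rho$.
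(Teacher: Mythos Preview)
Your approach correctly identifies the target as connecting two dominant weight vectors by $\pm\vv{e_r}$ moves, and the connectivity argument for weakly increasing sequences within each $\lambda$-block is fine. The gap is exactly the one you flag: the lift of a proper PR/IPR on a river of $\fw^k(w)$ back through the $\bk$-operations to a PR/IPR on a stream of $w$. Theorem~\ref{thm:rho} and its proof via \cite[Proposition~16.39]{cpy18} establish the identity $\bk_{S}(\pr_D(v))=\pr_{D'}(\bk_S(v))$ only when $D$ and $D'$ are channels of the same density as $S$; once you cross from the top block into $\fw^m(w)$ (where the width drops from $\lambda_1$ to $\lambda_{m+1}$), the stream $S_m$ has density $\lambda_1$ while the channel you rotate in $v_m$ has density $\lambda_{m+1}$, and nothing in \cite{cpy18} shows that $\bk_{S_m}$ carries this rotation to a partial rotation along a stream of $v_{m-1}$. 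Your appeal to Lemma~\ref{lem:NESW} for cell preservation is also circular here, since that lemma concerns channels, and the only way to know the lifted operation preserves $(P,Q)$ is to already know it realizes $\vv\rho\mapsto\vv\rho\pm\vv{e_r}$.

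The paper avoids this lift entirely by a different induction. Rather than moving $\vv\rho$ coordinate by coordinate, it first applies proper PRs on the northeast channel to push into the asymptotic regime of Section~\ref{sec:sameleft}, where (by Lemma~\ref{lem:asympAMBC}) the restriction $w\su{Q_1}$ is a channel whose image is determined by $\evac(P)_1$ via Lemma~\ref{lem:asymchan}. Since $w\su{Q_1}$ and $\tilde w\su{Q_1}$ then have the same domain and the same image, further proper PRs on this channel align them. Once $w$ and $\tilde w$ agree on $Q_1$, the problem reduces to connecting $w\su{[1,n]-Q_1}$ and $\tilde w\su{[1,n]-Q_1}$; by Lemma~\ref{lem:asymchan} these share the same $P'$ and $Q'$, so induction on $l(\lambda)$ applies. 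The crucial point is that a PR/IPR on a stream of the sub-partial-permutation $w\su{[1,n]-Q_1}$ is literally a PR/IPR on the same stream viewed inside $w$ (it simply fixes the $Q_1$ coordinates), so no lift through $\bk$ is needed. This sidesteps precisely the obstacle that blocks your argument.
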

\begin{proof} Let $\Phi(w) = (P, Q, \vv{\rho})$ and $\Phi(\tilde{w}) = (P, Q, \vv{\varphi})$ for some $\vv{\rho}=(\rho_1, \rho_2, \ldots)$ and $\vv{\varphi}=(\varphi_1, \varphi_2, \ldots)$. By applying the proper partial rotations to the northeast channels of $w$ and $\tilde{w}$, respectively, we may assume that $\rho_1, \varphi_1 \gg 0$. Then by Lemma \ref{lem:asympAMBC}, we see that $w_{Q_1}$ and $\tilde{w}_{Q_1}$ are channels of $w$ and $\tilde{w}$, respectively. Moreover, Lemma \ref{lem:asymchan} implies that $\im w_{Q_1}= \im \tilde{w}_{Q_1}=\{n+1-x \mid x \in \evac(P)_1\}+n\bZ$. Therefore, by applying proper partial rotation we may assume that $w_{Q_1}=\tilde{w}_{Q_1}$ while $w(y)\ll w(x)$ and $\tilde{w}(y)\ll \tilde{w}(x)$ for $x \in X$ and $y \in [1,n]-X$. Now again by Lemma \ref{lem:asymchan}, it suffices to show that $w_{[1,n]-X}$ and $\tilde{w}_{[1,n]-X}$ are connected by a similar sequence described in the statement. Thus the claim follows from induction on the length of $P$ and $Q$.
\end{proof}




\section{Sign insertion and Lascoux-Sch\"utzenberger standardization} \label{sec:LS}
Let us set $\Upsilon_\fQ=\{T\in \RSYT(n) \mid \sgn_\fQ(w)=\fQ \textnormal{ for some } w \in \lc_T\}$. Then Theorem \ref{thm:main} implies that $\Upsilon_\fQ \cap \RSYT(\lambda)$ is a singleton if nonempty. Our goal in this section is to provide a relation between elements in $\Upsilon_{\fQ}$, necessarily in different shapes.

\subsection{Tableaux, crystal operators, and Robinson-Schensted algorithm}

For $\alpha \models n$ and a row-standard Young tabloid $T \in \RSYT(\alpha)$ we consider its associated two-row array $\cA(T)$ whose first row records $\textnormal{(length of $\alpha$)}+1-\textnormal{(row number)}$ and whose second row is $\rw(T)$. For example, if $T=((2),\emptyset,(3,5,6),(1,4))$ then $\cA(T)=\begin{pmatrix} 1&1&2&2&2&4\\1&4&3&5&6&2\end{pmatrix}$. Then we may apply the usual Robinson-Schensted-Knuth algorithm (see \cite[Chapter 7]{sta86} for more detail) to $\cA(T)$. Let us denote this map by $\RSK: \RSYT(\alpha) \rightarrow \bigsqcup_{\lambda \vdash n} \SYT(\lambda) \times \SSYT(\lambda, \alpha^{rev}): T \mapsto (\RSK_P(T), \RSK_Q(T))$. This is indeed a bijection.

Let $\Sinf$ be the set of permutations of $\bZ_{>0}$ of finite support. (In other words, $\Sinf = \bigcup_{n\geq 1}\Sym_n$.) Let $s_i \in \Sinf$ be the transposition swapping $i$ and $i+1$. For a composition $\alpha \models n$, we may regard it as an infinite sequence all of whose entries but finite are zero, say $\alpha=(\alpha_1, \alpha_2, \ldots)$. Then $s_i\cdot \alpha=(\alpha_{1}, \alpha_{2},\ldots, \alpha_{i-1}, \alpha_{i+1}, \alpha_i, \alpha_{i+2}, \ldots)$ is well-defined and there exists a bijection $s_i: \SSYT(n, \alpha) \rightarrow \SSYT(n, s_i \cdot \alpha)$ such that $\rw(T) \mapsto \rw(s_i(T))$ is a crystal reflection operator defined by Lascoux and Sch\"utzenberger \cite{ls81} (see also \cite[3.2]{sw00}). These operations are involutions and satisfy the braid relation, and thus these yield an action of $\Sinf$ on $\bigsqcup_{\alpha \models n} \SSYT(n, \alpha)$.

Moreover, there exists a bijection $R_i : \RSYT(\alpha) \rightarrow \RSYT(s_i \cdot \alpha)$, called a combinatorial $R$-matrix, which comes from an isomorphism of tensor products of one-row Kirillov-Reshetikhin crystals. (See \cite[4.8]{shi05} or \cite[Definition 3.11]{cfkly}.) These operations are also involutions and satisfy the braid relation, and thus these yield an action of $\Sinf$ on $\bigsqcup_{\alpha \models n}\RSYT(\alpha)$, denoted by $R_\sigma$ for $\sigma \in \Sinf$.
Then by \cite[Proposition 5.1]{shi05} (see also \cite[Proposition 3.21]{cfkly} and the footnote thereafter), we have $\RSK_P(R_i(T)) = \RSK_P(T) $ and $\RSK_Q(R_i(T))  = s_{l(T)-i}\RSK_Q(T)$. Therefore, if $\sigma \in \Sym_{l(T)}$ then $\RSK_P(R_\sigma(T)) = \RSK_P(T) $ and $\RSK_Q(R_\sigma(T))  = (w_0\sigma w_0)\RSK_Q(T)$ where $w_0$ is the longest element of $\Sym_{l(T)}$. 

\subsection{Lascoux-Sch\"utzenberger standardization}
We recall the standardization map of Lascoux and Sch\"utzenberger. Here we mainly follow the argument and notations from \cite{sw00}. Suppose that we are given two compositions $\alpha, \beta \models n$ whose rearrangements are partitions $\alpha^+ , \beta^+\vdash n$, respectively. Assume that $\alpha^+ \geq \beta^+$ with respect to dominance order. Then there exists an injective map $\theta_\alpha^\beta : \SSYT(n, \alpha) \rightarrow \SSYT(n, \beta)$ defined as follows.
\begin{enumerate}[label=\textbullet]
\item If $\alpha^+=\beta^+$ then choose any permutation $\sigma\in \Sinf$ so that $\sigma\cdot\alpha = \beta$ and let $\theta_\alpha^\beta=\sigma$.
\item If $\beta_i=\alpha_i$ for $i \geq 3$ and $\beta_1+1=\alpha_1>\beta_2=\alpha_2+1$, then $\theta_\alpha^\beta$ acts on a tableau by changing the rightmost letter 1 to 2 (which is always possible).
\item In general, there exists a sequence $\alpha=\alpha_0, \alpha_1, \ldots, \alpha_k = \beta$ where each $\alpha_i \rightarrow \alpha_{i+1}$ is either in the first or the second case. Then we define $\theta_\alpha^\beta=\theta^{\alpha_k}_{\alpha_{k-1}}\circ \cdots \circ \theta^{\alpha_1}_{\alpha_{0}}$.
\end{enumerate}
Then by \cite{las91} this map is well-defined (i.e. it does not depend on the choice of $\alpha=\alpha_0, \alpha_1, \ldots, \alpha_k = \beta$ on the last part) and also satisfies $\theta_{\beta}^{\gamma}\circ\theta_{\alpha}^{\beta}=\theta_\alpha^\gamma$.

For $\alpha \models n$, note that $\SSYT(n, \alpha)$ is endowed with a graded poset structure by cyclage where the grading is given by cocharge. By \cite{ls81} and \cite{las91}, the map $\theta_{\alpha}^\beta: \SSYT(n, \alpha)\rightarrow \SSYT(n, \beta)$ is indeed a grade-preserving poset embedding (see also \cite[Theorem 44, 45]{sw00}).

%
%
%

\subsection{Investigation of $\Upsilon_\fQ$}
The main result of this section is the following theorem.
\begin{thm} \label{thm:lsstd}Suppose that $\Upsilon_\fQ=\{T\in \RSYT(n) \mid \sgn_\fQ(w)=\fQ \textnormal{ for some } w \in \lc_T\}$ is nonempty. Then,
\begin{enumerate}[label=\textbullet]
\item $\RSK_P(\Upsilon_\fQ)$ consists of a single element. 
\item If $S, T \in \Upsilon_{\fQ}$ and $\sh(S)\geq \sh(T)$ then $\theta_{\sh(S)^{rev}}^{\sh(T)^{rev}}(\RSK_Q(S))=\RSK_Q(T)$. Moreover, if $T \in \Upsilon_{\fQ}$ and $\sh(T) \geq \lambda$ for some partition $\lambda \vdash n$ then $\theta_{\sh(T)^{rev}}^{\lambda^{rev}}(\RSK_Q(T)) \in \RSK_Q(\Upsilon_{\fQ})$. In other words, $\RSK_Q(\Upsilon_{\fQ})$ is ``stable under $\theta$''.
\end{enumerate}
\end{thm}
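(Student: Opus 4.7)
The strategy is to reduce to a concrete calculation using the explicit affine permutations $\tilde{w}_{T,N}$ constructed in Section~\ref{sec:proof}, and then identify the Lascoux--Sch\"utzenberger standardization by comparing across different shapes. For each $T \in \Upsilon_\fQ \cap \RSYT(\lambda)$ and sufficiently large $N$, set $w_T := \tilde{w}_{T,N}$; by construction $w_T \in \lc_T$, and since by Theorem~\ref{thm:main} all elements of $\lc_T$ share the same $\sgn_\fQ$-value, we have $\sgn_\fQ(w_T) = \fQ$, while $\sgn_\fP(w_T)$ depends only on $(\lambda, N)$. The $n$ window entries of $w_T$ partition by magnitude into $l = l(\lambda)$ ``buckets'' of sizes $\lambda_1, \ldots, \lambda_l$, with inter-bucket gaps of order $Nn$ and intra-bucket spacings of order $n$. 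The central claim is that in this large-$N$ regime, sign insertion on $w_T$ executes finite-type RSK on the two-line array $\cA(T)$.

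For Part~(i), I plan to trace the sign-insertion cascade on $w_T$ by induction on bucket index (from the smallest bucket upward). Each insertion of a bucket-$i$ entry either appends to $\fP$ (a ``no-bump'' step recorded in $\fQ$) or displaces the smallest bucket-$(i-1)$ entry currently in $\fP$; the displaced value is re-enqueued with offset $+n$ and processed at a later stage, eventually settling in the bucket-$(i-1)$ region of the final $\fP$. A careful induction should show that this cascade is combinatorially identical to ordinary RSK row-insertion applied to the second row of $\cA(T)$, with the first-row labels tracking bucket indices. Consequently the positions of the no-bump steps in $\fQ$, organized by bucket, recover both the shape and the content of $\RSK_P(T)$. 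Since $\sgn_\fP(w_T)$ depends only on $\lambda$ and $\sgn_\fQ(w_T) = \fQ$ is fixed, varying $T$ across $\Upsilon_\fQ$ cannot alter $\RSK_P(T)$; hence Part~(i) follows.

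For Part~(ii), I decompose $\theta_{\sh(S)^{rev}}^{\sh(T)^{rev}}$ into elementary moves of two kinds: (a) crystal reflections $s_i$ permuting the content within a fixed partition shape, and (b) ``rightmost-$1$-into-$2$'' moves that adjust the content by $-e_1+e_2$ when $\alpha_1 - \alpha_2 \geq 2$ and hence change the underlying partition. For~(a), Section~\ref{sec:LS} provides $\RSK_P(R_i(T)) = \RSK_P(T)$ and $\RSK_Q(R_i(T)) = s_{l(T)-i}\,\RSK_Q(T)$ for the combinatorial $R$-matrix $R_i$; I will verify that $R_i$ preserves $\sgn_\fQ$ by realizing its action at the level of $w_T$ through proper partial rotations (Theorem~\ref{thm:ppr}) and multiplication by $\shift$ on the right (Lemma~\ref{lem:sgnqpre}), both of which preserve $\sgn_\fQ$. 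For~(b), I will use the bucket analysis together with Lemma~\ref{lem:asym} to show that merging two adjacent buckets in $w_T$ yields $w_{T',N}$ for a $T' \in \Upsilon_\fQ$ whose shape is obtained from $\sh(T)$ by the corresponding box-move. Chaining elementary moves then yields both the identity $\theta_{\sh(S)^{rev}}^{\sh(T)^{rev}}(\RSK_Q(S)) = \RSK_Q(T)$ and the stability of $\RSK_Q(\Upsilon_\fQ)$ under $\theta$.

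The hardest step will be the inductive, bucket-by-bucket identification of sign insertion on $w_T$ with ordinary RSK on $\cA(T)$: because bumped values are re-enqueued at later positions with $+n$ offset, a direct position-by-position argument is unavailable, and one must work bucket-by-bucket while carefully tracking bumps that cross bucket boundaries and the order in which they re-appear in the queue. A secondary obstacle is the ``1-into-$2$'' merging move, which crosses two-sided cells and therefore does not reduce to cell-preserving operations on $w_T$; its compatibility with $\sgn_\fQ$ must be argued via the asymptotic independence of $\fQ$ from intra-bucket perturbations, in the spirit of Lemma~\ref{lem:asym} and the analysis in Section~\ref{sec:sameleft}.
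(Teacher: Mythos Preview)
Your overall architecture---decompose $\theta$ into crystal-reflection moves and single-box moves, and track them through the explicit elements $\tilde w_{T,N}$---matches the paper's. But the deduction of Part~(i) has a genuine gap. You propose to read $\RSK_P(T)$ off of $\fQ$ ``organized by bucket'' and then conclude constancy because $\fQ$ is fixed across $\Upsilon_\fQ$. The problem is that the bucket structure is determined by $\sh(T)$, and different $T\in\Upsilon_\fQ$ have different shapes; so equality of $\fQ$ does not by itself force equality of $\RSK_P(T)$ unless you already know how the bucket decompositions of the \emph{same} $\fQ$ for \emph{different} shapes are compatible---which is exactly Part~(ii). The paper avoids this circularity by not attempting any direct identification of sign insertion with RSK. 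Instead it proves both parts at once: Lemmas~\ref{lem:Rmat} and~\ref{lem:f1} show that every $T\in\Upsilon_\fQ$ is connected, via a chain of tabloids all lying in $\Upsilon_\fQ$ and all sharing the same $\RSK_P$, to some $T'\in\Upsilon_\fQ\cap\RSYT(1^n)$ with $\RSK_Q(T')=\theta_{\sh(T)^{\rev}}^{(1^n)}(\RSK_Q(T))$; then Theorem~\ref{thm:main} gives $|\Upsilon_\fQ\cap\RSYT(1^n)|=1$, from which both assertions follow.

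Your plan for move~(a) also will not work as stated. The map $w_{T,N}\mapsto w_{R_i(T),N}$ cannot in general be realised by proper partial rotations and right multiplication by $\shift$: those operations fix the $P$-tabloid under AMBC, whereas $w_{T,N}$ and $w_{R_i(T),N}$ have different $P$-tabloids (their channels sit at different $x$-coordinates). The paper's Lemma~\ref{lem:Rmat} instead reduces via Lemma~\ref{lem:asym} to the two-row case and then invokes the affine evacuation of \cite{cfkly} to show the two elements lie in the same left cell; Theorem~\ref{thm:main} then gives equal $\sgn_\fQ$. For move~(b), the paper's Lemma~\ref{lem:f1} handles the specific situation of splitting one box off the bottom row into an empty row just above it (not a ``merge''), and its proof again routes through an $R$-matrix and Lemma~\ref{lem:asym} rather than a direct asymptotic bucket analysis.
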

As a corollary we obtain the following.
\begin{cor} For each $n \in \bZ_{>0}$, $\{\sgn_\fQ(w) \mid w \in \extSn\}$ is canonically bijective with $\RSYT(1^n)$.
\end{cor}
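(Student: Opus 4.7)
The plan is to exhibit a canonical bijection
\[
\Psi: \{\sgn_\fQ(w) \mid w \in \extSn\} \longrightarrow \RSYT(1^n), \qquad \fQ \longmapsto (\text{the unique element of } \Upsilon_\fQ \cap \RSYT(1^n)),
\]
and the whole corollary will be a bookkeeping consequence of Theorem \ref{thm:main} and Theorem \ref{thm:lsstd}. First I would verify that $\Psi$ is well-defined. Uniqueness of the element of $\Upsilon_\fQ$ of shape $(1^n)$ is immediate from Theorem \ref{thm:main}, which forces $|\Upsilon_\fQ \cap \RSYT(\lambda)| \leq 1$ for every $\lambda \vdash n$. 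For existence, I would apply the second bullet of Theorem \ref{thm:lsstd} with $\lambda = (1^n)$: since $\Upsilon_\fQ$ is nonempty (because $\fQ$ lies in the image), picking any $T \in \Upsilon_\fQ$ and observing that $\sh(T) \geq (1^n)$ is automatic (as $(1^n)$ is the minimum partition of $n$ in dominance order) yields $\theta_{\sh(T)^{rev}}^{(1^n)^{rev}}(\RSK_Q(T)) \in \RSK_Q(\Upsilon_\fQ)$; the corresponding $T' \in \Upsilon_\fQ$ must have shape $(1^n)$ because $\RSK_Q$ sends shape $\alpha$ to content $\alpha^{rev}$ and the content in the target of $\theta$ is $(1^n)$.

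Injectivity of $\Psi$ will then follow from the ``same left cell implies same $\sgn_\fQ$'' direction of Theorem \ref{thm:main} (as established in Section \ref{sec:sameleft}): if $\Psi(\fQ) = \Psi(\fQ') = T$, then $T \in \Upsilon_\fQ \cap \Upsilon_{\fQ'}$, so both $\fQ$ and $\fQ'$ arise as $\sgn_\fQ$ applied to some element of the single left cell $\lc_T$, forcing $\fQ = \fQ'$. For surjectivity, given any $T \in \RSYT(1^n)$, the left cell $\lc_T \subseteq \tsc_{(1^n)}$ is nonempty by the parametrization recalled in Section \ref{sec:cells}; choosing any $w \in \lc_T$ and setting $\fQ := \sgn_\fQ(w)$ yields $T \in \Upsilon_\fQ \cap \RSYT(1^n)$, hence $\Psi(\fQ) = T$.

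I do not anticipate any serious obstacle, since the heavy lifting has been done in Theorems \ref{thm:main} and \ref{thm:lsstd}; the one point requiring attention is precisely that the $\theta$-stability of $\RSK_Q(\Upsilon_\fQ)$ in Theorem \ref{thm:lsstd}(2b) can be invoked all the way down to the bottom of the dominance order, which works because $(1^n)$ is the unique minimum among partitions of $n$.
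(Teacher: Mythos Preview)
Your proposal is correct and takes essentially the same approach as the paper: both arguments deduce the corollary directly from Theorem~\ref{thm:main} (which gives $|\Upsilon_\fQ \cap \RSYT(\lambda)|\leq 1$ and that $\sgn_\fQ$ is constant on left cells) together with the $\theta$-stability clause of Theorem~\ref{thm:lsstd} applied with target $(1^n)$ to produce an element of $\Upsilon_\fQ$ of shape $(1^n)$. The only cosmetic difference is that the paper writes down the forward map $\RSYT(1^n)\to\{\sgn_\fQ(w)\}$, $T\mapsto \sgn_\fQ(v_T)$, and checks surjectivity via Theorem~\ref{thm:lsstd}, whereas you write down its inverse $\Psi$ and check well-definedness via the same theorem; the content is identical.
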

\begin{proof} If we define $\RSYT(1^n) \rightarrow \{\sgn_\fQ(w) \mid w \in \extSn\} : T \mapsto \sgn_\fQ(v_T)$ where $v_T$ is any element in $\lc_T$, then this map is well-defined and injective by Theorem \ref{thm:main}. We claim that this is also surjective. Indeed, suppose that $\Upsilon_\fQ \neq \emptyset$ and $T\in \Upsilon_\fQ$. Then Theorem \ref{thm:lsstd} implies that $\RSK_P(\Upsilon_\fQ) = \{\RSK_P(T)\}$ and $\RSK_Q(\Upsilon_\fQ) \owns \theta_{\sh(T)^{rev}}^{(1^n)}(\RSK_Q(T))$. Now if we let $S \in \RSYT(1^n)$ be the preimage of $(\RSK_P(T), \theta_{\sh(T)^{rev}}^{(1^n)}(\RSK_Q(T)))$ under RSK then we see that $\Upsilon_\fQ \owns S$ as desired.
\end{proof}
\begin{example} When $n=4$, the sets $\Upsilon_\fQ$ are as follows.
\begin{align*}
\Upsilon_{(1, 2, 3, 4)}&=(((1, 2, 3, 4)),((2, 3, 4), (1)),((3, 4), (1, 2)),((3, 4), (2), (1)),((4), (3), (2), (1)))\allowdisplaybreaks\\
\Upsilon_{(1, 2, 3, 5)}&=(((1, 2, 3), (4)),((2, 3), (1, 4)),((2, 3), (4), (1)),((3), (4), (2), (1)))\allowdisplaybreaks\\
\Upsilon_{(1, 2, 4, 5)}&=(((1, 2, 4), (3)),((2, 4), (1, 3)),((2, 4), (3), (1)),((2), (4), (3), (1)))\allowdisplaybreaks\\
\Upsilon_{(1, 3, 4, 5)}&=(((1, 3, 4), (2)),((1, 4), (2, 3)),((1, 4), (3), (2)),((1), (4), (3), (2)))\allowdisplaybreaks\\
\Upsilon_{(1, 3, 5, 6)}&=(((1, 3), (2, 4)),((1, 3), (4), (2)),((3), (1), (4), (2)))\allowdisplaybreaks\\
\Upsilon_{(1, 2, 5, 6)}&=(((1, 2), (3, 4)),((1, 2), (4), (3)),((2), (1), (4), (3)))\allowdisplaybreaks\\
\Upsilon_{(1, 4, 5, 7)}&=(((1, 4), (2), (3)),((1), (2), (4), (3)))\allowdisplaybreaks\\
\Upsilon_{(1, 3, 5, 7)}&=(((1, 3), (2), (4)),((1), (3), (4), (2)))\allowdisplaybreaks\\
\Upsilon_{(1, 2, 5, 7)}&=(((1, 2), (3), (4)),((2), (3), (4), (1)))\allowdisplaybreaks\\
\Upsilon_{(1, 2, 3, 6)}&=(((2, 3), (1), (4)),((3), (2), (4), (1)))\allowdisplaybreaks\\
\Upsilon_{(1, 2, 4, 6)}&=(((2, 4), (1), (3)),((4), (2), (3), (1)))\allowdisplaybreaks\\
\Upsilon_{(1, 3, 4, 6)}&=(((3, 4), (1), (2)),((4), (1), (3), (2)))\allowdisplaybreaks\\
\Upsilon_{(1, 4, 7, 9)}&=(((4), (1), (2), (3)))
 \qquad\qquad\qquad \Upsilon_{(1, 3, 7, 9)}=(((3), (1), (2), (4)))\allowdisplaybreaks\\
\Upsilon_{(1, 2, 7, 9)}&=(((2), (1), (3), (4)))
\qquad\qquad\qquad \Upsilon_{(1, 2, 3, 7)}=(((3), (2), (1), (4)))\allowdisplaybreaks\\
\Upsilon_{(1, 2, 4, 7)}&=(((4), (2), (1), (3)))
\qquad\qquad\qquad \Upsilon_{(1, 3, 4, 7)}=(((4), (3), (1), (2)))\allowdisplaybreaks\\
\Upsilon_{(1, 3, 6, 8)}&=(((3), (4), (1), (2)))
\qquad\qquad\qquad \Upsilon_{(1, 2, 6, 8)}=(((2), (4), (1), (3)))\allowdisplaybreaks\\
\Upsilon_{(1, 4, 5, 8)}&=(((1), (4), (2), (3)))
\qquad\qquad\qquad \Upsilon_{(1, 5, 8, 10)}=(((1), (2), (3), (4)))\allowdisplaybreaks\\
\Upsilon_{(1, 3, 5, 8)}&=(((1), (3), (2), (4)))
\qquad\qquad\qquad \Upsilon_{(1, 2, 5, 8)}=(((2), (3), (1), (4)))
\end{align*}
If we apply the RSK to the elements in each set, then we get the following.
\begin{align*}
\RSK_P(\Upsilon_{(1, 2, 3, 4)})&=\{((1, 2, 3, 4))\}
&\RSK_P(\Upsilon_{(1, 2, 3, 5)})&=\{((1, 2, 3), (4))\}\allowdisplaybreaks\\
\RSK_P(\Upsilon_{(1, 2, 4, 5)})&=\{((1, 2, 4), (3))\}
&\RSK_P(\Upsilon_{(1, 3, 4, 5)})&=\{((1, 3, 4), (2))\}\allowdisplaybreaks\\
\RSK_P(\Upsilon_{(1, 3, 5, 6)})&=\{((1, 3), (2, 4))\}
&\RSK_P(\Upsilon_{(1, 2, 5, 6)})&=\{((1, 2), (3, 4))\}\allowdisplaybreaks\\
\RSK_P(\Upsilon_{(1, 4, 5, 7)})&=\{((1, 4), (2), (3))\}
&\RSK_P(\Upsilon_{(1, 3, 5, 7)})&=\{((1, 3), (2), (4))\}\allowdisplaybreaks\\
\RSK_P(\Upsilon_{(1, 2, 5, 7)})&=\{((1, 2), (3), (4))\}
&\RSK_P(\Upsilon_{(1, 2, 3, 6)})&=\{((1, 2, 3), (4))\}\allowdisplaybreaks\\
\RSK_P(\Upsilon_{(1, 2, 4, 6)})&=\{((1, 2, 4), (3))\}
&\RSK_P(\Upsilon_{(1, 3, 4, 6)})&=\{((1, 3, 4), (2))\}\allowdisplaybreaks\\
\RSK_P(\Upsilon_{(1, 4, 7, 9)})&=\{((1, 4), (2), (3))\}
&\RSK_P(\Upsilon_{(1, 3, 7, 9)})&=\{((1, 3), (2), (4))\}\allowdisplaybreaks\\
\RSK_P(\Upsilon_{(1, 2, 7, 9)})&=\{((1, 2), (3), (4))\}
&\RSK_P(\Upsilon_{(1, 2, 3, 7)})&=\{((1, 2, 3), (4))\}\allowdisplaybreaks\\
\RSK_P(\Upsilon_{(1, 2, 4, 7)})&=\{((1, 2, 4), (3))\}
&\RSK_P(\Upsilon_{(1, 3, 4, 7)})&=\{((1, 3, 4), (2))\}\allowdisplaybreaks\\
\RSK_P(\Upsilon_{(1, 3, 6, 8)})&=\{((1, 3), (2, 4))\}
&\RSK_P(\Upsilon_{(1, 2, 6, 8)})&=\{((1, 2), (3, 4))\}\allowdisplaybreaks\\
\RSK_P(\Upsilon_{(1, 4, 5, 8)})&=\{((1, 4), (2), (3))\}
&\RSK_P(\Upsilon_{(1, 5, 8, 10)})&=\{((1), (2), (3), (4))\}\allowdisplaybreaks\\
\RSK_P(\Upsilon_{(1, 3, 5, 8)})&=\{((1, 3), (2), (4))\}
&\RSK_P(\Upsilon_{(1, 2, 5, 8)})&=\{((1, 2), (3), (4))\}
\end{align*}
\begin{align*}
\RSK_Q(\Upsilon_{(1, 2, 3, 4)})&=\{((1, 1, 1, 1)),((1, 2, 2, 2)),((1, 1, 2, 2)),((1, 2, 3, 3)),((1, 2, 3, 4))\}\allowdisplaybreaks\\
\RSK_Q(\Upsilon_{(1, 2, 3, 5)})&=\{((1, 2, 2), (2)),((1, 1, 2), (2)),((1, 2, 3), (3)),((1, 2, 3), (4))\}\allowdisplaybreaks\\
\RSK_Q(\Upsilon_{(1, 2, 4, 5)})&=\{((1, 2, 2), (2)),((1, 1, 2), (2)),((1, 2, 3), (3)),((1, 2, 3), (4))\}\allowdisplaybreaks\\
\RSK_Q(\Upsilon_{(1, 3, 4, 5)})&=\{((1, 2, 2), (2)),((1, 1, 2), (2)),((1, 2, 3), (3)),((1, 2, 3), (4))\}\allowdisplaybreaks\\
\RSK_Q(\Upsilon_{(1, 3, 5, 6)})&=\{((1, 1), (2, 2)),((1, 2), (3, 3)),((1, 2), (3, 4))\}\allowdisplaybreaks\\
\RSK_Q(\Upsilon_{(1, 2, 5, 6)})&=\{((1, 1), (2, 2)),((1, 2), (3, 3)),((1, 2), (3, 4))\}\allowdisplaybreaks\\
\RSK_Q(\Upsilon_{(1, 4, 5, 7)})&=\{((1, 3), (2), (3)),((1, 2), (3), (4))\}\allowdisplaybreaks\\
\RSK_Q(\Upsilon_{(1, 3, 5, 7)})&=\{((1, 3), (2), (3)),((1, 2), (3), (4))\}\allowdisplaybreaks\\
\RSK_Q(\Upsilon_{(1, 2, 5, 7)})&=\{((1, 3), (2), (3)),((1, 2), (3), (4))\}\allowdisplaybreaks\\
\RSK_Q(\Upsilon_{(1, 2, 3, 6)})&=\{((1, 3, 3), (2)),((1, 2, 4), (3))\}\allowdisplaybreaks\\
\RSK_Q(\Upsilon_{(1, 2, 4, 6)})&=\{((1, 3, 3), (2)),((1, 2, 4), (3))\}\allowdisplaybreaks\\
\RSK_Q(\Upsilon_{(1, 3, 4, 6)})&=\{((1, 3, 3), (2)),((1, 2, 4), (3))\}\allowdisplaybreaks\\
\RSK_Q(\Upsilon_{(1, 4, 7, 9)})&=\{((1, 4), (2), (3))\}
\qquad\qquad\qquad \RSK_Q(\Upsilon_{(1, 3, 7, 9)})=\{((1, 4), (2), (3))\}\allowdisplaybreaks\\
\RSK_Q(\Upsilon_{(1, 2, 7, 9)})&=\{((1, 4), (2), (3))\}
\qquad\qquad\qquad \RSK_Q(\Upsilon_{(1, 2, 3, 7)})=\{((1, 3, 4), (2))\}\allowdisplaybreaks\\
\RSK_Q(\Upsilon_{(1, 2, 4, 7)})&=\{((1, 3, 4), (2))\}
\qquad\qquad\qquad \RSK_Q(\Upsilon_{(1, 3, 4, 7)})=\{((1, 3, 4), (2))\}\allowdisplaybreaks\\
\RSK_Q(\Upsilon_{(1, 3, 6, 8)})&=\{((1, 3), (2, 4))\}
\qquad\qquad\qquad \RSK_Q(\Upsilon_{(1, 2, 6, 8)})=\{((1, 3), (2, 4))\}\allowdisplaybreaks\\
\RSK_Q(\Upsilon_{(1, 4, 5, 8)})&=\{((1, 3), (2), (4))\}
\qquad\qquad\qquad \RSK_Q(\Upsilon_{(1, 5, 8, 10)})=\{((1), (2), (3), (4))\}\allowdisplaybreaks\\
\RSK_Q(\Upsilon_{(1, 3, 5, 8)})&=\{((1, 3), (2), (4))\}
\qquad\qquad\qquad \RSK_Q(\Upsilon_{(1, 2, 5, 8)})=\{((1, 3), (2), (4))\}
\end{align*}
Here $\RSK_P(\Upsilon_{\fQ})$ is a singleton and the elements in each $\RSK_Q(\Upsilon_{\fQ})$ are connected by the standardization map $\theta$. For example, for $\RSK_Q(\Upsilon_{(1,2,3,6)})$ we have
\ytableausetup{notabloids}
\[
\ytableaushort{133,2} \xmapsto{\theta_{(1,1,2)}^{(2,0,1,1)}}\ytableaushort{114,3}  \xmapsto{\theta_{(2,1,1)}^{(1,1,1,1)}}\ytableaushort{124,3}.
\]
\end{example}

For the proof of Theorem \ref{thm:lsstd}, we generalize the definition of $w_{T,N}$ to the case when $T$ is a row-standard Young tabloid that is not necessarily a tableau. The formula is identical; for a composition $\alpha=\sh(T)$ of length $l=l(\alpha)$ let us write $L_i=\sum_{j=i+1}^l \alpha_j$ for $i \in [0,l]$ so that $L_{i-1}-L_{i}=\alpha_i$. Then for $N \in \bZ$ we set $w_{T,N}\in \extSn$ so that
\[ (w_{T,N})_{T_i} = \left(Nn(l-i)+L_{i}+1, Nn(l-i)+L_i+2, \ldots, Nn(l-i)+L_{i-1}\right).\]
We claim the following.
\begin{lem}\label{lem:Rmat}
Suppose that $S$ and $T$ are row-standard Young tabloids such that $S=R_t(T)$ for some combinatorial $R$-matrix $R_t$. If $N \gg 0$ then $w_{S,N}$ and $w_{T,N}$ are in the same left cell.
\end{lem}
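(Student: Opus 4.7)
By Theorem \ref{thm:main}, together with the observation that $w_{T,N}$ and $w_{S,N}$ both lie in the two-sided cell $\tsc_{\sh(T)^+}$ (the combinatorial $R$-matrix $R_t$ preserves the multiset of row lengths, so $\sh(S)^+ = \sh(T)^+$), it suffices to verify $\sgn_{\fQ}(w_{T,N}) = \sgn_{\fQ}(w_{S,N})$. This will immediately yield that $w_{T,N}$ and $w_{S,N}$ belong to the same left cell.

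My first step is to reduce to the case where $T$ has only the two rows $T_t$ and $T_{t+1}$. Since $R_t$ acts trivially on rows other than $t$ and $t+1$, the window notations of $w_{T,N}$ and $w_{S,N}$ agree on every position outside $T_t \cup T_{t+1}$. When $N \gg 0$, the value blocks belonging to different rows are well separated, so the asymptotic analysis developed in Section \ref{sec:sameleft} --- chiefly Lemma \ref{lem:asym} in conjunction with Lemma \ref{lem:asympAMBC} --- applies iteratively. Each application peels off one of the rows $T_i$ with $i \neq t, t+1$, handling upper rows directly and treating lower rows via a symmetric device (for instance by inversion, or by the $\rot$ involution on $\extSn$). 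Since the rows being peeled are identical in $T$ and $S$, they contribute the same data to $\sgn_{\fQ}$ on both sides. After every such row has been removed the problem collapses to a two-row comparison.

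In the two-row case, $\rw(T) = T_{t+1} \con T_t$ and $\rw(S) = S_{t+1} \con S_t$ share the same RSK insertion tableau because $\RSK_P(R_t(T)) = \RSK_P(T)$ by \cite[Proposition 5.1]{shi05} (cited in the text). The central claim I intend to establish is that, for $N \gg 0$, $\sgn_{\fQ}(w_{T,N})$ depends only on $\RSK_P(\rw(T))$. This should follow from a careful trace of the bumping dynamics of the sign insertion algorithm: because of the huge gap between the values in $T_t$ and those in $T_{t+1}$, every element of $T_t$ (the large-value row) that enters $\fP$ remains there until all required elements of $T_{t+1}$ (the small-value row) have been processed. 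The resulting sequence of bumps will be shown to mirror classical RSK insertion of $\rw(T)$, so the recording tableau $\fQ$ is determined combinatorially by $\RSK_P(\rw(T))$ together with positional data that is identical for $T$ and $S$.

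The main obstacle will be the precise combinatorial identification in the final step: establishing that sign insertion on $w_{T,N}$ in the two-row asymptotic regime faithfully reproduces classical RSK insertion of $\rw(T)$, and that $\sgn_{\fQ}$ is insensitive to any transformation of $\rw(T)$ preserving its RSK $P$-tableau. One must carefully handle the cyclic structure of $\extSn$ --- in particular the role of right-multiplication by $\shift$ inside $\digamma$ after an element is bumped --- and the interaction between elements bumped from $\fP$ and the still-unprocessed tail of the window notation. As a fallback, should this direct comparison become delicate, one can instead exhibit a sequence of right star operations, proper partial rotations, and multiplications by $\shift$ connecting $w_{T,N}$ to $w_{S,N}$, invoking Lemma \ref{lem:signleft}, Lemma \ref{lem:sgnqpre}, and Theorem \ref{thm:ppr} to preserve the left cell at every intermediate step.
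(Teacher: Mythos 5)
Your reduction steps match the paper's: both arguments first note that $w_{S,N}$ and $w_{T,N}$ share a two-sided cell (via the Shi poset, whose Greene--Kleitman partition is the rearrangement of $\sh(T)$), invoke Theorem \ref{thm:main} to reduce to showing $\sgn_\fQ(w_{S,N})=\sgn_\fQ(w_{T,N})$, and then use Lemma \ref{lem:asym} (applied once directly and once after $\rot$) to peel off the rows on which $R_t$ acts trivially, collapsing everything to the case of a two-row tabloid.

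The problem is the two-row base case, which is exactly where you stop proving and start asserting. Your ``central claim'' --- that for $N\gg 0$ the recording word $\sgn_\fQ(w_{T,N})$ depends only on $\RSK_P(\rw(T))$ --- is, for two-row tabloids, logically equivalent to the statement of the lemma itself: two two-row tabloids have the same $\RSK_P$ precisely when they are related by $R_1$. So nothing has been reduced; the claim still has to be proved, and the sketch offered (``sign insertion mirrors classical RSK insertion of $\rw(T)$'') runs into precisely the difficulty you flag yourself: bumped entries re-enter the window shifted by $n$ and are inserted again, so the process is not classical RSK on the finite word $\rw(T)$, and it is not clear a priori that $\fQ$ is insensitive to replacing $\rw(T)$ by $\rw(S)$. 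The paper avoids this entirely by a different device: it observes that $w_{S,N}\in\lc_S$ and $\rot(w_{T,N})\in\lc_{T^c}$ (where $T^c$ is the complemented, row-swapped tabloid), and then cites \cite[Theorem 3.18]{cfkly}, which identifies the affine evacuation of $T^c$ with $R_1(T)=S$; since $\rot$ composed with inversion intertwines left cells with evacuation, $w_{T,N}\in\lc_S$ as well, and the two-row case is done without any analysis of the insertion dynamics. Without this (or an actual proof of your bumping claim, or the explicit chain of star operations and partial rotations you mention only as a fallback), the argument is incomplete at its crux.
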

\begin{proof} We first claim that $w_{S,N}$ and $w_{T,N}$ are contained in the same two-sided cell parametrized by the rearrangement of $\sh(S)$. By the construction of $w_{S,N}$, for $i, j \in [1,n]$ we have $i<j$ in the Shi poset of ${w_{S,N}}$ if and only if $j$ is in the upper row of $S$ than $i$. In this case, it is easy to show that  the corresponding Greene-Kleitman partition is given by the rearrangement of $\sh(S)$ as desired. Since the same result holds for $w_{T,N}$, we see that $w_{S,N}$ and $w_{T,N}$ are in the same two-sided cell.

Thus, by Theorem \ref{thm:main} it suffices to show that $\sgn_\fQ(w_{S,N})=\sgn_{\fQ}(w_{T,N})$. Since $S_i=T_i$ for $1\leq i<t$, from Lemma \ref{lem:asym} it suffices to assume that $t=1$. Moreover, if we again apply Lemma \ref{lem:asym} to $\rot(w_{S,N})$ and $\rot(w_{T,N})$, respectively, then indeed it suffices to assume that $S$ and $T$ are two-row tabloids.

Without loss of generality we may assume that $S_1$ is longer than $S_2$. Then $w_{S,N} \in \lc_S$ as before. It is also clear that $\rot(w_{T,N}) \in \lc_{T^c}$ where $T^c$ is the tableau satisfying $T^c_1 = \{n+1-x \mid x \in T_2\}$ and $T^c_2 = \{n+1-x \mid x \in T_1\}$ as sets. By \cite[Theorem 3.18]{cfkly}, it follows that $w_{T,N}$ is contained in the cell parametrized by $R_1(T)=S$ (i.e. the image of $T^c$ under the affine evacuation) as desired.
\end{proof}

The lemma below is another ingredient for the proof of Theorem \ref{thm:lsstd}.
\begin{lem}\label{lem:f1} Let $S =(S_1, \ldots, S_l) \in \RSYT(\alpha)$ where $\alpha=(\alpha_1, \ldots, \alpha_l)$ is a composition of $n$ of length $l$, $S_l=(x_1, \ldots, x_{a_l}),$ and $\alpha_{l-1}=0$. Let $T$ be a row-standard Young tabloid where $T_i=S_i$ for $i\not\in \{l-1,l\}$, $T_{l-1}=(x_{a_l})$, and $T_l=(x_1, \ldots, x_{a_l-1})$. If $N \gg 0$ then we have $\sgn_\fQ(w_{S, N})=\sgn_\fQ(w_{T,N})$.
\end{lem}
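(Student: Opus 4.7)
The plan is to apply Lemma~\ref{lem:asym} with $v = w_{S,N}$, $w = w_{T,N}$, and $X = \bigsqcup_{i=1}^{l-2} S_i$, so that $[1,n]-X = S_l$ (using the hypothesis $S_{l-1}=\emptyset$). As a preliminary I would compare the two affine permutations directly from their defining formulas: for each $i \leq l-2$ the prefix sum $L_i=\sum_{j=i+1}^{l}\alpha_j$ is the same whether computed from $\sh(S)$ or $\sh(T)$, because $\alpha_{l-1}+\alpha_l$ is preserved; consequently $w_{S,N}$ and $w_{T,N}$ assign identical values at every position in $X$, and they also agree at $\{x_1,\dots,x_{\alpha_l-1}\}$, each getting the value $j$ at $x_j$. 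The only discrepancy is at $x_{\alpha_l}$, where $w_{S,N}(x_{\alpha_l}) = \alpha_l$ (contributed by row $l$ of $S$) while $w_{T,N}(x_{\alpha_l}) = Nn + \alpha_l$ (contributed by the singleton row $l-1$ of $T$).

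With this in hand, conditions (a) and (b) of Lemma~\ref{lem:asym} are easily verified. Condition (a) is immediate because $v$ and $w$ coincide on $X$. For (b), any $x \in X$ lies in some row $i \leq l-2$ and thus receives a value at least $Nn(l-i) + 1 \geq 2Nn + 1$, while the values at $y \in [1,n]-X = S_l$ are bounded above by $\alpha_l$ in $v$ and by $Nn + \alpha_l$ in $w$; taking $N \gg 0$ makes the required inequalities $v(y)+n^2 < v(x)$ and $w(y)+n^2<w(x)$ automatic.

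The substantive step is condition (c), namely $\sgn_\fQ(v\su{S_l}) = \sgn_\fQ(w\su{S_l})$. At the positions $x_1 < x_2 < \cdots < x_{\alpha_l}$ the two restricted partial permutations carry the window-order value sequences $(1,2,\dots,\alpha_l)$ and $(1,2,\dots,\alpha_l-1,\,Nn+\alpha_l)$ respectively, and both are strictly increasing. Hence the sign insertion on either restriction performs $\alpha_l$ consecutive appends and never bumps, so the two runs trivially produce the same $\fQ$ tableau (with the convention for $\sgn_\fQ$ on partial permutations being the one already used in Section~\ref{sec:sameleft}). Lemma~\ref{lem:asym} then yields $\sgn_\fQ(w_{S,N}) = \sgn_\fQ(w_{T,N})$, completing the proof.

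The only delicate point is giving precise meaning to condition (c) for partial permutations, but Section~\ref{sec:sameleft} already applies Lemma~\ref{lem:asym} in exactly this manner, so no new convention is needed. Once that is fixed, the entire argument reduces to the observation that a strictly increasing window is processed by the sign insertion without any bumping, which makes the verification of (c) a direct inspection.
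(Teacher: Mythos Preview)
Your proof is correct and takes a more direct route than the paper's. Both arguments ultimately rest on Lemma~\ref{lem:asym} with $X=[1,n]\setminus S_l$, and both verify condition~(c) via the trivial observation that a strictly increasing window produces only appends. The difference is that the paper inserts two extra steps: it introduces an auxiliary tabloid $U$ with $U_{l-1}=(x_2,\ldots,x_{\alpha_l})$ and $U_l=(x_1)$, invokes Lemma~\ref{lem:Rmat} (the combinatorial $R$-matrix) to obtain $\sgn_\fQ(w_{T,N})=\sgn_\fQ(w_{U,N})$, applies Lemma~\ref{lem:asym} to pass from $w_{S,N}$ to the uniformly shifted permutation $w_{S,N}+Nn\sum_{i}\vv{e}_{x_i}$, and finally argues ad hoc that lowering the globally smallest entry (from $Nn+1$ down to $1$ at position $x_1$) leaves $\sgn_\fQ$ unchanged. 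Your single application of Lemma~\ref{lem:asym} directly to the pair $(w_{S,N},w_{T,N})$ bypasses all of this: the two permutations already agree on $X$, satisfy the gap condition~(b) for $N$ large, and have strictly increasing restrictions to $S_l$ of the same length, so condition~(c) is immediate. The paper's detour has the incidental virtue of exercising Lemma~\ref{lem:Rmat}, but for the present lemma your argument is the cleaner one.
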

\begin{proof}
We set $U=(U_1, \ldots, U_l)$ to be the row-standard Young tabloid such that $U_i=T_i=S_i$ for $i\not\in \{l-1,l\}$, $U_{l-1}=(x_2, \ldots, x_{a_l})$, and $U_l=(x_1)$. Note that $R_{l-1}(U) = T$ which means that $w_{T,N}$ and $w_{U,N}$ have an identical image under $\sgn_\fQ$ by Theorem \ref{lem:Rmat}. Direct calculation shows that
\[w_{U,N} = w_{S,N}+\sum_{i=2}^{a_l}Nn\vv{e}_{i}=w_{S,N}+\sum_{i=1}^{a_l}Nn\vv{e}_{i}-Nn\vv{e}_1\]
where $\vv{e}_i$ is the standard vector with 1 on the $i$-th coordinate. 

It follows from Lemma \ref{lem:asym} that $\sgn_{\fQ}(w_{S,N})=\sgn_{\fQ}(w_{S,N}+\sum_{i=1}^{a_l}Nn\vv{e}_{i})$ if we set $X=[1,n]-S_l$. Now the element $w_{U,N}$ is obtained from $w_{S,N}+\sum_{i=1}^{a_l}Nn\vv{e}_{i}$ by replacing $1+Nn$ on the first coordinate with $1$. However, as $1+Nn$ was already the smallest element of $w_{S,N}+\sum_{i=1}^{a_l}Nn\vv{e}_{i}$, lowering this entry does not affect the sign insertion process as it is inserted at the first step and not bumped by any other elements, which implies that $\sgn_\fQ(w_{U,N}) = \sgn_\fQ(w_{S,N}+\sum_{i=1}^{a_l}Nn\vv{e}_{i})$. Thus the result follows.
\end{proof}

Now we are ready to prove Theorem \ref{thm:lsstd}.
\begin{proof}[Proof of Theorem \ref{thm:lsstd}]
Suppose that $T \in \Upsilon_\fQ \cap \RSYT(\lambda)$ for some $\lambda \vdash n$ and assume that $(P,Q) = (\RSK_P(T), \RSK_Q(T))$. We regard $\lambda$ as a composition of length $n$ (with $n-l(\lambda)$ zeroes at the end) and let $Q' = \theta_{\lambda^{rev}}^{(1^n)}(Q)$. Then there exists a path $\lambda = \alpha_0, \alpha_1, \ldots, \alpha_k =(1^n)$ where each $\alpha_i$ is a composition of $n$ and $\alpha_i \mapsto \alpha_{i+1}$ is either rearrangement of parts or of the form $(0,\ldots, 0, a, 0, \ldots) \mapsto (0,\ldots, 0, 1,  a-1,\ldots)$. (Here we allow rearrangement which permutes nonzero parts with zero parts but this does not cause any problem.) Then we may write $Q' = (\theta_{\alpha_{k-1}^{rev}}^{(1^n)} \circ \theta_{\alpha_{k-2}^{rev}}^{\alpha_{k-1}^{rev}} \circ \cdots \circ \theta_{\alpha_{1}^{rev}}^{\alpha_{2}^{rev}}\circ \theta_{\lambda^{rev}}^{\alpha_{1}^{rev}})(Q)$.

Let us write $T=T_0, T_1, \ldots, T_k$ where $(\RSK_P(T_i), \RSK_Q(T_i))=(P, (\theta_{\alpha_{i-1}^{rev}}^{\alpha_{i}^{rev}} \circ \cdots \circ \theta_{\alpha_{1}^{rev}}^{\alpha_{2}^{rev}}\circ \theta_{\lambda^{rev}}^{\alpha_{1}^{rev}})(Q))$. We claim that $T_k \in \Upsilon_\fQ$ by successively showing $\sgn_{\fQ}(w_{T_i,N}) = \fQ$ for $N \gg0$. The $i=0$ case is trivial. Now if $\alpha_{i+1}=\sigma \cdot \alpha_i$ for some $\sigma \in \Sym_n$, then as we observed above we have $T_{i+1} = w_0\sigma w_0\cdot T_i$ where $w_0$ is the longest element of $\Sym_n$, thus the claim follows from Lemma \ref{lem:Rmat}. In the other case, first note that the shapes of $T_{i+1}$ and $T_i$ are $(\ldots,1,a-1, 0, \ldots,0)$ and $(\ldots,0,a, 0, \ldots,0)$ for some $a>0$, respectively. Now direct calculation shows that $T_{i+1}$ is obtained from $T_i$ by the similar process to Lemma \ref{lem:f1}, i.e. changing $T_i=(\ldots, \emptyset, (x_1, \ldots, x_k), \emptyset, \ldots, \emptyset)$ to $T_{i+1}=(\ldots,(x_k) , (x_1, \ldots, x_{k-1}), \emptyset, \ldots, \emptyset)$. Thus, by Lemma \ref{lem:f1} the claim also follows.

As a result, for any $T \in \Upsilon_\fQ$ such that $(P,Q) = (\RSK_P(T), \RSK_Q(T))$ there exists $T' \in \Upsilon_\fQ \cap \RSYT(1^n)$ such that $(P, \theta_{\sh(T)^{rev}}^{(1^n)}(Q)) = (\RSK_P(T'), \RSK_Q(T'))$. Now suppose that $S \in \Upsilon_\fQ$ and set $S' \in \Upsilon_\fQ \cap \RSYT(1^n)$ similarly. As $\Upsilon_\fQ \cap \RSYT(1^n)$ is a singleton by Theorem \ref{thm:main}, we should have $S'=T'$ which implies that $\RSK_P(T) = \RSK_P(T') = \RSK_P(S') = \RSK_P(S)$ and $\theta_{\sh(T)^{rev}}^{(1^n)}(\RSK_Q(T)) =\RSK_Q(T')=\RSK_Q(S')= \theta_{\sh(S)^{rev}}^{(1^n)}(\RSK_Q(S))$. Now if $\sh(S)\geq \sh(T)$ with respect to dominance order then $(\theta_{\sh(T)^{rev}}^{(1^n)}\circ\theta_{\sh(S)^{rev}}^{\sh(T)^{rev}})(\RSK_Q(S))=\theta_{\sh(S)^{rev}}^{(1^n)}(\RSK_Q(S))=\theta_{\sh(T)^{rev}}^{(1^n)}(\RSK_Q(T))$ which implies that $\theta_{\sh(S)^{rev}}^{\sh(T)^{rev}}(\RSK_Q(S))=\RSK_Q(T)$ as $\theta_{\sh(S)^{rev}}^{\sh(T)^{rev}}$ is an injection (it is an embedding of a poset).

Finally, suppose that we are given $T \in \Upsilon_\fQ$ such that $(P,Q) = (\RSK_P(T), \RSK_Q(T))$ and $\sh(T) \geq \lambda$ for some partition $\lambda\vdash n$. Let $S \in \RSYT(n, \lambda^{rev})$ be such that $(\RSK_P(S), \RSK_Q(S)) = (P, \theta_{\sh(T)^{rev}}^{\lambda^{rev}}(Q))$ and suppose that $S \in \Upsilon_{\tilde{\fQ}}$ for some $\tilde{\fQ}$. We set $T' \in \Upsilon_\fQ \cap \RSYT(1^n)$ and $S' \in \Upsilon_{\tilde{\fQ}} \cap \RSYT(1^n)$ as above. Then $\RSK_P(T') = \RSK_P(T)=\RSK_P(S)=\RSK_P(S')$ and $\RSK_Q(T') = \theta_{\sh(T)^{rev}}^{(1^n)}(\RSK_Q(T)) = (\theta_{\lambda^{rev}}^{(1^n)}\circ\theta_{\sh(T)^{rev}}^{\lambda^{rev}})(\RSK_Q(T)) = \theta_{\lambda^{rev}}^{(1^n)}(\RSK_Q(S)) = \RSK_Q(S')$. Since the RSK algorithm is injective it follows that $S'=T'$ which implies that $\fQ=\tilde{\fQ}$. Thus we have $S \in \Upsilon_\fQ$. It suffices for the proof.
\end{proof}

\bibliographystyle{amsalpha}
\bibliography{blasiak}

\newcommand{\etalchar}[1]{$^{#1}$}
\providecommand{\bysame}{\leavevmode\hbox to3em{\hrulefill}\thinspace}
\providecommand{\MR}{\relax\ifhmode\unskip\space\fi MR }
\providecommand{\MRhref}[2]{%
  \href{http://www.ams.org/mathscinet-getitem?mr=#1}{#2}
}
\providecommand{\href}[2]{#2}
\begin{thebibliography}{CFK{\etalchar{+}}18}

\bibitem[Ari00]{ari00}
Susumu Ariki, \emph{Robinson-{S}chensted correspondence and left cells},
  Combinatorial {M}ethods in {R}epresentation {T}heory, Adv. Stud. Pure Math.,
  vol.~28, American {M}athematical {S}ociety, 2000, pp.~1--20.

\bibitem[Bla11]{bla11}
Jonah Blasiak, \emph{Cyclage, catabolism, and the affine {H}ecke algebra}, Adv.
  Math. \textbf{228} (2011), 2292--2351.

\bibitem[BV82]{bv82}
Dan Barbasch and David Vogan, \emph{Primitive ideals and orbital integrals in
  complex classical groups}, Math. Ann. \textbf{259} (1982), no.~2, 153--199.

\bibitem[CFK{\etalchar{+}}18]{cfkly}
Michael Chmutov, Gabriel Frieden, Dongkwan Kim, Joel~Brewster Lewis, and Elena
  Yudovina, \emph{An affine generalization of evacuation}, Available at
  \url{https://arxiv.org/abs/1806.07429} (2018).

\bibitem[CLP17]{clp17}
Michael Chmutov, Joel~Brewster Lewis, and Pavlo Pylyavskyy, \emph{Monodromy in
  {K}azhdan-{L}usztig cells in affine type ${A}$}, Available at
  \url{https://arxiv.org/abs/1706.00471} (2017).

\bibitem[CPY18]{cpy18}
Michael Chmutov, Pavlo Pylyavskyy, and Elena Yudovina, \emph{Matrix-ball
  construction of affine {R}obinson-{S}chensted correspondence}, Selecta Math.
  (N. S.) \textbf{24} (2018), no.~2, 667--750.

\bibitem[GM88]{gm88}
Adriano~M. Garsia and Timothy~J. McLarnan, \emph{Relations between {Y}oung's
  natural and the {K}azhdan-{L}usztig representations of ${S}_n$}, Adv. Math.
  \textbf{69} (1988), no.~1, 32--92.

\bibitem[KL79]{kl79}
David Kazhdan and George Lusztig, \emph{Representations of {C}oxeter groups and
  {H}ecke algebras}, Invent. Math. \textbf{53} (1979), no.~2, 165--184.

\bibitem[KP19]{kp19}
Dongkwan Kim and Pavlo Pylyavskyy, \emph{Asymptotic {H}ecke algebras and
  {L}usztig-{V}ogan bijection via affine matrix-ball construction}, Available
  at \url{https://arxiv.org/abs/1902.06668} (2019).

\bibitem[KP20]{kp20}
\bysame, \emph{Two-row ${W}$-graphs in affine type ${A}$}, Adv. Math.
  \textbf{370} (2020).

\bibitem[Las91]{las91}
Alain Lascoux, \emph{Cyclic permutations on words, tableaux and harmonic
  polynomials}, Proceedings of the Hyderabad Conference on Algebraic Groups,
  1991, pp.~323--347.

\bibitem[LS81]{ls81}
Alain Lascoux and Marcel~P. Sch{\"u}tzenberger, \emph{Le mono{\"\i}de
  plaxique}, Quaderni de `La ricerca scientifica' \textbf{109} (1981),
  129--156.

\bibitem[Lus89]{lus89:cell}
George Lusztig, \emph{Cells in affine {W}eyl groups, {I}{V}}, J. Fac. Sci.
  Univ. Tokyo \textbf{36} (1989), 297--328.

\bibitem[Shi91]{shi91}
Jian-Yi Shi, \emph{The generalized {R}obinson-{S}chensted algorithm on the
  affine {W}eyl group of type $\tilde{{A}}_{n-1}$}, J. Algebra \textbf{139}
  (1991), 364--394.

\bibitem[Shi05]{shi05}
Mark Shimozono, \emph{Crystals for dummies}, Available at
  \url{https://www.aimath.org/WWN/kostka/crysdumb.pdf} (2005).

\bibitem[Sta86]{sta86}
Richard~Peter Stanley, \emph{Enumerative {C}ombinatorics}, vol.~2, Cambridge
  {S}tudies in {A}dvanced {M}athematics, no.~62, Cambridge {U}niversity
  {P}ress, 1986.

\bibitem[SW00]{sw00}
Mark Shimozono and Jerzy Weyman, \emph{Graded characters of modules supported
  in the closure of a nilpotent conjugacy class}, European J. Combin.
  \textbf{21} (2000), 257--288.

\end{thebibliography}

\end{document}